\documentclass[reqno,12pt]{amsart}
\usepackage{amsmath,amssymb,amsthm,graphicx,mathrsfs,url,bbm}
\usepackage{wrapfig}
\usepackage{enumitem}
\usepackage{mathtools}
\usepackage[utf8]{inputenc}
 \usepackage[T1]{fontenc}
\usepackage[usenames,dvipsnames]{color}
\usepackage[colorlinks=true,linkcolor=Red,citecolor=Green]{hyperref}
\usepackage[super]{nth}
\usepackage[open, openlevel=2, depth=3, atend]{bookmark}
\hypersetup{pdfstartview=XYZ}
\usepackage[font=footnotesize]{caption}
\usepackage{a4wide}
\usepackage[stretch=10]{microtype}


\usepackage{epstopdf}

\usepackage{hyperref}

\theoremstyle{plain}
\newtheorem{theorem}{Theorem}[section]

\newtheorem{lemma}[theorem]{Lemma}
\newtheorem{proposition}[theorem]{Proposition}

\theoremstyle{definition}

\theoremstyle{remark}
\newtheorem{remark}[theorem]{Remark}


\numberwithin{equation}{section}

\newcommand{\R}{\mathbb{R}}

\newcommand{\Z}{\mathbb{Z}}

\newcommand{\M}{\mathcal{M}}
\newcommand{\N}{\mathbb{N}}

\newcommand{\V}{\mathbb{V}}

\newcommand{\Ss}{\mathbb{S}}
\newcommand{\eps}{\varepsilon}
\newcommand{\ke}{\text{ker }}

\newcommand{\mc}{\mathcal}

\DeclareMathOperator{\vol}{vol}

\DeclareMathOperator{\Tr}{Tr}
\DeclareMathOperator{\Op}{Op}

\DeclareMathOperator{\ran}{ran}

\DeclareMathOperator{\Div}{div}

\DeclareMathOperator{\sol}{sol}

\DeclarePairedDelimiter{\abs}{\lvert}{\rvert}
\DeclarePairedDelimiter{\norm}{\lVert}{\rVert}

\newcommand{\be}{\begin{equation}}
\newcommand{\ee}{\end{equation}}





\title
[X-ray transform on Anosov manifolds]
{Classical and microlocal analysis of the X-ray transform on Anosov manifolds}


\author{Sébastien Gouëzel}
\address{Laboratoire Jean Leray, CNRS UMR 6629,
Université de Nantes, 2 rue de la
Houssinière,
44322 Nantes, France}
\email{sebastien.gouezel@univ-nantes.fr}

\author{Thibault Lefeuvre}

\address{Laboratoire de Mathématiques d’Orsay, Univ. Paris-Sud, CNRS, Université Paris-Saclay, 91405 Orsay, France}


\email{thibault.lefeuvre@u-psud.fr}







\begin{document}

\begin{abstract}
We complete the
microlocal study of the geodesic X-ray transform on Riemannian manifolds
with Anosov geodesic flow initiated by Guillarmou in \cite{Guillarmou-17-1}
and pursued by Guillarmou and the second author in \cite{Guillarmou-Lefeuvre-18}.
We prove new stability estimates and clarify some properties of the
operator $\Pi_m$ — the generalized X-ray transform. These estimates rely on a 
refined version of the Livsic theorem for Anosov flows, especially on a new quantitative finite time Livsic
theorem.
\end{abstract}

\maketitle

\section{Introduction}

Let $\mc{M}$ be a smooth closed $(n+1)$-dimensional manifold endowed with a
vector field $X$ generating a complete flow $(\varphi_t)_{t \in \R}$. We
assume that the flow $(\varphi_t)_{t \in \R}$ is transitive and
Anosov in the sense that there exists a continuous flow-invariant splitting
\be \label{eq:split} T_x(\mc{M}) = \R X(x) \oplus E_u(x) \oplus E_s(x), \ee
where $E_s(x)$ (resp.\ $E_u(x)$) is the \textit{stable} (resp.\
\textit{unstable}) vector space at $x \in \mc{M}$, and a smooth Riemannian
metric $g$ such that
\be \label{equation:hyperbolicite} \begin{array}{c} |d\varphi_t(x) \cdot v|_{\varphi_t(x)} \leq C e^{-\lambda t} |v|_{x}, ~~ \forall t > 0, v \in E_s(x) \\
|d\varphi_t(x) \cdot v|_{\varphi_t(x)} \leq C e^{-\lambda |t|} |v|_{x}, ~~
\forall t < 0, v \in E_u(x),\end{array} \ee for some uniform constants
$C,\lambda > 0$. The norm, here, is $|\cdot|_x \coloneqq
g_x(\cdot,\cdot)^{1/2}$. The dimension of $E_s$ (resp.\ $E_u$) is denoted
by $n_s$ (resp.\ $n_u$). As a consequence, $n+1=1+n_s+n_u$ (where the $1$
stands for the \textit{neutral} direction, that is the direction of the
flow). The case we will have in mind will be that of a geodesic flow on the
unit tangent bundle of a smooth Riemannian manifold
$(M,g)$ with negative sectional curvature.

\subsection{X-ray transform on \texorpdfstring{$\M$}{M}}

We denote by $\mc{G}$ the set of closed orbits of the flow and for $f \in
C^0(\M)$, its X-ray transform $If$ is defined by:
\[ \mc{G} \ni \gamma \mapsto If(\gamma) \coloneqq \langle \delta_{\gamma} , f \rangle = \dfrac{1}{\ell(\gamma)} \int_0^{\ell(\gamma)} f(\varphi_t x) dt, \]
where $x \in \gamma$, $\ell(\gamma)$ is the length of $\gamma$.

The Livsic theorem characterizes the kernel of the X-ray transform for a
hyperbolic flow: the latter is reduced to the coboundaries, which are the
functions of the form $f=Xu$, where $u$ is a function defined on $\mc{M}$
whose regularity is prescribed by that of $f$. This result was initially
proved by Livsic \cite{Livsic-72} in Hölder regularity: if $f \in
C^\alpha(\M)$ is such that $If = 0$, then there exists $u \in
C^\alpha(\M)$, differentiable in the flow direction, such that $f = Xu$ and
$u$ is unique up to an additive constant. There is also a version of the
Livsic theorem in smooth regularity due to De la Llave-Marco-Moriyon
\cite{DeLaLlave-Marco-Moryon-86}. Much more recently, Guillarmou
\cite[Corollary 2.8]{Guillarmou-17-1} proved a version of the Livsic
theorem in Sobolev regularity which implies the theorem of
\cite{DeLaLlave-Marco-Moryon-86}.

It is also rather natural to expect other versions of the Livsic theorem to
hold. For instance, if we modify the condition $If = 0$ by $If \geq 0$, is
it true that $f \geq Xu$, for some well-chosen function $u$ (positive
Livsic theorem)? And if $\|If\|_{\ell^\infty} \coloneqq \sup_{\gamma \in
\mc{G}} |If(\gamma)| \leq \eps$, can one write $f = Xu + h$, where some
norm of $h$ is controlled by a power of $\eps$ (approximate Livsic
theorem)? Eventually, what can be said if $If(\gamma)=0$ for all closed
orbits $\gamma$ of length $\leq L$ (finite Livsic theorem)?

The positive Livsic theorem for Anosov flows was proved by Lopes-Thieullen
\cite{Lopes-Thieullen-05} with an explicit control of a Hölder norm of the coboundary $Xu$ in terms of a
norm of $f$.

\begin{theorem}[Lopes-Thieullen]
\label{theorem:livsic-positif} Let $0 < \alpha \leq 1$. There exist $0 <
\beta \leq \alpha$, $C > 0$ such that: for all functions $f \in
C^\alpha(\mc{M})$, there exist $u \in C^\beta(\mc{M})$, differentiable in the
flow-direction with $Xu \in C^\beta(\mc{M})$ and $h \in C^\beta(\mc{M})$,
such that $f = Xu + h + m(f)$, with $h \geq 0$ and $m(f)=\inf_{\gamma \in
\mc{G}} If(\gamma)$. Moreover, $\|Xu\|_{C^\beta} \leq C\|f\|_{C^\alpha}$.
\end{theorem}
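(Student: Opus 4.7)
The plan is to follow the Bousch/Lopes--Thieullen paradigm and construct an explicit \emph{sub-action}, i.e.\ a function $u$ satisfying $Xu \leq f$ after the normalisation $m(f) = 0$. Subtracting the constant $m(f)$ from $f$, the hypothesis becomes $If(\gamma) \geq 0$ for every $\gamma \in \mc{G}$, and the task reduces to producing $u : \mc{M} \to \R$, differentiable along $X$ with $Xu \in C^\beta$, such that $Xu \leq f$ pointwise; the residual $h \coloneqq f - Xu - m(f)$ is then non-negative and of class $C^\beta$ by construction.

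For an invertible flow, the backward orbit of any point is a single curve, so the naive formula $\inf_T \int_{-T}^0 f(\varphi_s x)\, ds$ is not bounded below. To recover a rich family of pre-orbits over which to optimise, I would pass to a symbolic model of Ratner/Bowen type: a Markov section converts the flow into a suspension over a subshift of finite type $\sigma : \Sigma \to \Sigma$, where $\sigma$ is non-invertible and has multiple pre-images. Writing $\tilde f(y) \coloneqq \int_0^{r(y)} f(\varphi_s y)\, ds$ for the integral of $f$ over the return time $r$ to the section, I define on $\Sigma$ the sub-action
\[
u(x) \coloneqq \inf_{n \geq 0,\ y \in \sigma^{-n}(x)} \sum_{k=0}^{n-1} \tilde f(\sigma^k y),
\]
which satisfies $u \circ \sigma - u \leq \tilde f$ by directly comparing pre-orbits of $\sigma x$ with those of $x$. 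Its boundedness by $C\|f\|_{C^\alpha}$ follows from the Anosov closing lemma: any long pre-orbit of $x$ can be closed up and shadowed by a genuine closed orbit $\gamma$, and the Hölder regularity of $f$ bounds the discrepancy between the Birkhoff sum along the pre-orbit and $\ell(\gamma) If(\gamma) \geq 0$ by a geometric series in the exponentially small shadowing distances.

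The Hölder estimate $\|Xu\|_{C^\beta} \leq C \|f\|_{C^\alpha}$ would then be obtained from the local product structure: given nearby points $x, x'$, one transports an almost-optimal pre-orbit of $x$ to a pre-orbit of $x'$ along the stable/unstable holonomies, and the exponential contraction of $d\varphi_t$ on $E_s, E_u$ encoded in \eqref{equation:hyperbolicite} again sums to $C \|f\|_{C^\alpha} d(x,x')^\beta$ for some $\beta \in (0, \alpha]$ depending on $\alpha$ and $\lambda$. The sub-action constructed on $\Sigma$ is then lifted to a neighbourhood of the Markov section by the flow and glued across sections, producing the sub-action on $\mc{M}$.

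The main obstacle is the interplay between the symbolic construction and the geometric regularity. Markov sections are only Hölder-regular and have boundaries, so lifting $u$ from $\Sigma$ to $\mc{M}$ degrades regularity and must be handled carefully. More fundamentally, $u$ is naturally only Lipschitz in the flow direction (with constant $\|f\|_\infty$) and Hölder transversally, so proving that the derivative $Xu$ itself is $C^\beta$ requires differentiating the defining infimum and running the closing argument simultaneously along the stable and unstable foliations. The arithmetic balance between $\alpha$, the hyperbolicity rate $\lambda$, and the loss at Markov boundaries is what forces $\beta$ to be strictly smaller than $\alpha$ in general.
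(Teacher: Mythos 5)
The paper does not prove this theorem; it is cited from \cite{Lopes-Thieullen-05} and used as a black box, so there is no in-paper proof to compare against. Your sketch correctly reproduces the sub-action (Mañé--Conze--Guivarc'h / Bousch) strategy behind the original result, and you correctly identify the genuine technical obstacles---the Hölder-only regularity at Markov section boundaries, the lifting of $u$ from the section to $\mc{M}$, and the need to establish regularity of $Xu$ rather than merely of $u$---though your last paragraph lists these rather than resolving them, so this remains a faithful programme rather than a complete argument.
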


In this article, we prove a finite approximate version of the Livsic theorem, as follows.

\begin{theorem}
\label{theorem:livsic-approche} Let $0 < \alpha \leq 1$. There exist $0 <
\beta \leq \alpha$ and $\tau, C>0$ with the following property. Let $\eps
> 0$. Consider a function $f\in C^\alpha(\mc{M})$ with $\|f\|_{C^\alpha(\mc{M})} \leq 1$
such that $\abs{If(\gamma)} \leq \eps$ for all $\gamma$ with $\ell(\gamma)
\leq \eps^{-1/2}$. Then there exist $u \in C^\beta(\mc{M})$ differentiable in
the flow-direction with $Xu \in C^\beta(\mc{M})$ and $h \in C^\beta(\mc{M})$,
such that $f = Xu + h$. Moreover, $\|u\|_{C^\beta} \leq C$ and
$\|h\|_{C^\beta} \leq C \eps^\tau$.
\end{theorem}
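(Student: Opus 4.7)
The plan is to combine a quantitative finite-time Livsic theorem with Theorem~\ref{theorem:livsic-positif} (Lopes--Thieullen), and then to use Hölder interpolation to upgrade a sup-norm bound on the residual to a $C^\beta$ bound. The first step is to upgrade the hypothesis from short orbits to all closed orbits: under $\|f\|_{C^\alpha}\leq 1$ and $|If(\gamma)|\leq\eps$ for $\ell(\gamma)\leq \eps^{-1/2}$, one wants to derive
\[ \sup_{\gamma\in\mc{G}} |If(\gamma)| \leq C\eps^{\tau_1} \]
for some $\tau_1>0$. The mechanism is an Anosov closing/shadowing argument: any long orbit $\gamma$ is cut into segments of length $\simeq\eps^{-1/2}$, each closed up by the exponential closing lemma to a short closed orbit $\gamma_j$, and the Hölder regularity of $f$ controls the cumulative shadowing error.

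\textbf{Application of positive Livsic and sup-norm bound on the residual.} Apply Theorem~\ref{theorem:livsic-positif} to $f$ to obtain $f = Xu + h_0 + m(f)$ with $h_0\geq 0$, $\|Xu\|_{C^{\beta_0}}\leq C$, and $u, h_0 \in C^{\beta_0}$. By the previous step $|m(f)|\leq C\eps^{\tau_1}$, and integrating the identity along any closed orbit gives $0\leq Ih_0(\gamma) = If(\gamma)-m(f)\leq 2C\eps^{\tau_1}$. I now bound $\|h_0\|_\infty$: if $h_0(x_0)=M$, the Hölder bound yields $h_0\geq M/2$ on a ball $B(x_0,r)$ with $r\simeq M^{1/\beta_0}$; by transitivity of the flow and a quantitative closing lemma there is a closed orbit $\gamma$ of length $L \lesssim r^{-d}$ that visits $B(x_0,r)$ for a time $\simeq r$, so
\[ \tfrac{M}{2}\cdot\tfrac{r}{L} \lesssim Ih_0(\gamma) \leq 2C\eps^{\tau_1}, \]
yielding $M \lesssim \eps^{\tau_2}$ for some $\tau_2>0$. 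Setting $h := h_0 + m(f)$ one has $f = Xu + h$ with $\|h\|_\infty \leq C\eps^{\min(\tau_1,\tau_2)}$ and $\|h\|_{C^{\beta_0}} \leq C$.

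\textbf{Interpolation and main obstacle.} Hölder interpolation $\|h\|_{C^\beta} \lesssim \|h\|_\infty^{1-\beta/\beta_0}\,\|h\|_{C^{\beta_0}}^{\beta/\beta_0}$ for any $\beta<\beta_0$ now delivers $\|h\|_{C^\beta}\leq C\eps^\tau$ for some $\tau>0$; the bound $\|u\|_{C^\beta}\leq C$ follows from $\|Xu\|_{C^{\beta_0}}\leq C$ by normalizing $u$ at a reference point and exploiting ergodicity of the flow to control $u$ uniformly from its derivative along orbits. The central difficulty is clearly the first step, i.e.\ the derivation of the uniform bound $\sup_\gamma|If(\gamma)|\leq C\eps^{\tau_1}$ from control on short orbits alone: the interplay between the orbit-length threshold $\eps^{-1/2}$ and the integral precision $\eps$ is delicate, as the shadowing errors accumulated when decomposing a long orbit of length $L$ into $\sim L\eps^{1/2}$ short pieces must remain subdominant compared to the main term; optimizing this trade-off is what dictates the exponent $\tau$ and appears to be the principal new technical ingredient of the paper.
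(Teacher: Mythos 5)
Your plan is a genuinely different route from the paper's. The paper proves the theorem directly by constructing a single well-chosen periodic orbit $\mc{O}(x_0)$ of length $\leq \eps^{-1/2}$ that is simultaneously $\eps^{\beta_d}$-dense and $\eps^{\beta_s}$-transversally separated (Lemma~\ref{lemma:bonne-orbite}), defining $\tilde u$ on it by integration, proving a Hölder bound on $\tilde u$ (the separation property is the essential input here), and then extending and cutting off. The global bound $\|If\|_{\ell^\infty}\leq C\eps^\tau$ is derived in the paper as a \emph{corollary} of the theorem, not the other way around. Your scheme inverts the logical order: establish the global $\ell^\infty$ bound first by a shadowing/cutting argument, then invoke Lopes--Thieullen, then upgrade the sup-norm control on the residual. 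This is an interesting alternative, and I believe it can be made to work, but two steps need substantially more care than you give them.

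First, your crucial step~1 is only sketched, and the sketch hides the real difficulty. You cannot simply ``cut $\gamma$ into segments of length $\simeq\eps^{-1/2}$ and close each up'': the closing lemma requires the endpoints of each segment to be within $\eps_0$ of each other, and equally spaced cut points give no such control. What is needed is a combinatorial decomposition: cover $\mc{M}$ by $K\asymp\delta^{-(n+1)}$ balls of radius $\delta/2$, color the integer times along $\gamma$ by ball index, and perform a greedy decomposition using the return time $\rho(t)$ of each color. The cyclic identity $\sum_t \rho(t)=K'L$ (with $K'\leq K$ the number of colors visited) and Markov's inequality control the number of ``bad'' starting times with $\rho(t)>T_{\max}$, and one must then account for the junk from bad steps, the shadowing error $O(\delta^\alpha)$ per piece, and the leftover terminal piece of length $\leq T_{\max}$ (which for $L$ only moderately larger than $\eps^{-1/2}$ forces you to choose $T_{\max}$ depending on $L$, not simply $T_{\max}\simeq\eps^{-1/2}$, so that $T_{\max}/L$ is still small). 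Only after optimizing over $\delta$ and $T_{\max}$ does one get a positive exponent $\tau_1$. None of this bookkeeping is in your proposal, and it is precisely the quantitative content that the paper encodes geometrically in the separation property of its constructed orbit.

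Second, the conclusion $\|u\|_{C^\beta}\leq C$ does not follow from the Lopes--Thieullen theorem as stated, which only bounds $\|Xu\|_{C^\beta}$. Your appeal to ``ergodicity to control $u$ from its derivative along orbits'' is not a proof: in general one cannot bound $\|u\|_{C^0}$ by $\|Xu\|_{C^0}$. What you actually need is a quantitative Livsic estimate for the coboundary $Xu=f-h_0-m(f)$, i.e.\ that the (normalized) primitive of a Hölder function whose integrals over all closed orbits vanish satisfies $\|u\|_{C^{\beta'}}\lesssim\|Xu\|_{C^\beta}$. Such an estimate is essentially implicit in the Hölder Livsic argument, but it must be stated and invoked; the paper instead produces $u$ by hand on a dense and transversally separated periodic orbit, where the separation lower-bounds $d(x,y)$ for distinct transversal points of the orbit and thereby yields the Hölder bound on $\tilde u$ directly from the hypothesis on short closed orbits. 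Aside from these two gaps, your steps~2--4 (Lopes--Thieullen, sup-norm bound on $h_0$ via a closing argument in a ball where $h_0$ is large, and Hölder interpolation) are correct in outline.
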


We note that a rather similar result had already been obtained by S. Katok \cite{Katok-90} in the particular case of a contact Anosov flow on a $3$-manifold.

The assumptions of Theorem \ref{theorem:livsic-approche} hold in particular if $\|If\|_{\ell^\infty} =
\sup_{\gamma \in \mc{G}} |If(\gamma)| \leq \eps$. Under the assumptions of
the theorem (only mentioning the closed orbits of length at most
$\eps^{-1/2}$), the decomposition $f = Xu+h$ also gives a global control on
$\|If\|_{\ell^\infty}$, of the form
\begin{equation}
\label{finite_livsic}
  \|If\|_{\ell^\infty} \leq C \eps^\tau.
\end{equation}
Indeed, if one integrates $f=Xu+h$ along a closed orbit of any length, the
contribution of $Xu$ vanishes and one is left with a bound $\norm{h}_{C^0}
\leq C \eps^\tau$. The bound~\eqref{finite_livsic} holds in particular if
$If(\gamma)=0$ for all $\gamma$ with $\ell(\gamma)\leq \eps^{-1/2}$. This
statement illustrates quantitatively the fact that the quantities
$If(\gamma)$ for different $\gamma$ are far from being independent.

\begin{remark}
In Theorem~\ref{theorem:livsic-approche},
the constants $\beta, C,\tau$ depend on the Anosov flow under consideration,
but in a locally uniform way: given an Anosov flow, one can find such
parameters that work for any flow in a neighborhood of the initial flow. The
local uniformity can be checked either directly from the proof, or using a
(Hölder-continuous) orbit-conjugacy between the initial flow and the
perturbed one.
\end{remark}

\begin{remark}
It could be interesting to extend the positive and the finite approximate Livsic theorems to other regularities like $H^s$ spaces for $s > \frac{n+1}{2}$ but we were unable to do so.
\end{remark}

\subsection{X-ray transform for the geodesic flow}

If $(M,g)$ is a smooth closed Riemannian manifold, we set $\mc{M} \coloneqq
SM$, the unit tangent bundle, and denote by $X$ the geodesic vector field
on $SM$. We will always assume that the geodesic flow is Anosov on $SM$ and
we say that $(M,g)$ is an \emph{Anosov Riemannian manifold}. It
is a well-known fact that a negatively-curved manifold has Anosov geodesic
flow. We will denote by $\mc{C}$ the set of free homotopy classes on $M$:
they are in one-to-one correspondence with the set of conjugacy classes of
$\pi_1(M,\bullet)$. If $(M,g)$ is Anosov, we know by \cite{Klingenberg-74}
that given a free homotopy class $c \in \mc{C}$, there exists a unique
closed geodesic $\gamma \in \mc{G}$ belonging to the free homotopy class
$c$. In other words, $\mc{G}$ and $\mc{C}$ are in one-to-one
correspondence. As a consequence, we will rather see the X-ray transform as
a map $I^g : C^0(SM) \rightarrow \ell^\infty(\mc{C})$ and we will drop the
index $g$ if the context is clear.

If $f \in C^\infty(M,\otimes^m_S T^*M)$ is a symmetric tensor, then by
\S\ref{appendix:tensors}, we can see $f$ as a function $\pi_m^*f \in
C^\infty(SM)$, where $\pi_m^*f(x,v) \coloneqq f_x(v,...,v)$. The X-ray
transform $I_m$ of $f$ is simply defined by $I_m f \coloneqq I \circ
\pi_m^*f$. In other words, it consists in integrating the tensor $f$ along
closed geodesics by plugging $m$-times the speed vector in $f$. This map $I_m$ may appear in different contexts. In particular,
$I_2$ is well-known to be the differential of the marked length spectrum and
it was studied in \cite{Guillarmou-Lefeuvre-18} to prove its rigidity, thus
partially answering the conjecture of Burns-Katok \cite{Burns-Katok-85}. 

The natural operator of derivation of symmetric tensors is $D \coloneqq
\sigma \circ \nabla$, where $\nabla$ is the Levi-Civita connection and
$\sigma$ is the operator of symmetrization of tensors (see
\S\ref{appendix:tensors}). Any smooth tensor $f \in C^\infty(M,\otimes^m_S
T^*M)$ can be uniquely decomposed as $f = Dp+h$, where $p \in
C^\infty(M,\otimes^{m-1}_S T^*M)$ and $h \in C^\infty(M,\otimes^m_S T^*M)$
is a \emph{solenoidal tensor} i.e., a tensor such that $D^*h = 0$, where
$D^*$ is the formal adjoint of $D$. We say that $Dp$ is the
\emph{potential part} of the tensor $f$. We will see that $I_m(Dp) = 0$. In
other words, the potential tensors are always in the kernel of the X-ray
transform. We will say that $I_m$ is \emph{solenoidal injective} or in
short s-injective if injective when restricted to
\[ C_{\sol}^\infty(M,\otimes^m_S T^*M) \coloneqq C^\infty(M,\otimes^m_S T^*M) \cap \ker(D^*) \]
Note that we will often add an index $\sol$ to a functional space on
tensors to denote the fact that we are considering the intersection with
$\ker D^*$.

It is conjectured that $I_m$ is s-injective for all Anosov Riemannian
manifolds, in any dimension and without any assumption on the curvature.
Under the additional assumption that the sectional curvatures are
non-positive, the \emph{Pestov energy identity} allows to show injectivity
(see \cite{Guillemin-Kazhdan-80} and \cite{Croke-Sharafutdinov-98} for the
original proofs). Without any assumption on the curvature, this is still
true for surfaces by \cite{Paternain-Salo-Uhlmann-14-2} and
\cite{Guillarmou-17-1}. In higher dimensions, it holds for $m=0,1$ (see
\cite{Dairbekov-Sharafutdinov-03}) but remains an open question for higher
order tensors without any assumption on the curvature. However, it is
already known that $C_{\sol}^\infty(M,\otimes^m_S T^*M) \cap \ker(I_m)$ is
finite-dimensional.

We will also prove a stability estimate on $I_m$. 

\begin{theorem}
\label{theorem:xray} Assume $I_m$ is s-injective. Then for all $0 < \beta < \alpha < 1$, there exists $\theta_1 \coloneqq\theta(\alpha,\beta) > 0$ and $C\coloneqq C(\alpha,\beta) > 0$ such that: if $f \in C_{\sol}^\alpha(M,\otimes^m_S T^*M)$ is a solenoidal symmetric $m$-tensor such that $\|f\|_{C^\alpha} \leq 1$, then $\|f\|_{C^\beta} \leq C \|I_mf\|_{\ell^\infty}^{\theta_1}$.
\end{theorem}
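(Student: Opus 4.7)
The plan is to reduce the tensorial stability estimate to the scalar Theorem \ref{theorem:livsic-approche} via the lift $\pi_m^*$, and then to convert the resulting scalar coboundary decomposition on $SM$ back to a tensor decomposition on $M$ using vertical Fourier calculus and the s-injectivity of $I_m$.

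Set $\eps := \|I_m f\|_{\ell^\infty}$. The case $\eps \geq \eps_0$ for a fixed small constant is trivial, since $\|f\|_{C^\beta} \leq \|f\|_{C^\alpha} \leq 1$ already implies the result for any $\theta_1$ small enough. So assume $\eps < \eps_0$. Since $\pi_m^* f \in C^\alpha(SM)$ satisfies $\|\pi_m^* f\|_{C^\alpha(SM)} \leq C \|f\|_{C^\alpha} \leq C$ and $|I(\pi_m^* f)(\gamma)| = |I_m f(\gamma)| \leq \eps$ for every closed orbit $\gamma$, Theorem \ref{theorem:livsic-approche} (applied to $\pi_m^* f / C$) yields a decomposition $\pi_m^* f = Xu + h$ on $SM$ with $\|u\|_{C^\gamma}, \|Xu\|_{C^\gamma} \leq C'$ and $\|h\|_{C^\gamma} \leq C' \eps^\tau$ for some $\gamma \in (0, \alpha)$.

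The heart of the argument is to extract a tensor equation on $M$ from this scalar decomposition. Decomposing $u = \sum_k u_k$ into vertical spherical harmonics and using the splitting $X = X_+ + X_-$, where $X_\pm$ raise/lower the vertical degree by $1$, the equation $Xu = \pi_m^* f - h$ couples consecutive modes. Since $\pi_m^* f$ has vertical modes only up to degree $m$, the high-mode portion of $u$ satisfies an elliptic system in $X_-$ and can be estimated in terms of $\|h\|_{C^\gamma}$ alone, via the Pestov-identity / Paternain--Salo--Uhlmann--Guillarmou machinery developed in \cite{Paternain-Salo-Uhlmann-14-2, Guillarmou-17-1, Guillarmou-Lefeuvre-18}. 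This allows us to replace $u$, modulo an error of size $O(\eps^\tau)$, by a low-mode truncation of the form $\pi_{m-1}^* p$, where $p$ is a symmetric $(m-1)$-tensor on $M$ with $\|p\|_{C^{\gamma'}} \leq C$ for some $\gamma' \in (0,\gamma)$. Using the intertwining $X \pi_{m-1}^* = \pi_m^* D$ and the continuous left inverse of $\pi_m^*$ on symmetric tensors (integration against fiberwise polynomials), we obtain $\|f - Dp\|_{C^{\gamma'}(M)} \leq C \eps^\tau$. The s-injectivity assumption enters here to control the finite-dimensional obstruction to the truncation procedure.

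Finally, apply the $L^2$-orthogonal projection $\mathcal{S}$ onto the space of solenoidal tensors, which is a classical pseudodifferential operator bounded on Hölder spaces (since $D^* D$ is elliptic with finite-dimensional kernel). Since $\mathcal{S} f = f$ and $\mathcal{S} (Dp) = 0$, we deduce $f = \mathcal{S}(f - Dp)$, hence $\|f\|_{C^{\gamma'}} \leq C \eps^\tau$. If $\gamma' \geq \beta$, the embedding $C^{\gamma'} \hookrightarrow C^\beta$ gives the result with $\theta_1 = \tau$; otherwise the standard interpolation inequality $\|f\|_{C^\beta} \leq C\|f\|_{C^{\gamma'}}^{1-\theta'}\|f\|_{C^\alpha}^{\theta'}$ with $\theta' = (\beta-\gamma')/(\alpha-\gamma') \in (0,1)$ gives the result with $\theta_1 = (1-\theta')\tau > 0$. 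The main obstacle is the tensor extraction step: combining vertical Fourier truncation, elliptic estimates for $X_-$ on high modes, and s-injectivity of $I_m$, all while tracking Hölder norms through the manipulations.
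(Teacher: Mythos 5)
Your first step — lifting to $SM$, applying Theorem \ref{theorem:livsic-approche} to $\pi_m^* f$ to get $\pi_m^*f = Xu + h$ with $\norm{h}_{C^{\alpha'}} \lesssim \norm{I_m f}_{\ell^\infty}^{\tau}$, and closing with interpolation — matches the paper exactly. But the middle of your argument, the ``tensor extraction step,'' diverges sharply from the paper's route, and I do not think it can be made to work as stated.

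The gap is concrete. You want to split $u$ into vertical modes, set aside the low modes as $\pi_{m-1}^*p$, and then claim the high-mode part of $u$ (equivalently, the failure of $\pi_m^*(f-Dp)$ to equal $Xu_{\mathrm{low}}+h$) is $O(\eps^\tau)$ ``via the Pestov / Paternain--Salo--Uhlmann--Guillarmou machinery.'' This is precisely the non-trivial point and you have not justified it. Two independent obstructions: \textbf{(i)} The Pestov-type high-mode estimates that control the vertical degree of $u$ in terms of the vertical degree of $Xu$ are available, for $m\geq 2$ and $\dim M\geq 3$, only under a curvature (or no-conjugate-points-in-a-quantitative-sense) assumption, and the theorem here assumes only that the geodesic flow is Anosov plus abstract s-injectivity. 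In this generality nothing forbids $u$ from having infinitely many vertical modes of non-negligible size; the decomposition produced by Theorem \ref{theorem:livsic-approche} certainly does not give $u$ with finite vertical degree, and in fact $X_-u_{m+1}$ need not be small just because $h$ is — conformal-Killing-type obstructions enter exactly here, and s-injectivity of $I_m$ kills only a finite-dimensional piece, not the infinite-dimensional phenomenon you need to control. \textbf{(ii)} Even where Pestov estimates apply, they are $L^2$ energy identities; transporting them to Hölder scales would require substantial additional work, which you have not indicated. So ``the main obstacle'' you flag at the end is indeed the whole ball game, and your sketch does not resolve it.

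The paper sidesteps all of this by never trying to extract a tensor potential $p$ at all. Instead it uses the generalized normal operator $\Pi_m = {\pi_m}_*(\Pi + \mathbf{1}\otimes\mathbf{1})\pi_m^*$, which is a pseudodifferential operator of order $-1$ (Theorem \ref{theorem:microlocal-theorem}), elliptic on solenoidal tensors (Lemma \ref{theorem:ellipticite-pim}), and — crucially, using s-injectivity of $I_m$ — invertible with a ΨDO inverse of order $1$ (Lemma \ref{lemma:stabilite-pim2}). Since $\Pi X = 0$ and $(\mathbf{1}\otimes\mathbf{1})Xu = 0$, feeding the decomposition $\pi_m^*f = Xu+h$ into $\Pi_m$ gives $\Pi_m f = {\pi_m}_*(\Pi + \mathbf{1}\otimes\mathbf{1}) h$, and the boundedness of $\Pi$ on Hölder-Zygmund scales (Lemma \ref{lemma:boundedness-pi}) gives $\norm{f}_{C_*^{-1-\alpha'-(n+1)/2}}\lesssim \norm{h}_{C^{\alpha'}}$; interpolation with $\norm{f}_{C^\alpha}\leq 1$ then finishes. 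Note the estimate is obtained at very \emph{negative} regularity, not at $C^{\gamma'}$ with $\gamma'>0$ as in your sketch; trying to keep positive regularity is a further source of difficulty in your plan. If you want to rescue your approach you would essentially have to re-derive the properties of $\Pi_m$ by hand, at which point you are better off using it directly.

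Minor points: the self-comparison of constants in your first paragraph is fine, and the final interpolation step is correct and matches the paper. Also beware that $\mathcal{S}(Dp)=0$ requires $\mathcal S$ to be the $L^2$-orthogonal projection onto $\ker D^*$, which is indeed a ΨDO of order $0$ (this is $\pi_{\ker D^*}$ in the paper, \S\ref{sssection:projection-solenoidal-tensors}) — but you never legitimately arrive at a bound on $f - Dp$, so this last step has nothing to act on.
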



Actually, if $I_m$ is not known to be injective, one still has the previous
estimate by taking $f$ solenoidal and orthogonal to the kernel of $I_m$.
Combining this estimate with Theorem \ref{theorem:livsic-approche} (and more
specifically~\eqref{finite_livsic}), we immediately obtain the following 

\begin{theorem}
\label{theorem:injectivite-xray-fini}
Assume $I_m$ is s-injective. Then for all $0 < \beta < \alpha < 1$, there exists $\theta_2 \coloneqq\theta(\alpha,\beta) > 0$ and $C\coloneqq C(\alpha,\beta) > 0$ such that for any $L > 0$ large enough: if $f \in C_{\sol}^\alpha(M,\otimes^m_S T^*M)$ is a solenoidal symmetric $m$-tensor such that $\|f\|_{C^\alpha} \leq 1$, and $I_mf(\gamma)=0$ for all closed geodesics $\gamma \in \mc{C}$ such that $\ell(\gamma) \leq L$, then $\|f\|_{C^\beta} \leq CL^{-\theta_2}$.
\end{theorem}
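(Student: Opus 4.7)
The plan is to chain together the two earlier results almost verbatim, as the statement essentially packages them. Given $f$ satisfying the hypotheses, I would first lift it to a function on the unit tangent bundle via $F \coloneqq \pi_m^* f \in C^\alpha(SM)$. Since $\|f\|_{C^\alpha(M)} \le 1$, one has $\|F\|_{C^\alpha(SM)} \le C_m$ for a constant depending only on $m$ and the metric; after dividing by $C_m$ we can assume $\|F\|_{C^\alpha(SM)} \le 1$ and track the constant at the end. The assumption $I_m f(\gamma)=0$ for every closed geodesic of length $\le L$ translates, via the correspondence between closed geodesics and closed orbits of the geodesic flow, into $IF(\gamma)=0$ for every closed orbit of the flow with $\ell(\gamma)\le L$.

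Next, I would set $\eps \coloneqq L^{-2}$, so that $L = \eps^{-1/2}$ and in particular $|IF(\gamma)|=0 \le \eps$ for every $\gamma$ with $\ell(\gamma)\le \eps^{-1/2}$. Choosing $L$ large enough guarantees $\eps$ small enough to enter the regime of Theorem~\ref{theorem:livsic-approche}, whose application yields a decomposition $F = Xu + h$ with $u$ differentiable in the flow direction and $\|h\|_{C^\beta} \le C\eps^\tau = CL^{-2\tau}$. Integrating this identity along an arbitrary closed orbit $\gamma$ kills the $Xu$ term, leaving
\begin{equation*}
\|IF\|_{\ell^\infty} = \|I_m f\|_{\ell^\infty} \le \|h\|_{C^0} \le CL^{-2\tau}.
\end{equation*}

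Finally, I would feed this back into the stability estimate of Theorem~\ref{theorem:xray}: since $f$ is solenoidal and $\|f\|_{C^\alpha} \le 1$, we get, for any $0<\beta<\alpha<1$,
\begin{equation*}
\|f\|_{C^\beta} \le C \|I_m f\|_{\ell^\infty}^{\theta_1} \le C L^{-2\tau \theta_1},
\end{equation*}
so the desired exponent is simply $\theta_2 \coloneqq 2\tau\theta_1$. One mild bookkeeping point is that the $\beta$ produced by Theorem~\ref{theorem:livsic-approche} is not the same as the $\beta$ appearing in Theorem~\ref{theorem:xray}; this is harmless because the Livsic step is only used to bound $\|I_m f\|_{\ell^\infty}$ via $\|h\|_{C^0}$, for which any positive Hölder exponent suffices, while the stability step controls $\|f\|_{C^\beta}$ with the user's chosen $\beta < \alpha$.

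There is no real obstacle here beyond the two ingredients themselves; the only care needed is to choose $L$ large enough so that $\eps = L^{-2}$ falls in the range where Theorem~\ref{theorem:livsic-approche} applies, and to absorb the constant $C_m$ from the lift $f \mapsto \pi_m^* f$ into the final constant $C(\alpha,\beta)$.
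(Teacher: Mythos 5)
Your proposal is correct and matches the paper's argument: the theorem is obtained, exactly as you do, by combining the stability estimate of Theorem~\ref{theorem:xray} with the finite approximate Livsic Theorem~\ref{theorem:livsic-approche} (via the consequence~\eqref{finite_livsic}), setting $\eps = L^{-2}$ and taking $\theta_2 = 2\tau\theta_1$. Your bookkeeping remarks about the lift $\pi_m^*$, the normalization, and the mismatch between the two $\beta$'s are all sound and reflect precisely the (implicit) content of the paper's one-line proof.
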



Even in the case where $f \in C^\alpha(M)$ is a function on $M$, this result seemed to be previously unknown. \\

\noindent \textbf{Acknowledgements:} We warmly thank Yannick Guedes Bonthonneau and Colin Guillarmou for fruitful discussions. T.L. has received funding from the European Research Council (ERC) under the European Union’s Horizon 2020 research and innovation programme (grant agreement No. 725967).

\section{On symmetric tensors}

\label{appendix:tensors}

We describe elementary properties of symmetric tensors on Riemannian
manifolds. This is a background section for which we also refer to \cite{Guillemin-Kazhdan-80-2, Dairbekov-Sharafutdinov-10}.

\subsection{Definitions and first properties}

\subsubsection{Symmetric tensors in Euclidean space}

\label{ssection:euclidien}

Let $E$ be a Euclidean $(n+1)$-dimensional vector space endowed with a
metric $g$ and let $(\mathbf{e}_1,...,\mathbf{e}_{n+1})$ be an orthonormal
basis. We say that a tensor $f \in \otimes^m E^*$ is symmetric if
$f(v_1,...v_m) = f(v_{\tau(1)},...,v_{\tau(m)})$, for all $v_1,...,v_m \in
E$ and $\tau \in \mathfrak{S}_m$, the group of permutations of order $m$.
We denote by $\otimes^m_S T^*E$ the vector space of symmetric $m$-tensors
on $E$. There is a natural projection $\sigma : \otimes^m E^* \rightarrow
\otimes^m_S E^*$ given by
\[ \sigma\left(v_1^* \otimes ... \otimes v_m^*\right) = \dfrac{1}{m!} \sum_{\tau \in \mathfrak{S}_m} v_{\tau(1)}^* \otimes ... \otimes v_{\tau(m)}^*, \]
for all $v_1^*, ..., v_m^* \in E^*$. The metric $g$ induces a scalar
product $\langle \cdot,\cdot\rangle$ on $\otimes^m E^*$ by declaring the
basis $(\mathbf{e}^*_{i_1} \otimes ... \otimes \mathbf{e}^*_{i_m})_{1 \leq
i_1, ..., i_m \leq n+1}$ to be orthonormal which yields
\[ \langle u_1^*\otimes...\otimes u_m^*,v_1^*\otimes...\otimes v_m^*\rangle = \prod_{i=1}^m g^{-1}(u_i^*,v_i^*), \]
where $g^{-1}$ is the dual metric, that is the natural metric on $E^*$
which makes the musical isomorphism $\sharp : E \rightarrow E^*$ an
isometry. Since $\sigma$ is self-adjoint with respect to this metric, it is
an orthogonal projection. Let $(g_{ij})_{1\leq i,j \leq n+1}$ denote the
metric $g$ in the coordinates $(x_1,...,x_{n+1})$. Then the metric can be
expressed as
\[ \langle f,h\rangle = \sum_{i_1,...,i_m=1}^{n+1} f_{i_1 ... i_m} h^{i_1 ... i_m}, \]
where $h^{i_1 ... i_m} = \sum_{j_1,...,j_m=1}^{n+1} g^{i_1 j_1} ... g^{i_m j_m} h_{j_1 ... j_m}$. We
define the trace $\Tr_g : \otimes^m_S E^* \rightarrow \otimes^{m-2}_S E^*$
of a symmetric tensor by
\[ \Tr_g(f) = \sum_{i=1}^{n+1} f(\mathbf{e}_i,\mathbf{e}_i, \cdot, ..., \cdot). \]
In coordinates, $\Tr_g(f)(v_2,...,v_m) =
\Tr(g^{-1}f(\cdot,\cdot,v_2,...,v_m))$. Its adjoint with respect to the
scalar products is the map $I : \otimes^{m-2}_S E^* \rightarrow \otimes^m_S
E^*$ given by $I(u) = \sigma(g \otimes u)$.

Symmetric tensors can also be seen as homogeneous polynomials on the unit sphere of the
Euclidean space. We denote by $\Ss_E$ the $n$-dimensional unit sphere on
$(E,g)$ and by $dS$ the Riemannian measure on the sphere induced by the
metric $g|_{\Ss_E}$. We define $\pi_m : (x,v) \mapsto (x,\otimes^m v)$ for
$v \in E$; it induces a canonical morphism $\pi_m^* : \otimes_S^m E^*
\rightarrow C^\infty(\Ss_E)$ given by $\pi_m^* f (v) = f(v,...,v)$. Its
formal adjoint is $\langle \pi_m^* f, h \rangle_{L^2(\Ss_E,dS)} = \langle
f, {\pi_m}_*h\rangle_{\otimes^m T^*M}$, where $f \in \otimes_S^m T^*M, h
\in C^\infty(\Ss_E)$. In coordinates, \be \label{equation:pim-star}
({\pi_m}_* h)_{i_1...i_m} \coloneqq {\pi_m}_* h (\partial_{i_1}, ...,
\partial_{i_m}) = \sum_{j_1,...,j_m=1}^{n+1} g_{i_1j_1}...g_{i_mj_m}\int_{\Ss_E}h(v)v_{j_1}...v_{j_m}
dS \ee


Also remark that (\ref{equation:pim-star}) can be rewritten intrinsically
as \be \label{equation:pim-star-2} \forall u_1, ..., u_m \in E, \qquad
{\pi_m}_* h (u_1, ..., u_m) = \int_{\Ss_E} h(v) g(v,u_1) ... g(v,u_m) dv
\ee

The map ${\pi_m}_* \pi_m^*$ is an isomorphism we will study in the next
paragraph. Also note that $\pi_m^*(\sigma f) = \pi_m^*f$ (since all the
antisymmetric parts of the tensor $f$ vanish by plugging $m$ times the same
vector $v$).

We denote by $j_\xi$ the multiplication by $\xi$, that is $j_\xi : f
\mapsto \xi \otimes f$, and by $i_\xi$ the contraction, that is $i_\xi : f
\mapsto u(\xi^\sharp, \cdot, ..., \cdot)$. The adjoint of $i_\xi$ on
symmetric tensors with respect to the $L^2$-scalar product is $\sigma
j_\xi$, that is
\[ \forall f \in \otimes^{m-1}_S E^*, h \in \otimes^m_S E^*, \hspace{1cm} \langle \sigma j_\xi f, h \rangle = \langle f, i_\xi h \rangle \]

The space $\otimes^S_m E^*$ can thus be decomposed as the direct sum
\[ \otimes_S^m E^* = \ran \left(\sigma j_\xi|_{\otimes_S^{m-1} E^*}\right) \oplus^\bot \ker \left(i_\xi|_{\otimes_S^{m} E^*}\right) \]
We denote by $\pi_{\ker i_\xi}$ the projection onto the right space,
parallel to the left space. We will need the following
\begin{lemma}
\label{lemma:symbole-projection} For all $f, h \in \otimes^m_S E^*$,
\[ C_{n,m} \int_{\langle \xi, v\rangle=0} \pi_m^*f(v) \pi_m^*h(v) dS_\xi(v) =  \langle \pi_{\ker i_\xi} {\pi_m}_* \pi_m^*\pi_{\ker i_\xi} f, h\rangle ,\]
where
\[ C_{n,m} = \int_0^\pi \sin^{n-1+2m}(\varphi)d\varphi = \sqrt{\pi} \dfrac{\Gamma((n+2m)/2)}{\Gamma((n+1+2m)/2)}, \]
and $dS_\xi$ is the canonical measure induced on the $n-1$ dimensional
sphere $\Ss_{E,\xi}\coloneqq \Ss_E \cap \left\{\langle \xi,
v\rangle=0\right\}$.
\end{lemma}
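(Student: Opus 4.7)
The plan is to reduce both sides of the identity to the same ``equatorial integral'' by exploiting the kernel condition $i_\xi \tilde f = 0$ for $\tilde f = \pi_{\ker i_\xi} f$.

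First I would use the orthogonal decomposition $f = \sigma j_\xi p + \tilde f$ with $\tilde f \in \ker i_\xi$, and similarly $h = \sigma j_\xi q + \tilde h$. As noted in the excerpt, $\pi_m^*(\sigma u) = \pi_m^* u$, so
\[ \pi_m^*(\sigma j_\xi p)(v) = (j_\xi p)(v,\ldots,v) = \langle \xi, v\rangle \cdot \pi_{m-1}^* p(v), \]
which vanishes on $\{\langle \xi, v\rangle = 0\}$. Therefore the left-hand side depends only on the projected tensors:
\[ \text{LHS} = C_{n,m} \int_{\Ss_{E,\xi}} \pi_m^* \tilde f \cdot \pi_m^* \tilde h \, dS_\xi. \]
For the right-hand side, orthogonality of $\pi_{\ker i_\xi}$ with respect to $\langle\cdot,\cdot\rangle$ together with the $L^2$-adjointness of $\pi_m^*$ and ${\pi_m}_*$ yield
\[ \text{RHS} = \langle {\pi_m}_* \pi_m^* \tilde f, \tilde h\rangle = \langle \pi_m^* \tilde f, \pi_m^* \tilde h\rangle_{L^2(\Ss_E)}. \]
So the lemma reduces to a single identity comparing the $L^2(\Ss_E)$-inner product against the restricted integral over the equator $\Ss_{E,\xi}$.

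The key algebraic step, which I view as the heart of the argument, is a polar decomposition along $\xi^\sharp$. Assuming $|\xi|=1$ (both sides are invariant under rescaling $\xi$), parameterize $v \in \Ss_E$ as $v = \cos\varphi \cdot \xi^\sharp + \sin\varphi \cdot w$ with $\varphi \in [0,\pi]$ and $w \in \Ss_{E,\xi}$. Expanding $\tilde f(v,\ldots,v)$ by multilinearity, every term containing at least one $\xi^\sharp$-slot vanishes because $i_\xi \tilde f = 0$; only the pure $w$-term survives, so
\[ \pi_m^* \tilde f(v) = \sin^m\varphi \cdot \pi_m^* \tilde f(w), \]
and likewise for $\tilde h$.

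Finally I would use the standard factorization $dS = \sin^{n-1}\varphi \, d\varphi \, dS_\xi(w)$ of the spherical measure. Plugging the identity above into $\langle \pi_m^* \tilde f, \pi_m^* \tilde h\rangle_{L^2(\Ss_E)}$ separates the variables: the $\varphi$-integral gives exactly $\int_0^\pi \sin^{n-1+2m}\varphi\,d\varphi = C_{n,m}$, and the $w$-integral gives $\int_{\Ss_{E,\xi}} \pi_m^* \tilde f \cdot \pi_m^* \tilde h \, dS_\xi$. Combined with the two reductions above, this closes the identity. I do not anticipate any serious obstacle: once the decomposition into $\ker i_\xi \oplus \operatorname{ran}(\sigma j_\xi)$ is used on both sides, the proof is a clean polar-coordinate computation driven by the single observation $\pi_m^*\tilde f(v) = \sin^m\varphi \, \pi_m^*\tilde f(w)$.
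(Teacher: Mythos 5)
Your proof is correct and follows essentially the same route as the paper: both first reduce to $f,h\in\ker i_\xi$ by noting that $\pi_m^*(\sigma j_\xi\,\cdot)$ vanishes on the equator (and, on the right-hand side, that $\pi_{\ker i_\xi}$ is the orthogonal projection and $\pi_m^*,{\pi_m}_*$ are $L^2$-adjoints), and then evaluate via the polar decomposition $v'=\sin\varphi\,w+\cos\varphi\,\xi^\sharp/|\xi|$ with $dS=\sin^{n-1}\varphi\,d\varphi\,dS_\xi(w)$. The one step you spell out slightly differently — $\pi_m^*\tilde f(v')=\sin^m\varphi\,\pi_m^*\tilde f(w)$ from multilinearity and $i_\xi\tilde f=0$ — is exactly what the paper uses (there phrased as translation-invariance $\pi_m^*f(v+\cos\varphi\,\xi^\sharp/|\xi|)=\pi_m^*f(v)$ combined with degree-$m$ homogeneity), so the two arguments coincide.
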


\begin{proof}
We can write $h = \sigma j_\xi h_1 + h_2$ where $h_1 \in \otimes_S^{m-1}
E^*, h_2 \in \ker \left(i_\xi|_{\otimes_S^{m} T^*_xM}\right)$. Note that
$\pi_m^*(\sigma j_\xi h_1)(v) = \pi_m^*(j_\xi h_1)(v) = \langle \xi,v
\rangle \pi_{m-1}^*h_1(v)$ and this vanishes on $\left\{\langle \xi,
v\rangle=0\right\}$ (and the same holds for $f$). In other words, $\pi_m^*h
= \pi_m^*\pi_{\ker i_\xi}$ on $\left\{\langle \xi,v \rangle = 0\right\}$.
We are thus left to check that for $f, h \in \ker i_\xi$,
\[ C_{n,m} \int_{\langle \xi, v\rangle=0} \pi_m^*f(v) \pi_m^*h(v) dS_\xi(v) = \int_{\Ss_E} \pi_m^*f(v) \pi_m^*h(v) dS(v) \]
We will use the coordinates $v'=(v,\varphi) \in \Ss_{E,\xi} \times [0,\pi]$
on $\Ss_E$ which allow to decompose $v' = \sin(\varphi)v
+ \cos(\varphi)\xi^\sharp/|\xi|$. Then the measure on $\Ss_E$ disintegrates
as $dS = \sin^{n-1}(\varphi)d\varphi dS_\xi(v)$. Also remark that
$\pi_m^*f(v+\cos(\varphi)\xi^\sharp/|\xi|) = \pi_m^*f(v)$. Then, if
$C_{n,m} \coloneqq \int_0^\pi \sin^{n-1+2m}(\varphi) d\varphi$, we obtain:
\[ \begin{split} \int_{\langle \xi, v\rangle=0}  & \pi_m^*f(v) \pi_m^*h(v) dS_\xi(v) \\
& = C^{-1}_{n,m} \int_0^\pi \sin^{n-1+2m}(\varphi) d\varphi \int_{\langle \xi, v\rangle=0}  \pi_m^*f(v) \pi_m^*h(v) dS_\xi(v) \\
& = C^{-1}_{n,m} \int_0^\pi \int_{\langle \xi, v\rangle=0} \pi_m^*f(\sin(\varphi)v+\cos(\varphi)\xi^\sharp/|\xi|)  \\
& \hspace{3cm} \times \pi_m^*h(\sin(\varphi)v+\cos(\varphi)\xi^\sharp/|\xi|) \sin^{n-1}(\varphi) d\varphi dS_\xi(v) \\
& = C^{-1}_{n,m} \int_{\Ss_E} \pi_m^*f(v') \pi_m^*h(v') dS(v') \end{split} \]
\end{proof}

\subsubsection{Spherical harmonics}

\label{ssection:spherical-harmonics}

Let $\Delta|_{\Ss_E} \coloneqq \Div_{\Ss_E} \nabla_{\Ss_E}$ be the
Laplacian on the unit sphere $\Ss_E$ induced by the metric $g|_{\Ss_E}$ and
$\Delta$ be the usual Laplacian on $E$ induced by $g$. Let
\[ L^2(\Ss_E) = \bigoplus_{m=0}^{+\infty} \Omega_m \]
be the spectral break up in spherical harmonics, where $\Omega_m \coloneqq
\ker(\Delta|_{\Ss_E} + m(m+n-1))$ are the eigenspaces of the Laplacian. We
denote by $E_m$ the vector space of trace-free symmetric $m$-tensors, where
the trace is, as before, taken over the first two coordinates.

\begin{lemma}
$\pi_m^* : E_m \rightarrow \Omega_m$ is an isomorphism and ${\pi_m}_*
\pi_m^*|_{E_m} = \lambda_{m,n} \mathbbm{1}_{E_m}$, for some constant
$\lambda_{m,n} \neq 0$.
\end{lemma}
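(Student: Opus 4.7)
The plan is to identify $\pi_m^* f$ with the restriction to $\Ss_E$ of a homogeneous polynomial of degree $m$ on $E$, and then run the standard dictionary between harmonic polynomials, trace-free tensors, and spherical harmonics.

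\textbf{Step 1: The key identity $\Delta (\pi_m^* f) = c_m \, \pi_{m-2}^*(\Tr_g f)$.} Viewing $\pi_m^* f(v) = f(v,\ldots,v)$ as a polynomial on all of $E$, a direct coordinate computation using $\partial_{v_i} \pi_m^* f = m \cdot \pi_{m-1}^*(i_{\mathbf{e}_i} f)$ shows that
\[ \Delta_E (\pi_m^* f) = m(m-1)\,\pi_{m-2}^*(\Tr_g f). \]
In particular, if $f\in E_m$, then $\pi_m^* f$ is a harmonic homogeneous polynomial of degree $m$ on $E$.

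\textbf{Step 2: From harmonic homogeneous polynomials to $\Omega_m$.} Writing the Euclidean Laplacian in polar coordinates as $\Delta_E = \partial_r^2 + \tfrac{n}{r}\partial_r + \tfrac{1}{r^2}\Delta_{\Ss_E}$ and plugging in a homogeneous polynomial $P(v) = r^m P(v/|v|)$, the equation $\Delta_E P = 0$ becomes $\Delta_{\Ss_E} (P|_{\Ss_E}) = -m(m+n-1)\, P|_{\Ss_E}$, so $P|_{\Ss_E} \in \Omega_m$. Hence $\pi_m^*$ sends $E_m$ into $\Omega_m$.

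\textbf{Step 3: Bijectivity.} Injectivity is immediate: if $\pi_m^* f$ vanishes on $\Ss_E$, then by homogeneity the polynomial $v\mapsto f(v,\ldots,v)$ vanishes on $E$, and polarization then forces $f=0$ (since $f$ is already symmetric). Surjectivity is the classical statement that every spherical harmonic extends to a harmonic homogeneous polynomial: given $Y\in\Omega_m$, the extension $P(v) \coloneqq |v|^m Y(v/|v|)$ is a harmonic polynomial (by the polar formula above), and writing $P(v) = \sum f_{i_1\cdots i_m} v_{i_1}\cdots v_{i_m}$ defines a symmetric $m$-tensor $f$ with $\pi_m^* f = Y$; harmonicity of $P$ together with Step 1 gives $\Tr_g f = 0$, so $f\in E_m$.

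\textbf{Step 4: The multiple-of-identity statement.} The operator $T \coloneqq {\pi_m}_*\pi_m^*$ restricted to $\otimes_S^m E^*$ is built from the Euclidean structure alone, hence commutes with the natural $O(E)$-action. Since $E_m$ is an irreducible $O(E)$-representation, Schur's lemma forces $T|_{E_m} = \lambda_{m,n}\,\mathbbm{1}_{E_m}$. The constant is nonzero because for any $f\in E_m\setminus\{0\}$ one has $\langle T f,f\rangle = \|\pi_m^* f\|_{L^2(\Ss_E)}^2 > 0$ by Step 3.

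I expect the main obstacle to be Step 2 (surjectivity via the harmonic-extension argument) only insofar as it relies on the classical decomposition theorem for spherical harmonics; if one prefers to avoid quoting it, an alternative route is to argue dimensionally by showing $\dim E_m = \dim \Omega_m = \binom{n+m}{m} - \binom{n+m-2}{m-2}$ (the first equality from $\otimes_S^m E^* = E_m \oplus \sigma j_g(\otimes_S^{m-2}E^*)$, i.e.\ the trace splitting, and the second being the standard dimension count for spherical harmonics), and then invoke injectivity from Step 3.
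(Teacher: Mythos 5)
The paper states this lemma without proof—it sits in the background section \S\ref{ssection:spherical-harmonics}, where the authors refer the reader to \cite{Guillemin-Kazhdan-80-2, Dairbekov-Sharafutdinov-10}—so there is no proof in the paper to compare against. Your argument is correct and is exactly the standard route: the identity $\Delta_E(\pi_m^*f)=m(m-1)\pi_{m-2}^*(\Tr_g f)$ in Step 1, the polar-coordinate translation to $\Omega_m$ in Step 2, injectivity by polarization and surjectivity by the classical harmonic-extension theorem (or the dimension count you offer as a backup) in Step 3, and Schur's lemma in Step 4.

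One small point worth flagging in Step 4: to conclude $T|_{E_m}=\lambda_{m,n}\mathbbm{1}_{E_m}$ from Schur's lemma you need to know not only that $E_m$ is an irreducible $O(E)$-module but also that $T$ maps $E_m$ into itself, which is not automatic from equivariance alone. It follows either from the multiplicity-one decomposition $\otimes_S^m E^*=\bigoplus_{k=0}^{[m/2]} I^k(E_{m-2k})$ into pairwise non-isomorphic irreducibles (so any equivariant endomorphism is block-diagonal), or more directly by noting that for $Y\in\Omega_m$ and $u\in\otimes_S^{m-2}E^*$ one has $\langle {\pi_m}_*Y,\sigma(g\otimes u)\rangle=\langle Y,\pi_{m-2}^*u\rangle_{L^2(\Ss_E)}=0$ since $\pi_{m-2}^*u$ lies in $\bigoplus_{k\geq 0}\Omega_{m-2-2k}\perp\Omega_m$, whence ${\pi_m}_*(\Omega_m)\subset E_m$. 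Since you have already established $\pi_m^*(E_m)\subset\Omega_m$ in Steps 1--2, this closes the loop without invoking Schur at all if one prefers. With that clarification, the proof is complete and correct.
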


This also shows that, up to rescaling by the constant $\lambda_{m,n}$,
$\pi_m^* : E_m \rightarrow \Omega_m$ is an isometry. One could be more
accurate and actually show that the maps \be
\label{equation:decomposition-tenseurs} \pi_m^* : \otimes^m_S E^*
\rightarrow \oplus_{k=0}^{[m/2]} \Omega_{m-2k}, \qquad {\pi_m}_* : 
\oplus_{k=0}^{[m/2]} \Omega_{m-2k} \rightarrow \otimes^m_S E^* \ee are
isomorphisms, where $[m/2]$ stands for the integer part of $m/2$. This
follows from the (unique) decomposition of a symmetric tensor into a
trace-free part and a remainder (which lies in the image of the adjoint of
$\Tr$). More precisely, by iterating this process, one can decompose $u$ as
$u = \sum_{k=0}^{[m/2]} I^k(u_k)$, where $I : \otimes^{\bullet}_S E^*
\rightarrow \otimes^{\bullet+2}_S E^*$ is the adjoint of $\Tr$ with respect
to the scalar products and $u_k \in \otimes^{m-2k}_S E^*, \Tr(u_k)=0$ and
$\pi_m^*I^k(u_k) \in \Omega_{m-2k}$. Then
(\ref{equation:decomposition-tenseurs}) is an immediate consequence of the
previous lemma. The map ${\pi_m}_* \pi_m^*$ acts by scalar multiplication
on each component $I^k(u_k)$ (but with a different constant though, so
${\pi_m}_* \pi_m^*$ is not a multiple of the identity). Since we will only
need the fact that ${\pi_m}_* \pi_m^*$ is an isomorphism, we do not provide
further details.

\subsubsection{Symmetric tensors on a Riemannian manifold}

\textit{Decomposition in solenoidal and potential tensors.} We now consider
the Riemannian manifold $(M,g)$ and denote by $d\mu$ the Liouville measure
on the unit tangent bundle $SM$. All the previous definitions naturally
extend to the vector bundle $TM \rightarrow M$. For $f,h \in
C^\infty(M,\otimes_S^m T^*M)$, we define the $L^2$-scalar product
\[ \langle f,h \rangle = \int_M \langle f_x,h_x\rangle_x d\vol(x), \]
where $\langle \cdot,\cdot\rangle_x$ is the scalar product on $T_xM$
introduced in the previous paragraph and $d\vol$ is the Riemannian measure induced by $g$. The map $\pi_m^* :
C^\infty(M,\otimes_S^m T^*M) \rightarrow C^\infty(SM)$ is the canonical
morphism given by $\pi_m^*f(x,v) = f_x(v,...,v)$, whose formal adjoint with
respect to the two $L^2$-inner products (on $L^2(SM,d\mu)$ and
$L^2(\otimes_S^m T^*M,d\vol)$) is ${\pi_m}_*$, i.e., $\langle \pi_m^* f, h
\rangle_{L^2(SM,d\mu)} = \langle f, {\pi_m}_*h\rangle_{L^2(\otimes_S^m T^*M,
d\vol)}$.

If $\nabla$ denotes the Levi-Civita connection, we set $D \coloneqq \sigma
\circ \nabla : C^\infty(M,\otimes_S^m T^*M) \rightarrow
C^\infty(M,\otimes_S^{m+1} T^*M)$, the symmetrized covariant derivative. Its
formal adjoint with respect to the $L^2$-scalar product is
$D^*=-\Tr(\nabla)$ where the trace is taken with respect to the
two first indices, like in \ref{ssection:euclidien}. One has the following
relation between the geodesic vector field $X$ on $SM$ and the operator
$D$:

\begin{lemma}
$X\pi_m^* = \pi_{m+1}^*D$
\end{lemma}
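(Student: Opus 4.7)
The plan is to unwind the definitions and exploit the fact that, along a geodesic, the tangent vector is parallel. Since $X$ generates the geodesic flow $\varphi_t$ on $SM$, one has for any $(x,v) \in SM$
\[
    X\pi_m^*f(x,v) = \frac{d}{dt}\Big|_{t=0} \pi_m^*f(\varphi_t(x,v)) = \frac{d}{dt}\Big|_{t=0} f_{\gamma(t)}\bigl(\dot\gamma(t),\dots,\dot\gamma(t)\bigr),
\]
where $\gamma$ is the geodesic with $\gamma(0)=x$, $\dot\gamma(0)=v$.

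The main step is to differentiate this expression using the Leibniz rule for the Levi-Civita connection. Because $\gamma$ is a geodesic, $\nabla_{\dot\gamma}\dot\gamma = 0$, so each of the $m$ occurrences of $\dot\gamma(t)$ is parallel along $\gamma$ and contributes nothing under the derivative. Only the derivative of $f$ itself survives, which gives
\[
    \frac{d}{dt}\Big|_{t=0} f_{\gamma(t)}\bigl(\dot\gamma(t),\dots,\dot\gamma(t)\bigr) = (\nabla_v f)_x(v,\dots,v) = (\nabla f)_x(v; v,\dots,v) = \pi_{m+1}^*(\nabla f)(x,v).
\]

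To finish, observe that when one evaluates any $(m+1)$-tensor on the diagonal $(v, \dots, v)$, only its symmetric part contributes: indeed, $\pi_{m+1}^*$ vanishes on the kernel of the symmetrization $\sigma$, as already noted in \S\ref{ssection:euclidien} in the remark $\pi_m^*(\sigma f) = \pi_m^* f$. Therefore
\[
    \pi_{m+1}^*(\nabla f) = \pi_{m+1}^*(\sigma \nabla f) = \pi_{m+1}^*(Df),
\]
which gives $X\pi_m^* f = \pi_{m+1}^* D f$ at $(x,v)$. Since $(x,v)$ was arbitrary, this proves the identity.

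There is no real obstacle here: the only subtle point is remembering that $\nabla f$ is not symmetric a priori, which is precisely why $D$ is defined as $\sigma\circ \nabla$ and not simply as $\nabla$. The parallel-transport argument collapses the derivative onto $f$ alone, and the symmetrization is then invisible to $\pi_{m+1}^*$.
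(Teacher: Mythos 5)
The paper states this lemma without proof, so there is no argument in the source to compare against; your proof is correct and is the natural one. You correctly isolate the two key points: the geodesic equation $\nabla_{\dot\gamma}\dot\gamma = 0$ kills all Leibniz terms except $(\nabla_v f)(v,\dots,v)$, and $\pi_{m+1}^*$ is blind to the antisymmetric part so $\pi_{m+1}^*(\nabla f) = \pi_{m+1}^*(\sigma\nabla f) = \pi_{m+1}^*(Df)$. Nothing is missing.
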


The operator $D$ can be seen as a differential operator of order $1$. Its
principal symbol is given by $\sigma(D)(x,\xi) f \mapsto \sigma(\xi \otimes
f)=\sigma j_\xi f$ (see \cite[Theorem 3.3.2]{Sharafutdinov-94}).

\begin{lemma}
$D$ is elliptic. It is injective on tensors of odd order, and its kernel is
reduced to $\R \sigma(g^{\otimes m/2})$ on even tensors.
\end{lemma}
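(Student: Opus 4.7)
The plan splits naturally into two parts: proving ellipticity via a pointwise linear-algebra identity, and computing the kernel by transferring the equation $Df=0$ into a statement about first integrals of the geodesic flow.

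For ellipticity, since the principal symbol has already been identified with $\sigma j_\xi$, it suffices to show that $\sigma j_\xi:\otimes_S^m T_x^*M\to \otimes_S^{m+1}T_x^*M$ is injective whenever $\xi\neq 0$. I would establish the commutation identity
\[
i_\xi\,\sigma j_\xi=\tfrac{1}{m+1}|\xi|^2\,\Id+\tfrac{m}{m+1}\,\sigma j_\xi\, i_\xi
\]
by expanding the symmetrization according to which slot receives $\xi$. Pairing with $f$ and using that $\sigma j_\xi$ is the $L^2$-adjoint of $i_\xi$ yields
\[
\|\sigma j_\xi f\|^2=\tfrac{1}{m+1}|\xi|^2\|f\|^2+\tfrac{m}{m+1}\|i_\xi f\|^2\geq \tfrac{1}{m+1}|\xi|^2\|f\|^2,
\]
which gives injectivity and hence ellipticity.

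For the kernel, assume $Df=0$. The intertwining $X\pi_m^*=\pi_{m+1}^*D$ from the previous lemma shows that $u\coloneqq \pi_m^*f\in C^\infty(SM)$ satisfies $Xu=0$, so $u$ is a smooth first integral of an Anosov (hence transitive) flow; therefore $u$ is a constant $c$. If $m$ is odd, the identity $u(x,-v)=(-1)^m u(x,v)$ forces $c=0$, so $\pi_m^*f=0$, and the fiberwise injectivity of $\pi_m^*$ on $\otimes_S^m T_x^*M$ (a direct consequence of the isomorphism~\eqref{equation:decomposition-tenseurs} applied to each tangent space) yields $f=0$. If $m$ is even, a direct computation gives $\pi_m^*\sigma(g^{\otimes m/2})(x,v)=g_x(v,v)^{m/2}=1$ on $SM$, so $\pi_m^*(f-c\,\sigma(g^{\otimes m/2}))=0$ and injectivity of $\pi_m^*$ gives $f=c\,\sigma(g^{\otimes m/2})$. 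The converse inclusion follows from $\nabla g=0$, which yields $\nabla g^{\otimes m/2}=0$ and therefore $D\sigma(g^{\otimes m/2})=0$.

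No serious obstacle arises. The key analytical input is the use of transitivity of the Anosov flow to promote $X$-invariance of the smooth function $\pi_m^*f$ to constancy — this is precisely what rules out the nontrivial Killing $m$-tensors, beyond the multiples of $\sigma(g^{\otimes m/2})$, that would otherwise be present on a general Riemannian manifold.
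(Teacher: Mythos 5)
Your proposal is correct, and the kernel computation follows essentially the same path as the paper (use the intertwining $X\pi_m^*=\pi_{m+1}^*D$, promote $X$-invariance of $\pi_m^*f$ to constancy, then split on the parity of $m$; the paper invokes ergodicity and the spherical-harmonics decomposition $f=I^{m/2}(u_{m/2})$, while you invoke transitivity and the injectivity of $\pi_m^*$ directly, which is an equivalent reformulation). The ellipticity argument, however, takes a genuinely different route. The paper's proof is a \emph{soft} one: it writes the pointwise quantity $\langle\sigma(D)u,\sigma(D)u\rangle$ as the integral
\[
\int_{\Ss_x^n}\langle\xi,v\rangle^2\,\bigl(\pi_m^*u\bigr)^2(v)\,dS_x(v),
\]
observes this is strictly positive for $u\neq 0$, and then invokes compactness of $\{|u|=1\}\times\Ss^n$ to extract a uniform lower bound $\|\sigma(D)(x,\xi)u\|\geq C|\xi|\,|u|$ with an implicit constant. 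Your proof instead establishes the algebraic commutation identity
\[
i_\xi\,\sigma j_\xi=\tfrac{1}{m+1}|\xi|^2\,\Id+\tfrac{m}{m+1}\,\sigma j_\xi\,i_\xi
\]
on $\otimes_S^m T_x^*M$ (which I verified by expanding the symmetrization and tracking where $\xi$ lands) and pairs with $f$ using the adjointness $\langle\sigma j_\xi f,h\rangle=\langle f,i_\xi h\rangle$ to obtain the \emph{explicit} sharp bound $\|\sigma j_\xi f\|^2\geq\frac{1}{m+1}|\xi|^2\|f\|^2$. This is a cleaner and more quantitative argument that avoids compactness entirely; the only thing the paper's route buys is that it requires no algebraic identity, only the positivity of the integral. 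One cosmetic caveat: the paper is careful to start with a distributional tensor $f\in\mc{D}'$, use elliptic regularity to get $f\in C^\infty$, and only then apply the intertwining. You jump directly to $\pi_m^*f\in C^\infty(SM)$; if the lemma is read as characterizing the kernel of $D$ on distributions (as the surrounding decomposition~\eqref{annexe:decomposition} suggests), you should add that one-line elliptic-regularity step. Also, strictly speaking an Anosov flow is not automatically transitive, but Anosov geodesic flows (which preserve Liouville measure) are, so your parenthetical remark is fine in context.
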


When $m$ is even, we will denote by $K_m=c_m \sigma(g^{\otimes m/2})$, with $c_m > 0$, a unitary vector in the kernel of $D$.

\begin{proof}
We fix $(x,\xi) \in T^*M$. For a tensor $u \in \otimes_s^m T^*_xM$, using
the fact that the antisymmetric part of $\xi \otimes u$ vanishes in the
integral:
\[ \langle \sigma(D)u, \sigma(D) u \rangle = \int_{\Ss_x^n} \langle\xi,v\rangle^2 {\pi_m^\ast u}^2(v) dS_x(v) = |\xi|^2 \int_{\Ss_x^n} \langle\xi/|\xi|,v\rangle^2 {\pi_m^\ast u}^2(v) dS_x(v) > 0, \]
unless $u \equiv 0$. Since $\otimes_s^m T^*_xM$ is finite dimensional, the
map
\[ (u,\xi/|\xi|) \mapsto \langle \sigma(D)(x,\xi/|\xi|)u, \sigma(D)(x,\xi/|\xi|) u \rangle,\]
defined on the compact set $\left\{u\in \otimes^m_S T^*_xM, |u|^2=1\right\}
\times \Ss^n$ is bounded and attains its lower bound $C^2 > 0$ (which is
independent of $x$). Thus $\|\sigma(x,\xi)\| \geq C|\xi|$, so the operator
is uniformly elliptic and can be inverted (on the left) modulo a compact
remainder: there exists pseudodifferential operators $Q, R$ of respective order
$-1,-\infty$ such that $QD= 1+R$.

As to the injectivity of $D$: if $Df = 0$ for some tensor $f \in
\mc{D}'(M,\otimes^m_S T^*M)$, then $f$ is smooth and $\pi_{m+1}^*Df =
X\pi_m^*f = 0$. By ergodicity of the geodesic flow, $\pi_m^*f=c \in
\Omega_0$ is constant. If $m$ is odd, then $\pi_m^*f(x,v)=-\pi_m^*f(x,-v)$
so $f \equiv 0$. If $m$ is even, then, by
\S\ref{ssection:spherical-harmonics}, $f = I^{m/2}(u_{m/2})$ where $u_{m/2}
\in \otimes^0_S E^*\simeq \R$ so $f=c' \sigma(g^{\otimes m/2})$.
\end{proof}

By classical elliptic theory, the ellipticity of $D$ implies that \be
\label{annexe:decomposition} H^{s}(M,\otimes^m_S T^*M) =
D(H^{s+1}(M,\otimes^{m-1}_S T^*M)) \oplus \ker D^*|_{H^{s}(M,\otimes^{m}_S
T^*M)}, \ee and the decomposition still holds in the smooth category and in
the $C^{k,\alpha}$-topology for $k \in \N, \alpha \in (0,1)$. This is the
content of the following theorem:

\begin{theorem}[Tensor decomposition]
\label{annexe:theoreme-decomposition} Let $s \in \R$ and $f \in
H^{s}(M,\otimes^m_S T^*M)$. Then, there exists a unique pair of symmetric
tensors $(p,h) \in H^{s+1}(M,\otimes^{m-1}_S T^*M) \times
H^{s}(M,\otimes^{m}_S T^*M)$ such that $f=Dp+h$ and $D^*h = 0$. Moreover, if $m=2l+1$ is odd, $\langle p, K_{2l}\rangle = 0$.
\end{theorem}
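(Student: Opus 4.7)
The plan is to reduce the statement to the invertibility (modulo a finite-dimensional kernel) of the elliptic Laplace-type operator $\Delta_D := D^* D$ acting on $H^{s+1}(M,\otimes^{m-1}_S T^*M)$. Since $D$ is elliptic by the previous lemma, its principal symbol $\sigma(D)(x,\xi) = \sigma j_\xi$ satisfies $\|\sigma(D)(x,\xi)u\|^2 \gtrsim |\xi|^2 \|u\|^2$, so $\Delta_D$ is a formally self-adjoint, non-negative, second-order elliptic operator. Moreover $\ker \Delta_D = \ker D$ because $\langle \Delta_D p, p\rangle_{L^2} = \|Dp\|_{L^2}^2$. By the preceding lemma, this kernel is $\{0\}$ when $m-1$ is odd and equals $\R K_{m-1}$ when $m-1$ is even, i.e. when $m = 2l+1$ is odd, in which case the kernel is $\R K_{2l}$.

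Standard elliptic theory then tells us that $\Delta_D : H^{s+1}\to H^{s-1}$ is Fredholm with closed range equal to $(\ker\Delta_D)^\perp$. Given $f\in H^s$, set $\phi := D^* f \in H^{s-1}$. For every $u\in\ker D$ one has $\langle \phi, u\rangle = \langle f, Du\rangle = 0$, so $\phi \perp \ker\Delta_D$, and one can solve $\Delta_D p = \phi$ with $p\in H^{s+1}$. The solution is unique modulo $\ker \Delta_D$; in the odd case we fix uniqueness by imposing the normalization $\langle p, K_{2l}\rangle = 0$. Define $h := f - Dp \in H^s$. Then $D^* h = D^* f - D^* D p = 0$ and the pair $(p,h)$ is a decomposition of the desired form.

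For uniqueness of the full decomposition, suppose $Dp_1 + h_1 = Dp_2 + h_2$ with $D^* h_i = 0$. Applying $D^*$ gives $\Delta_D(p_1 - p_2) = 0$, hence $p_1 - p_2 \in \ker D$, so $Dp_1 = Dp_2$ and $h_1 = h_2$; in the odd case, the normalization pins down $p$ itself. The $L^2$-orthogonality of $Dp$ and $h$ is immediate from $\langle Dp, h\rangle = \langle p, D^* h\rangle = 0$, and the regularity of $p$ and $h$ in the Sobolev scale is handed to us by elliptic regularity for $\Delta_D$ together with the fact that $D$ is a first-order differential operator.

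The only substantial point is the compatibility condition $\phi \perp \ker\Delta_D$, which is a tautology here, and the careful identification of $\ker D$ in the odd case; both were already supplied by the previous lemma, so the proof reduces to invoking the Fredholm theory for a self-adjoint elliptic operator. The extension to the $C^{k,\alpha}$ category mentioned in \eqref{annexe:decomposition} follows identically by replacing Sobolev elliptic estimates with Schauder estimates, and is not needed for the statement as given.
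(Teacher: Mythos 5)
Your proof is correct and is precisely the standard elliptic-theory argument the paper invokes without spelling out — the text only says ``by classical elliptic theory, the ellipticity of $D$ implies\dots'' before stating the theorem. The one cosmetic difference is that, a few paragraphs later, the paper works with the invertible modified Laplacian $\Delta_{m-1} = D^*D + \eps(m-1)\Pi_{K_{m-1}}$ so as to write $p = \Delta_{m-1}^{-1}D^*f$ and $\pi_{\ker D^*} = \mathbbm{1}-D\Delta_{m-1}^{-1}D^*$ directly, rather than solving the Fredholm problem for $D^*D$ modulo its finite-dimensional kernel as you do; the two bookkeepings are interchangeable, and your normalization $\langle p, K_{2l}\rangle = 0$ corresponds exactly to the fact that $\Delta_{m-1}^{-1}D^*f$ is automatically orthogonal to $K_{2l}$.
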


\textit{Tensorial distributions.} The spaces $H^s(M, \otimes^m_S T^*M)$
that have been mentioned so far are the $L^2$-based Sobolev spaces of order
$s \in \R$. They can be defined in coordinates (each coordinate of the
tensor has to be in $H^s_{\text{loc}}(\R)$) or more intrinsically by
setting $H^s(M, \otimes^m_S T^*M) \coloneqq
(\mathbbm{1}+D^*D)^{-s/2}L^2(M,\otimes^m_S T^*M)$. These two definitions are
equivalent by \cite[Proposition 7.3]{Shubin-01}, following the properties
of the operator $\mathbbm{1}+D^*D$ (it is elliptic, invertible,
positive). In the same fashion, the spaces $L^p(M, \otimes^m_S T^*M)$, for
$p \geq 1$ can be defined in coordinates. Note that the maps
\[
\pi_m^*: H^s(M, \otimes^m_S T^*M) \rightarrow H^s(SM) ,\quad {\pi_m}_* : H^s(SM)\rightarrow H^s(M, \otimes^m_S T^*M).
\]
are bounded for all $s \in \R$ (and they are bounded on $L^p$-spaces for $p
\geq 1$). The operator ${\pi_m}_*$ acts by duality on distributions,
namely:
\[ {\pi_m}_* : C^{-\infty}(SM) \rightarrow C^{-\infty}(M, \otimes^m_S T^*M), \qquad
\langle {\pi_m}_*f_1,f_2\rangle \coloneqq \langle f_1,\pi_m^*f_2\rangle \]
where $\langle\cdot,\cdot\rangle$ denotes the distributional pairing.  \\

\textit{The projection on solenoidal tensors.}
\label{sssection:projection-solenoidal-tensors} When $m$ is even, we denote by $\Pi_{K_m} := \langle K_m, \cdot \rangle K_m$ the orthogonal projection on $\R K_m$. We define $\Delta_m := D^*D + \eps(m)\Pi_{K_m}$, where $\eps(m) = 1$ for $m$ even, $\eps(m)=0$ for $m$ odd. The operator $\Delta_m$ is an
elliptic differential operator of order $2$ which is invertible: as a
consequence, its inverse is also pseudodifferential of order $-2$ (see \cite[Theorem 8.2]{Shubin-01}). We can
thus define the operator \be \label{equation:projection} \pi_{\ker D^*}
\coloneqq \mathbbm{1} - D\Delta_m^{-1}D^*. \ee One can check that this is
exactly the $L^2$-orthogonal projection on solenoidal tensors, it is a
pseudodifferential operator of order $0$ (as a composition of pseudodifferential operators).

Since $\sigma(D)(x,\xi) = \sigma j_\xi$, we know by
\S\ref{ssection:euclidien} that given $(x,\xi) \in T^*M$, the space
$\otimes_S^m T^*_xM$ breaks up as the direct sum
\[ \begin{split} \otimes_S^m T^*_xM & = \ran \left(\sigma(D)(x,\xi)|_{\otimes_S^{m-1} T^*_xM}\right) \oplus \ker \left(\sigma(D^*)(x,\xi)|_{\otimes_S^{m} T^*_xM}\right) \\
& = \ran \left(\sigma j_\xi|_{\otimes_S^{m-1} T^*_xM}\right) \oplus \ker \left(i_\xi|_{\otimes_S^{m} T^*_xM}\right) \end{split} \]
We recall that $\pi_{\ker i_\xi}$ is the projection on $\ker
\left(i_\xi|_{\otimes_S^{m} T^*_xM}\right)$ parallel to $\ran \left(\sigma
j_\xi|_{\otimes_S^{m-1} T^*_xM}\right)$.

\begin{lemma}
\label{lemma:projection-solenoidal} The principal symbol of $\pi_{\ker
D^*}$ is $\sigma_{\pi_{\ker D^*}}= \pi_{\ker i_\xi}$.
\end{lemma}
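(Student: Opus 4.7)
The plan is to read off the principal symbol of $\pi_{\ker D^*} = \mathbbm{1} - D\Delta_m^{-1}D^*$ using the symbol calculus for pseudodifferential operators and then identify it with $\pi_{\ker i_\xi}$ by an algebraic check on the direct sum decomposition $\otimes_S^m T_x^*M = \ran(\sigma j_\xi)\oplus \ker(i_\xi)$ recalled just above the statement.

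First I would observe that $\Pi_{K_m}$ is a rank-one operator with smooth Schwartz kernel, hence a smoothing (order $-\infty$) operator that contributes nothing to the principal symbol. Therefore the principal symbol of $\Delta_m = D^*D + \eps(m)\Pi_{K_m}$ equals that of $D^*D$, and by multiplicativity of principal symbols, together with the fact recalled in \S\ref{ssection:euclidien} that the adjoint of $\sigma j_\xi$ is $i_\xi$, one gets
\[
\sigma(\Delta_m)(x,\xi) = \sigma(D^*)(x,\xi)\,\sigma(D)(x,\xi) = i_\xi\,\sigma j_\xi
\]
acting on $\otimes_S^{m-1}T^*_xM$. Since $\Delta_m$ is elliptic and invertible, this symbol is invertible for every $\xi\neq 0$, and the inverse operator $\Delta_m^{-1}$ — which is pseudodifferential of order $-2$ — has principal symbol $(i_\xi\sigma j_\xi)^{-1}$. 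Composing once more I get
\[
\sigma\bigl(D\Delta_m^{-1}D^*\bigr)(x,\xi) = \sigma j_\xi \,(i_\xi\sigma j_\xi)^{-1}\, i_\xi.
\]

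The remaining task is to recognize the endomorphism $P(x,\xi) := \sigma j_\xi (i_\xi\sigma j_\xi)^{-1} i_\xi$ of $\otimes_S^m T^*_xM$ as the projection onto $\ran(\sigma j_\xi)$ parallel to $\ker(i_\xi)$. This is immediate from the defining properties: $P$ vanishes on $\ker(i_\xi)$ because of the rightmost factor $i_\xi$; and for $u = \sigma j_\xi v \in \ran(\sigma j_\xi)$ one has $(i_\xi\sigma j_\xi)^{-1} i_\xi u = (i_\xi\sigma j_\xi)^{-1}(i_\xi\sigma j_\xi)v = v$, so $Pu = \sigma j_\xi v = u$. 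Hence $\mathbbm{1}-P$ is the complementary projection, namely the projection onto $\ker(i_\xi)$ parallel to $\ran(\sigma j_\xi)$, which is exactly $\pi_{\ker i_\xi}$. This proves $\sigma(\pi_{\ker D^*})(x,\xi) = \pi_{\ker i_\xi}$.

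There is no real obstacle here — the only thing to watch is the bookkeeping of the adjoint (ensuring the symbol of $D^*$ is $i_\xi$ rather than $-i_\xi$, which the conventions of \S\ref{ssection:euclidien} dictate), and the verification that $\Pi_{K_m}$ is genuinely smoothing so that it drops out of the principal symbol computation. Both are straightforward.
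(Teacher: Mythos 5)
Your proof is correct and takes essentially the same route as the paper: compute the principal symbol of $D\Delta_m^{-1}D^*$ by multiplicativity of symbols (using that $\Pi_{K_m}$ is smoothing) and identify it as the projection onto $\ran(\sigma j_\xi)$ with kernel $\ker(i_\xi)$. The only cosmetic difference is that you verify the projection property directly from the explicit formula $\sigma j_\xi(i_\xi\sigma j_\xi)^{-1}i_\xi$, whereas the paper first shows the symbol is idempotent by squaring $D\Delta_m^{-1}D^*$ modulo smoothing operators and then reads off the range and kernel.
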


\begin{proof}
First, observe that:
\[ 
\begin{split}
D\Delta_m^{-1}D^*D\Delta_m^{-1}D^* & = D\Delta_m^{-1}(\Delta_m - \eps(m)\Pi_{K_m})\Delta_m^{-1}D^* \\
& = D \Delta_m^{-1} D^* - \eps(m) D \Delta_m^{-1} \Pi_{K_m} \Delta_m^{-1} D^*
\end{split}
\]
The second operator is smoothing so at the principal symbol level
\[ \sigma_{(D\Delta_m^{-1}D^*)^2} = \sigma_{D\Delta_m^{-1}D^*}^2 = \sigma_{D \Delta_m^{-1}D^*}, \]
which implies that $\sigma_{D\Delta_m^{-1}D^*}$ is a projection. Moreover, $\sigma_{D\Delta_m^{-1}D^*} = \sigma_D \sigma_{\Delta_m^{-1}} \sigma_{D^*}
= \sigma j_\xi \sigma_{\Delta_m^{-1}} i_\xi$, so it is the projection onto
$\ran \sigma j_\xi$ with kernel $\ker i_\xi$. Since $\pi_{\ker D^*} =
\mathbbm{1} - D\Delta_m^{-1}D^*$, the result is immediate.
\end{proof}

\section{On Livsic-type theorems}

\label{section:livsic}


We will denote by $d : \mc{M} \times \mc{M} \rightarrow \R$ the Riemannian
distance on $\mc{M}$ inherited from the Riemannian metric $g$. The
$\alpha$-Hölder norm of $f$ is defined by:
\[ \|f\|_{C^\alpha} \coloneqq \sup_{x \in \mc{M}} |f(x)| + \sup_{x,y \in \mc{M}, x \neq y} \dfrac{|f(x)-f(y)|}{d(x,y)^\alpha} = \|f\|_\infty + \|f\|_\alpha \]
In a series of inequalities, we will sometimes write $A \lesssim B$ to
denote the fact that there exists a universal constant $C > 0$ such that $A
\leq C \cdot B$. Note that a constant $C > 0$ may still appear from time to
time and, as usual, it may change from one line to another.

\subsection{Properties of Anosov flows}

We refer to the exhaustive \cite{Hasselblatt-Katok-95} and the forthcoming
book \cite{Fisher-Hasselblatt} for an introduction to hyperbolic dynamics.

\subsubsection{Stable and unstable manifolds}

The \textit{global stable} and \textit{unstable manifolds} $W^s(x), W^u(x)$
are defined by:
\[ \begin{array}{c} W^s(x) = \left\{ x' \in \mc{M}, d(\varphi_t(x), \varphi_t(x')) \rightarrow_{t \rightarrow +\infty} 0 \right\} \\
W^u(x) = \left\{ x' \in \mc{M}, d(\varphi_t(x), \varphi_t(x')) \rightarrow_{t \rightarrow -\infty}  0 \right\} \end{array}\]

For $\varepsilon > 0$ small enough, we define the \textit{local stable} and
\textit{unstable manifolds} $W^s_\varepsilon(x) \subset W^s(x),
W^u_\varepsilon(x) \subset W^u(x)$ by:
\[ \begin{array}{c} W^s_\varepsilon(x) = \left\{ x' \in W^s(x), \forall t \geq 0, d(\varphi_t(x), \varphi_t(x')) \leq \varepsilon \right\} \\
W^u_\varepsilon(x) = \left\{ x' \in W^u(z), \forall t \geq 0,
d(\varphi_{-t}(x), \varphi_{-t}(x')) \leq \varepsilon \right\} \end{array}\]
For all $\eps > 0$ small enough, there exists $t_0 > 0$ such that: \be
\label{eq:inc} \forall x \in \mc{M}, \forall t \geq t_0,
\varphi_t(W^s_{\eps}(x)) \subset W^s_{\eps}(\varphi_t(x)),
\varphi_{-t}(W^u_{\eps}(x)) \subset W^u_{\eps}(\varphi_{-t}(x)) \ee And:
\[ T_x W^s_{\eps}(x) = E_s(x), ~~~ T_x W^u_{\eps}(x) = E_u(x) \]

\subsubsection{Classical properties}

%
%
%

The main tool we will use to construct suitable periodic orbits is the
following classical shadowing property of Anosov flows. Part of the proof can
be found in \cite[Corollary 18.1.8]{Hasselblatt-Katok-95} and \cite[Theorem
5.3.2]{Fisher-Hasselblatt}. The last bound is a consequence of hyperbolicity
and can be found in \cite[Proposition 6.2.4]{Fisher-Hasselblatt}. For the
sake of simplicity, we will write $\gamma=[xy]$ if $\gamma$ is an orbit
segment with endpoints $x$ and $y$.
\begin{theorem}
\label{thm:shadowing} There exist $\eps_0>0$, $\theta>0$ and $C>0$ with the
following property. Consider $\eps<\eps_0$, and a finite or infinite sequence
of orbit segments $\gamma_i = [x_iy_i]$ of length $T_i$ greater than $1$ such
that for any $n$, $d(y_n,x_{n+1}) \leq \eps$. Then there exists a genuine
orbit $\gamma$ and times $\tau_i$ such that $\gamma$ restricted to $[\tau_i,
\tau_i+T_i]$ shadows $\gamma_i$ up to $C\eps$. More precisely, for all $t\in
[0, T_i]$, one has
\begin{equation}
\label{eq:d_hyperbolic}
  d(\gamma(\tau_i+t), \gamma_i(t)) \leq C \eps e^{-\theta \min(t, T_i-t)}.
\end{equation}
Moreover, $\abs{\tau_{i+1} - (\tau_i + T_i)} \leq C \eps$. Finally, if the
sequence of orbit segments $\gamma_i$ is periodic, then the orbit $\gamma$ is
periodic.
\end{theorem}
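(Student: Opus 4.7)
The plan is to build on the standard local product structure for Anosov flows, combined with an iterative bracketing construction. The key preliminary tool is the following: for $\eps$ small enough, any two points $p, q \in \mc{M}$ with $d(p,q) \leq \eps$ admit a unique ``bracket'' $[p,q]$ obtained as the transverse intersection of $W^s_{C\eps}(p)$ with $\varphi_{(-C\eps, C\eps)}(W^u_{C\eps}(q))$. This is a direct consequence of the transversality of the splitting $\R X \oplus E_s \oplus E_u$ together with the implicit function theorem, and yields a time shift $\tau(p,q) = O(\eps)$ which depends Lipschitz-continuously on $(p,q)$. The uniform hyperbolicity bounds \eqref{equation:hyperbolicite} then guarantee that stable displacements of size $\delta$ contract to size $C\delta e^{-\lambda t}$ after time $t$, and symmetrically for unstable displacements under backward flow.

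To produce the shadowing orbit I would perform successive corrections along the pseudo-orbit. Starting near $x_1$, flowing for time $T_1$ reaches a point near $y_1$; applying the bracket with $x_2$ shifts to a point lying on the local stable manifold of (a nearby candidate for) $x_2$, producing a time reparametrization of size $O(\eps)$ and yielding $|\tau_{i+1} - (\tau_i + T_i)| \leq C\eps$. For an infinite sequence, I would reformulate this iteration as a fixed-point problem on a Banach space of sequences of stable/unstable displacements parametrizing candidate orbits, equipped with a weighted $\ell^\infty$ norm matched to the contraction rate $\lambda$. Because each $T_i \geq 1$, the bracket-correction map becomes a contraction with factor $\lesssim e^{-\lambda}$, giving existence and uniqueness of the shadowing orbit.

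Once $\gamma$ is constructed, the exponential estimate \eqref{eq:d_hyperbolic} follows by splitting the error at time $\tau_i+t$ into its $E_s \oplus \R X$ and $E_u$ components. The stable-plus-flow part of the displacement $\gamma(\tau_i+t) - \gamma_i(t)$ is inherited from the bracket correction at the beginning of the $i$-th segment, hence has size at most $C\eps e^{-\lambda t}$ by forward contraction; the unstable part originates from the bracket correction at the end of the segment and has size at most $C\eps e^{-\lambda(T_i-t)}$ by reversing time. Summing gives the claimed bound $C\eps e^{-\theta \min(t, T_i - t)}$ with $\theta = \lambda$, which is best in the middle of each segment and degrades only near the transitions. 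For the periodic case, if the input sequence has period $N$, the shifted sequence is the same input, so by uniqueness of the fixed point the shadowing orbit must itself be periodic with matching period.

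The main obstacle, as always in shadowing arguments for flows rather than maps, is the bookkeeping of time reparametrizations: the corrections simultaneously modify the spatial base point and shift the clock by $O(\eps)$, and one must verify that these two effects remain compatible uniformly in the (possibly infinite) length of the pseudo-orbit. This is precisely where the exponential decay in \eqref{eq:d_hyperbolic} is essential rather than a mere $O(\eps)$ bound: it ensures the accumulated spatial and temporal errors across segments form geometric series that converge uniformly, and it also gives the Hölder-type estimates that will be used downstream in the Livsic arguments of the paper.
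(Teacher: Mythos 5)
The paper does not provide its own proof of Theorem~\ref{thm:shadowing}; it refers the reader to \cite{Hasselblatt-Katok-95} and \cite{Fisher-Hasselblatt}, since this is a classical refinement of the Anosov closing/shadowing lemma. Your sketch — local product structure, bracket corrections at the junctions, and a fixed-point argument on a weighted $\ell^\infty$ sequence space with contraction factor $e^{-\lambda}$ coming from the assumption $T_i \geq 1$ — is the standard route taken in those references, and the periodicity-by-uniqueness argument is also standard. So the overall structure of your argument is correct.

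However, your justification of the exponential bound~\eqref{eq:d_hyperbolic} contains an imprecision that, read literally, would invalidate the estimate. You write that ``the stable-plus-flow part of the displacement \ldots has size at most $C\eps e^{-\lambda t}$ by forward contraction,'' but the flow direction does \emph{not} contract: a time offset between two genuine orbits is constant along a segment. If the flow component of the displacement were merely $O(\eps)$, the distance in the middle of a long segment would be $O(\eps)$, not $O(\eps e^{-\theta T_i/2})$. The reason the bound nevertheless holds is that the flow component is absorbed entirely into the choice of $\tau_i$: the bracket at each junction should be taken between the weak-stable leaf $W^{ws}_{\text{loc}}$ (which incorporates the flow direction) and the strong-unstable leaf $W^{uu}_{\text{loc}}$, producing a time shift that goes into $\tau_{i+1}-(\tau_i+T_i)$ and a residual displacement lying purely in $E_s\oplus E_u$. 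Only those transverse coordinates then need to be propagated through the fixed-point argument, and they do contract forward (stable) and backward (unstable), summing to the claimed $C\eps e^{-\theta\min(t,T_i-t)}$. Your final paragraph correctly identifies the time-reparametrization bookkeeping as the main difficulty, but the paragraph where you actually derive~\eqref{eq:d_hyperbolic} does not carry out this separation; it should.
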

\begin{remark}
In this theorem, we could also allow the first orbit segment $\gamma_i$ to be
infinite on the left, and the last orbit segment $\gamma_j$ to be infinite on
the right. In this case,~\eqref{eq:d_hyperbolic} should be replaced by its
obvious reformulation: assuming that $\gamma_i$ is defined on $(-\infty, 0]$
and $\gamma_j$ on $[0,+\infty)$, we would get for some $\tilde\tau_{i+1}$
within $C\eps$ of $\tau_{i+1}$, and all $t\geq 0$
\begin{equation}
\label{eq:d_hyperbolic_neg}
  d(\gamma(\tilde\tau_{i+1}-t), \gamma_i(-t)) \leq C \eps e^{-\theta t}
\end{equation}
and
\begin{equation*}
  d(\gamma(\tau_{j}+t), \gamma_j(t)) \leq C \eps e^{-\theta t}.
\end{equation*}
\end{remark}

In particular, if $\gamma_0$ is an orbit segment $[xy]$ with $d(y, x)\leq
\eps_0$, then applying the above theorem to $\gamma_i \coloneqq \gamma_0$ for
all $i\in \Z$, one gets a periodic orbit that shadows $\gamma_0$: this is the
\emph{Anosov closing lemma}. We will also use thoroughly the version with two
orbit segments that are repeated to get a periodic orbit.

\subsubsection{Cover by parallelepipeds}

\label{ssect:paral}

We will now fix $\eps_0$ small enough so that the previous propositions are
guaranteed. For $\eps \leq \eps_0$, we define the set $W_\eps(x) \coloneqq \bigcup_{y \in W^{u}_\eps(x)} W^{s}_\eps(x)$. We can cover the manifold $\mc{M}$ by a finite union of flow boxes $\mc{U}_i := \cup_{t\in (-\delta,\delta)} \varphi_t(\Sigma_i)$, where $\Sigma_i := W_{\eps_0}(x_i)$ and $x_i \in \mc{M}$.

We denote by $\pi_i : \mc{U}_i \rightarrow \Sigma_i$ the projection by the flow
on the transverse section and we define $\mathfrak{t}_i : \mc{U}_i \rightarrow
\R$ such that $\pi_i(x) = \varphi_{\mathfrak{t}_i(x)}(x)$ for $x \in \mc{U}_i$.
We will need the following lemma:

\begin{lemma}
\label{lemma:holder-pit} $\pi_i, \mathfrak{t}_i$ are Hölder-continuous.
\end{lemma}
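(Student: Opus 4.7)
The plan is to invoke the classical regularity of the Anosov (weak) stable/unstable foliations and then run an implicit-function argument adapted to Hölder data. First, I would recall the classical result of Anosov (see also Hirsch--Pugh--Shub) that for a smooth Anosov flow the sub-bundles $E_s$ and $E_u$ are Hölder-continuous of some exponent $\alpha_0 \in (0,1]$; consequently, while each individual leaf $W^s_{\eps_0}(x)$ and $W^u_{\eps_0}(x)$ is a smooth embedded disc, the leaves depend Hölder-continuously on $x$ in the $C^1$-topology. A direct corollary is that the transverse section $\Sigma_i = W_{\eps_0}(x_i) = \bigcup_{y \in W^u_{\eps_0}(x_i)} W^s_{\eps_0}(y)$ is a $C^0$-submanifold of $\mc{M}$, transverse to the flow, and of $C^{\alpha_0}$ regularity in the transverse direction. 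In a smooth chart $(u,s,\tau) \in \R^{n_u}\times \R^{n_s} \times \R$ around $x_i$ chosen so that $\partial_\tau|_{x_i} = X(x_i)$ and $\partial_u|_{x_i}$, $\partial_s|_{x_i}$ span $E_u(x_i)$ and $E_s(x_i)$ respectively, $\Sigma_i$ is (up to shrinking $\eps_0$) the graph $\tau = h(u,s)$ of a function $h \in C^{\alpha_0}$ with $h(0,0)=0$ and $h(u,s) = O(|(u,s)|^{\alpha_0})$.

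Next, I would write the defining equation for $\mathfrak{t}_i$ as an implicit equation. Denoting by $u(y), s(y), \tau(y)$ the coordinates of $y \in \mc{U}_i$, the point $\mathfrak{t}_i(x)$ is characterized by $F(\mathfrak{t}_i(x), x) = 0$, where
\[
F(t,x) \coloneqq \tau(\varphi_t(x)) - h\bigl(u(\varphi_t(x)), s(\varphi_t(x))\bigr).
\]
Since the flow is smooth and $h \in C^{\alpha_0}$, the map $F$ is Lipschitz in $t$ (uniformly in $x$) and Hölder in $x$ of exponent $\alpha_0$ (uniformly in $t$, on compact subsets). Because $\Sigma_i$ is transverse to $X$ at $x_i$ we have $\partial_t F(0, x_i) = 1$, and shrinking $\mc{U}_i$ if necessary, there is a constant $c>0$ such that
\[
\bigl|F(t_1, x) - F(t_2, x)\bigr| \geq c\,|t_1 - t_2|
\]
for all $(t_1, x), (t_2, x)$ in the relevant range. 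Subtracting the equations $F(\mathfrak{t}_i(x), x) = 0 = F(\mathfrak{t}_i(y), y)$ and using the triangle inequality,
\[
c\,|\mathfrak{t}_i(x) - \mathfrak{t}_i(y)| \leq \bigl|F(\mathfrak{t}_i(y), x) - F(\mathfrak{t}_i(y), y)\bigr| \leq C\, d(x,y)^{\alpha_0},
\]
which proves $\mathfrak{t}_i \in C^{\alpha_0}(\mc{U}_i)$. The Hölder regularity of $\pi_i$ then follows immediately: $\pi_i(x) = \varphi_{\mathfrak{t}_i(x)}(x)$ is the composition of the smooth map $(t,x)\mapsto \varphi_t(x)$ with the $\alpha_0$-Hölder map $x \mapsto (\mathfrak{t}_i(x), x)$, hence belongs to $C^{\alpha_0}$ on $\mc{U}_i$.

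The only non-routine step is the Hölder graph representation of $\Sigma_i$, which reduces to the classical Hölder regularity of the invariant distributions $E_s, E_u$ of an Anosov flow. Once this is granted, the remainder is a textbook implicit-function argument exploiting the smoothness (in particular the $C^1$ transversality) of the flow in the $\tau$-direction against the merely Hölder behaviour in the transverse directions.
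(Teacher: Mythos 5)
Your overall strategy — express $\Sigma_i$ as a Hölder graph over the tangent hyperplane and solve $F(t,x)=0$ for $\mathfrak{t}_i$ — is sound and matches the spirit of the paper's proof, and the graph representation itself is correct. The gap is in the implicit-function step, and it stems from the chart you chose: it only satisfies $\partial_\tau|_{x_i} = X(x_i)$ at the single base point. Away from $x_i$, the transverse coordinates $(u,s)(\varphi_t(x))$ are genuinely $t$-dependent, so the term $h\bigl(u(\varphi_t(x)), s(\varphi_t(x))\bigr)$ in $F(t,x)$ is only $\alpha_0$-Hölder in $t$ (a Hölder function post-composed with a smooth map stays Hölder, not Lipschitz). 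Hence $F$ is not Lipschitz in $t$, $\partial_t F$ need not exist, and the lower bound $|F(t_1,x)-F(t_2,x)| \geq c|t_1-t_2|$ fails. Quantitatively, for $t_2>t_1$ one only has
\[
F(t_2,x)-F(t_1,x) \ \geq\ (1-C\delta)(t_2-t_1) - C'\,\delta^{\alpha_0}\,(t_2-t_1)^{\alpha_0},
\]
and when $|t_2-t_1|$ is small the negative Hölder term dominates the linear one, no matter how small the flow-box radius $\delta$ is taken; the self-improving inequality $|\mathfrak{t}_i(x)-\mathfrak{t}_i(y)| \lesssim d(x,y)^{\alpha_0} + \delta^{\alpha_0}\,|\mathfrak{t}_i(x)-\mathfrak{t}_i(y)|^{\alpha_0}$ that results does not close.

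The repair is simple and in fact makes the argument shorter: use genuine flow-box coordinates, i.e.\ straighten the flow so that $X \equiv \partial_\tau$ throughout $\mc{U}_i$ (not merely at $x_i$). Then $(u,s)(\varphi_t(x)) \equiv (u,s)(x)$ is independent of $t$, so $F(t,x) = \tau(x)+t-h\bigl(u(x),s(x)\bigr)$ is affine in $t$ with slope $1$, and one reads off $\mathfrak{t}_i(x) = h\bigl(u(x),s(x)\bigr) - \tau(x)$, manifestly $\alpha_0$-Hölder; the regularity of $\pi_i = \varphi_{\mathfrak{t}_i(\cdot)}(\cdot)$ then follows exactly as you say. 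This is essentially what the paper does in a slightly different packaging: it first projects by the flow onto a \emph{smooth} transverse section $S$ via the smooth maps $\pi_S, \mathfrak{t}_S$ coming from the $C^1$ implicit function theorem, and only afterwards composes with the Hölder projection from $S$ to $\Sigma_i$, thereby cleanly separating the smooth flow direction from the merely Hölder transverse direction before the Hölder regularity of the invariant foliations is invoked.
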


\begin{proof}
This is actually a general fact related to the Hölder regularity of the
foliation and the smoothness of the flow.

For the sake of simplicity, we drop the index $i$ in this proof. Let us
first prove the Hölder continuity for $x$ close to $\Sigma$ and $x'$ close
to $x$. We fix $p \in \Sigma$ and choose smooth local coordinates $\psi :
B(p,\eta) \rightarrow \R^{n+1} = \R \times \R^{n_s} \times \R^{n_u}$ around
$p$ (and centered at $0$) so that $d\psi_{p}(X) = \partial_{x_0}$. This
choice guarantees that in a neighborhood of $0$, the flow is transverse to
the hyperplane $\left\{0\right\}\times\R^{n_s+n_u}$. We still denote by
$\Sigma_\eta$ its image $\psi(\Sigma_\eta) \subset \R^{n+1}$, which is a
submanifold of Hölder regularity (the index $\eta$ indicates that we
consider the same objects intersected with the ball $B(x,\eta)$). Moreover,
there exists a Hölder-continuous homeomorphism $\Phi : S \rightarrow
\Sigma_\eta$, where $S \subset \left\{0\right\} \times\R^{n_s+n_u}$ (since
$\Sigma_\eta$ is a submanifold of $M$ with Hölder regularity). We consider
$\hat{\varphi} : (-\delta, \delta) \times S \rightarrow
\varphi_{(-\delta,\delta)}(S) =: V \supset \Sigma_\eta$ defined by
$\hat{\varphi}(t,z) = \varphi_t(0,z)$, which is a smooth diffeomorphism.
Remark that $\mathfrak{t}$ satisfies for $(0,z) \in S$,
$(\mathfrak{t}(z),z) = \hat{\varphi}^{-1}(\Phi(z))$. So it is
Hölder-continuous on $S$. Then $z \mapsto \pi(0,z) =
\varphi_{\mathfrak{t}(z)}(0,z)$ is Hölder-continuous on $S$ too.

We denote by $\pi_S : V \rightarrow S$ the projection and by
$\mathfrak{t}_S : V \rightarrow S$ the time such that $\pi_S(x) =
\varphi_{\mathfrak{t}_S(x)}(x)$. These two maps are smooth by the implicit
function theorem since the flow is transverse to $S$. Moreover, we have:
$\pi(x) = \pi|_S\left(\pi_S(x)\right)$ so $\pi$ is Hölder-continuous. And
$\mathfrak{t}(x) = \mathfrak{t}_S(x)+\mathfrak{t}|_S(\pi_S(x))$ so
$\mathfrak{t}$ is Hölder-continuous too. Note that by compactness of
$\Sigma$, this procedure can be done with only a finite number of charts,
thus ensuring the uniformity of the constants. Thus, $\pi_i,
\mathfrak{t}_i$ are Hölder-continuous in a neighborhood of $\Sigma$. Now,
in order to obtain the continuity on the whole cube $\mc{U}$, one can repeat the
same argument i.e., write the projection as the composition of a first
projection on a smooth small section $S$ defined in a neighborhood of
$\Sigma$ with the actual projection on $\Sigma$. This provides the sought
result.

\end{proof}

\subsection{Proof of the approximate Livsic theorem}

We now deal with the proof of Theorem~\ref{theorem:livsic-approche}.

\subsubsection{A key lemma.}

The following lemma states that we can find a sufficiently dense and yet
separated orbit in the manifold $\M$. The separation can only hold
transversally to the flow direction, and is defined as follows. Recall that $W_\eps(x) \coloneqq \bigcup_{y \in W^{u}_\eps(x)} W^{s}_\eps(x)$. Then we
say that a set $S$ is \emph{$\eps$-transversally separated} if, for any $x \in S$,
we have $S \cap W_\eps(x) = \{x\}$.

\begin{lemma}
\label{lemma:bonne-orbite} Consider a transitive Anosov flow on a compact
manifold. There exist $\beta_s, \beta_d>0$ such that the following holds. Let
$\eps > 0$ be small enough. There exists a periodic orbit $\mc{O}(x_0)
\coloneqq (\varphi_t x_0)_{0 \leq t \leq T}$ with $T \leq \eps^{-1/2}$ such
that this orbit is $\eps^{\beta_s}$-transversally separated and $(\varphi_t
x_0)_{0 \leq t \leq T-1}$ is $\eps^{\beta_d}$-dense. If $\kappa > 0$ is some
fixed constant, then one can also require that there exists a piece of
$\mc{O}(x_0)$ of length $\leq C(\kappa)$ which is $\kappa$-dense in the
manifold.
\end{lemma}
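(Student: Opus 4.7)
The plan is to realize $\mc{O}(x_0)$ as the shadow, via Theorem~\ref{thm:shadowing}, of a pseudo-orbit cycling through a carefully chosen finite set of \emph{pivots} $y_1,\ldots,y_N \in \mc{M}$. Fix $\beta_d>0$ with $(n+1)\beta_d<1/2$ and take $\{y_i\}$ to be a maximal $\eps^{\beta_d}$-separated subset of $\mc{M}$; it is automatically $2\eps^{\beta_d}$-dense with $N \leq C\eps^{-(n+1)\beta_d}$. By topological transitivity of the Anosov flow one produces a uniform constant $L>0$ such that for any two points $y,y'\in \mc{M}$ there exists $\tau\in[1,L]$ with $d(\varphi_\tau(y),y')\leq \eps_0/2$, where $\eps_0$ is the threshold of Theorem~\ref{thm:shadowing}. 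For the ``moreover'' clause, transitivity yields separately, for each fixed $\kappa>0$, an orbit segment $\gamma_\kappa$ of some finite length $L(\kappa)$ that is $\kappa/2$-dense in $\mc{M}$.

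For each $i\in\{1,\ldots,N\}$ I would pick $\tau_i \in [1,L]$ with $d(\varphi_{\tau_i}(y_i),y_{i+1}) \leq \eps_0/2$ (indices mod $N$), and concatenate the segments $\gamma_i \coloneqq [y_i, \varphi_{\tau_i}(y_i)]$ into a cyclic pseudo-orbit, inserting $\gamma_\kappa$ between $\gamma_N$ and $\gamma_1$. Theorem~\ref{thm:shadowing} then produces a genuine periodic orbit $\mc{O}(x_0)$ of period $T \leq NL + L(\kappa) \leq C\eps^{-(n+1)\beta_d}$, which is $\leq \eps^{-1/2}$ for $\eps$ small. The exponential shadowing bound~\eqref{eq:d_hyperbolic} ensures that $\mc{O}(x_0)$ passes within $O(\eps_0)$ of each $y_i$, giving $\eps^{\beta_d}$-density after shrinking $\beta_d$ slightly, and that it contains an actual shadow of $\gamma_\kappa$ of length $\leq L(\kappa)+O(1) =: C(\kappa)$ which inherits $\kappa$-density.

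The hard part is the $\eps^{\beta_s}$-transversal separation. Points on $\mc{O}(x_0)$ lying in shadowing range of distinct pivots $y_i \neq y_j$ are transversally separated by at least $\eps^{\beta_d}-O(\eps_0) \geq \eps^{\beta_s}$ as soon as $\beta_s > \beta_d$, so the only residual concern is accidental transverse near-returns within a single connecting segment $\gamma_i$, or between two different $\gamma_i$'s at scales well below $\eps^{\beta_d}$. My plan for handling these is to exploit the freedom to perturb each $\tau_i$ within $[1,L]$: the family of admissible periodic orbits is parametrized by a cube of dimension $N$, a transverse coincidence at scale $\eps^{\beta_s}$ at two prescribed times is a codimension-$n$ condition in these parameters, and a crude volume count bounds the measure of ``bad'' parameters by $CT^2\eps^{n\beta_s}$, which is $o(1)$ provided $\beta_s$ is chosen large enough relative to $\beta_d$. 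A measure-theoretic pigeonhole then delivers an admissible configuration.

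I expect the main obstacle to be making this last volume estimate rigorous: one must show that perturbing a single $\tau_i$ produces a genuine transverse displacement of the shadow orbit (rather than a trivial time reparametrization). Here the flow-box description of \S\ref{ssect:paral} and the exponential estimate~\eqref{eq:d_hyperbolic} both intervene: shifting $\tau_i$ by $s$ moves $\varphi_{\tau_i}(y_i)$ along the flow by $s$, and shadowing propagates this into a controlled transverse shift on each subsequent Poincaré section, reducing the bad-parameter count to a covering count on transverse sections of size $\eps^{\beta_s}$.
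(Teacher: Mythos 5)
There are two genuine gaps. The first is quantitative: with connecting segments of bounded length $\leq L$ and pseudo-orbit gaps of size $\eps_0/2$, the shadowing estimate~\eqref{eq:d_hyperbolic} only guarantees that the resulting periodic orbit passes within $O(\eps_0)$ of each pivot $y_i$, and $O(\eps_0)$ is a \emph{constant}, not a power of $\eps$; shrinking $\beta_d$ cannot rescue the $\eps^{\beta_d}$-density claim, because for any fixed $\beta_d>0$ one eventually has $\eps^{\beta_d}\ll\eps_0$. To get power-of-$\eps$ proximity to the $y_i$'s one needs either pseudo-orbit gaps of size $\sim\eps$, or the $y_i$ sitting in the middle of a segment of length $\gtrsim|\log\eps|$; either way the connecting pieces must have length $O(|\log\eps|)$, which is exactly why the paper's pieces $\gamma_{x_i}$ have length $6C_1|\log\eps|+O(1)$ and the gluing errors are $O(\eps)$.

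The second gap is structural and is the one I do not see how to close along your lines. Your separation argument perturbs the $\tau_i$'s and invokes a codimension-$n$ volume count, but each $\tau_i$ is constrained to an interval of size $O(\eps_0)$ (to keep the gaps below the shadowing threshold) and moves the endpoint of $\gamma_i$ only in the flow direction; the induced \emph{transverse} displacement of the shadow orbit at time-distance $d$ from junction $i$ is $O(e^{-\theta d})$ and lies along a one-dimensional direction determined by the hyperbolic splitting. Nothing guarantees that the map from $(\tau_1,\ldots,\tau_N)$ to the pair of transverse coordinates of $\gamma(t),\gamma(t')$ has rank $2n$, and when $|t-t'|=O(1)$ the responses of $\gamma(t)$ and $\gamma(t')$ to the same perturbation are strongly correlated, so the ``codimension $n$'' count is unjustified. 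Worse, each $\gamma_i$ is an honest orbit segment of $y_i$: a transverse near-return \emph{within} $\gamma_i$ is inherited by the shadow and cannot be removed by any admissible perturbation of the $\tau_j$'s, and a maximal separated net $\{y_i\}$ gives you no control over the geometry of its orbits. The paper resolves separation by an entirely deterministic device: the $x_i$ are chosen \emph{greedily} (each new $x_i$ outside the $\eps^{\beta_d}/2$-neighborhood of the previously built pieces), and each piece is forced through a fixed ``marker'' configuration — the heteroclinic loop between $\mc{O}(p_1)$ and $\mc{O}(p_2)$ through the transversal $W_{3\rho_0}(z_0)$ — which synchronizes the shadows and lets one read off which piece a given point belongs to, reducing separation to the greedy property with no measure-theoretic input. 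If you want to make a probabilistic argument work, you would at minimum need to randomize over the $y_i$'s as well and prove a quantitative transversality statement for the shadowing map; this seems substantially harder than the marker trick.
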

\begin{proof}
We could give a combinatorial construction in terms of Markov partitions and
carefully chosen sequences of symbols in the symbolic dynamics representation
of the flow. However, controlling rigorously the boundary effects on
separation is delicate. Instead, we give a geometric construction solely
based on the shadowing theorem. It is easy to obtain an
$\eps^{\beta_d}$-dense orbit by concatenating orbit segments thanks to the
shadowing Theorem. However, separation is harder to enforce. In this proof,
we introduce several constants, but none of them will depend on $\eps$.

Let us fix two periodic points $p_1$ and $p_2$ with different orbits
$\mc{O}(p_1)$ and $\mc{O}(p_2)$ of respective lengths $\ell_1$ and $\ell_2$.
By the shadowing theorem and transitivity, there exists an orbit $\gamma_-$
which is asymptotic to $\mc{O}(p_1)$ in negative time and to $\mc{O}(p_2)$ in
positive time. Also, there exists an orbit $\gamma_+$ which is asymptotic to
$\mc{O}(p_2)$ in negative time and to $\mc{O}(p_1)$ in positive time. On
$\gamma_-$, fix a point $z_0$, and $\rho_0 >0$ small enough so that $\gamma_-
\cup \gamma_+$ meets $W_{3\rho_0}(z_0)$ only at $z_0$, and $\mc{O}(p_1)$ and
$\mc{O}(p_2)$ are at distance $>3\rho_0$ of $z_0$. Denote by $C_0$ the
constant $C$ in the shadowing theorem~\ref{thm:shadowing}. Reducing $\rho_0$
if necessary, we can assume $\rho_0 < \eps_0$ where $\eps_0$ is given by
Theorem~\ref{thm:shadowing}. Let us also fix a large constant $C_1$, on which
our construction will depend.

We truncate $\gamma_-$ in positive time, stopping it at a fixed time where it
is within distance $\rho_0/(2C_0)$ of $p_2$, to get an orbit $\gamma'_-$. Let
$t_-$ be the largest time in $(-\infty, -2C_1\abs{\log\eps}]$ where
$\gamma'_-(t)$ is within distance $\eps$ of $p_1$. As the orbit $\gamma'_-$
converges exponentially quickly in negative time to $\mc{O}(p_1)$ by
hyperbolicity, one has $d(\gamma'_-(t), \mc{O}(p_1)) \leq \eps$ for $t \leq
-2C_1\abs{\log\eps}$, if $C_1$ is large enough. Hence, one needs to wait at
most $\ell_1$ before being $\eps$-close to $p_1$. This shows that the time
$t_-$ satisfies $t_- = -2C_1\abs{\log\eps}+O(1)$.

In the same way, we truncate $\gamma_+$ in negative time at a fixed time for
which it is within distance $\rho_0/(2C_0)$ of $p_2$, obtaining an orbit
$\gamma'_+$. We denote by $t_+$ the smallest time in $[2C_1\abs{\log\eps},
+\infty)$ with $d(\gamma'_+(t), p_2) \leq \eps$. It satisfies $t_+ =
2C_1\abs{\log\eps} + O(1)$.

As the flow is transitive, it has a dense orbit. Therefore, for any $x, y$,
there exists an orbit $\gamma_{x,y}$ starting from a point within distance
$\rho_0/(2C_0)$ of $x$, ending at a point within distance $\rho_0/(2C_0)$ of
$y$, and with length $\in [1, T_0]$ where $T_0$ is fixed and independent of
$x$ and $y$.

To any $x$, we associate an orbit as follows. Start with $\gamma'_-$, then
follow $\gamma_{p_2, \varphi_{- C_1\abs{\log \eps}} x}$, then follow the
orbit of $x$ between times $-C_1\abs{\log \eps}$ and $C_1\abs{\log \eps}$,
then follow $\gamma_{\varphi_{C_1\abs{\log \eps}}x, p_2}$, then follow
$\gamma'_+$. In this sequence, the distance between an endpoint of a piece
and the starting point of the next one is always less than $\rho_0/C_0$.
Hence, Theorem~\ref{thm:shadowing} applies and yields an infinite orbit
$\gamma'_x$, that follows the above pieces of orbits up to $\rho_0$. If $C_1$
is large enough,~\eqref{eq:d_hyperbolic} implies that $x$ is within distance
at most $\eps$ of $\gamma'_x$. The inequality~\eqref{eq:d_hyperbolic_neg}
shows that $\gamma'_-(t_-)$ and the corresponding point $x_-$ on $\gamma'_x$
are within distance $e^{-\theta t_-}$. If $C_1$ is large enough, this is
bounded by $\eps$ since $t_- = -2C_1 \abs{\log\eps} + O(1)$. Therefore,
$d(x_-, p_1) \leq 2 \eps$. In the same way, the point $x_+$ on $\gamma'_x$
matching $\gamma'_+(t_+)$ is within distance $\eps$ of $\gamma'_+(t_+)$, and
therefore within distance $2\eps$ of $p_1$. Let us truncate $\gamma'_x$
between $x_-$ and $x_+$, to get an orbit segment $\gamma_x$ of length
$6C_1\abs{\log \eps} + O(1)$, starting and ending within $2\eps$ of $p_1$,
and passing within $\eps$ of $x$.

Let $\beta_d = 1/(3\dim(\mc{M}))$. We define a sequence of points of $\mc{M}$
as follows. Let $x_1$ be an arbitrary point for which the
$C(\kappa)$-beginning of its orbit is $\kappa/2$-dense, to guarantee in the
end that the last condition of the lemma is satisfied. If $\gamma_{x_1}$ is
not $\eps^{\beta_d}/2$-dense, we choose another point $x_2$ which is not in
the $\eps^{\beta_2}/2$-neighborhood of $\gamma_{x_1}$. Then $\gamma_{x_1}\cup
\gamma_{x_2}$ contain both $x_1$ and $x_2$ in their $\eps$-neighborhood, and
therefore in their $\eps^{\beta_d}/2$-neighborhood. If $\gamma_{x_1}\cup
\gamma_{x_2}$ is still not $\eps^{\beta_d}/2$-dense, then we add a third
piece of orbit $\gamma_{x_3}$, and so on. By compactness, this process stops
after finitely many steps, giving a finite sequence $x_1,\dotsc, x_N$.

As all $\gamma_{x_i}$ start and end with $p_1$ up to $2\eps$, we can glue the
sequence
\begin{equation*}
  \dotsc, \gamma_{x_N},\gamma_{x_1}, \gamma_{x_2},\dotsc, \gamma_{x_N}, \gamma_{x_1},\dotsc
\end{equation*}
thanks to Theorem~\ref{thm:shadowing}. We get a periodic orbit $\gamma$ which
shadows them up to $2C_0 \eps$. We claim this orbit satisfies the
requirements of the lemma. We should check its length, its density, and its
separation.

Let us start with the length. The points $x_i$ are separated by at least
$\eps^{\beta_d}/3$. The balls of radius $\eps^{\beta_d}/6$ are disjoint, and
each has a volume $\geq c \eps^{\beta_d \cdot \dim(\mc{M})} = c \eps^{1/3}$.
We get that the number $N$ of points $x_i$ is bounded by $C \eps^{-1/3}$. As
each piece $\gamma_{x_i}$ has length at most $C \abs{\log \eps}$, it follows
that the total length of $\gamma$ is bounded by $C \abs{\log \eps}
\eps^{-1/3} \leq \eps^{-1/2}$.

Let us check the density. By construction, the union of the $\gamma_{x_i}$ is
$\eps^{\beta_d}/2$-dense. As $\gamma$ approximates each $\gamma_{x_i}$ within
$2C_0\eps$, it follows that $\gamma$ is $2C_0\eps+\eps^{\beta_d}/2$ dense,
and therefore $\eps^{\beta_d}$-dense. In the statement of the lemma, we
require the slightly stronger statement that if one removes a length $1$
piece at the end of the orbit it remains $\eps^{\beta_d}$-dense. Such a
length $1$ piece in $\gamma_{x_N}$ consists of points that are within $2\eps$
of $\mc{O}(p_1)$. They are approximated within $\eps^{\beta_d}$ by the start
and end of all the other $\gamma_{x_i}$.

Finally, let us check the more delicate separation, which has motivated the
finer details of the construction as we will see now. Let $\beta_s$ be
suitably large. We want to show that any two points $x,y$ of $\gamma$ within
distance $\eps^{\beta_s}$ are on the same local flow line. Since the
expansion of the flow is at most exponential, for any $t \leq 20C_1 \abs{\log
\eps}$, we have $d(\varphi_t x, \varphi_t y) \leq \eps$ if $\beta_s$ is large
enough. In the piece of $\gamma$ of length $10C_1 \abs{\log\eps}$ starting at
$x$, there is an interval $[t_1, t_2]$ of length $4C_1\abs{\log\eps}+O(1)$
during which $\varphi_t x$ is within distance at most $\rho_0/2$ of
$\mc{O}(p_1)$, corresponding to the junction between the orbits
$\gamma_{x_{i}}$ and $\gamma_{x_{i+1}}$ where $i$ is such that $x$ belongs to
the shadow of $\gamma_{x_{i-1}}$. For $t \in [t_1, t_2]$, one also has
$d(\varphi_t y, \mc{O}(p_1)) \leq \rho_0$ as the orbits follow each other up
to $\eps$. Note that in each $\gamma_j$ the consecutive time spent close to
$\mc{O}(p_1)$ is bounded by $2C_1\abs{\log\eps}$ as we have forced a passage
close to $p_2$ (and therefore far away from $\mc{O}(p_1))$) after this time
in the construction. It follows that also for $y$ the time interval
$[t_1,t_2]$ has to correspond to a junction between two orbits
$\gamma_{x_{j}}$ and $\gamma_{x_{j+1}}$.
Consider the smallest times $t$ and $t'$ after the junctions for which
$\varphi_t(x)$ and $\varphi_{t'}(y)$ are $2\rho_0$-close to $z_0$. Since the
orbit $\gamma'_-$ meets $W_{3\rho_0}(z_0)$ at the single point $z_0$, these
times have to correspond to each other, i.e., the orbits are synchronized up
to an error $O(\eps)$. To conclude, it remains to show that $i=j$. Suppose by
contradiction $i<j$ for instance. The orbit of $x$ follows $\gamma_{x_i}$ up
to $2C_0 \eps$, the orbit of $y$ follows $\gamma_{x_j}$ up to $2C_0 \eps$,
and the orbits of $x$ and $y$ are within $\eps$ of each other. We deduce that
$\gamma_{x_i}$ and $\gamma_{x_j}$ follow each other up to $(4C_0+1)\eps$.
Since $x_j$ is within $\eps$ of $\gamma_{x_j}$, it follows that $x_j$ is at
within $(4C_0+2)\eps$ of $\gamma_{x_i}$. This is a contradiction with the
construction, as we could have added the point $x_j$ only it was not in the
$\eps^{\beta_s}$-neighborhood of $\gamma_{x_i}$, and $\eps^{\beta_s} >
(4C_0+2)\eps$ if $\eps$ is small enough.
\end{proof}

\subsubsection{Construction of the approximate coboundary.}

Let us now prove Theorem~\ref{theorem:livsic-approche}. The result is obvious
if $\eps$ is bounded away from $0$, by taking $u=0$ and $h=f$. Hence, we can
assume that $\eps$ is small enough to apply Lemma~\ref{lemma:bonne-orbite},
with $\kappa=\eps_0$. On the orbit $\mc{O}(x_0)$ given by this lemma, we
define a function $\tilde{u}$ by $\tilde{u}(\varphi_t x_0) = \int_0^t
f(\varphi_s x_0) ds$. Note that it may not be continuous at $x_0$. As a
consequence, we will rather denote by $\mc{O}(x_0)$ the set $(\varphi_t
x_0)_{0 \leq t \leq T-1}$ (which satisfies the required properties of density
and transversality) in order to avoid problems of discontinuity.

\begin{lemma}
There exist $\beta_1,C > 0$ independent of $\eps$ such that
$\|\tilde{u}\|_{C^{\beta_1}(\mc{O}(x_0))} \leq C$.
\end{lemma}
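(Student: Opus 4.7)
The plan is to estimate $|\tilde u(\varphi_t x_0) - \tilde u(\varphi_{t'} x_0)|$ in terms of the ambient distance $\eta := d(\varphi_t x_0, \varphi_{t'} x_0)$ by splitting into two regimes at the threshold $\eta \sim \eps^{\beta_s}$, exploiting the two properties of $\mc{O}(x_0)$ in turn: transverse separation for small $\eta$, and the orbit-length bound $T \leq \eps^{-1/2}$ combined with the hypothesis on $If$ for moderate $\eta$. WLOG assume $t < t'$; then $\tilde u(\varphi_{t'} x_0) - \tilde u(\varphi_t x_0) = \int_t^{t'} f(\varphi_s x_0)\,ds$.

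In the small regime $\eta \lesssim \eps^{\beta_s}$, I will use the local product structure near $\varphi_t x_0$ to factor $\varphi_{t'} x_0 = \varphi_\delta(y')$ with $y' \in W_{\eps^{\beta_s}}(\varphi_t x_0)$ and $|\delta|, d(\varphi_t x_0, y') \lesssim \eta$. Since $y' = \varphi_{t'-\delta} x_0$ belongs to $\mc{O}(x_0)$, the transverse separation provided by Lemma~\ref{lemma:bonne-orbite} forces $y' = \varphi_t x_0$, hence $|t-t'| = |\delta| \lesssim \eta$, giving the trivial Lipschitz bound $|\tilde u(\varphi_t x_0) - \tilde u(\varphi_{t'} x_0)| \leq \|f\|_\infty |t-t'| \lesssim \eta$.

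In the moderate regime $\eps^{\beta_s} \lesssim \eta \leq \eps_0$, I will apply the Anosov closing lemma (Theorem~\ref{thm:shadowing}) to the finite orbit segment from $\varphi_t x_0$ to $\varphi_{t'} x_0$ (whose endpoints are $\eta$-close) to produce a periodic orbit $\gamma$ of period $\tau$ with $|\tau - (t'-t)| \leq C\eta$ and exponential shadowing $d(\gamma(s), \varphi_{t+s}(x_0)) \leq C\eta e^{-\theta \min(s, t'-t-s)}$. Taking $T \leq \tfrac{1}{2}\eps^{-1/2}$ in Lemma~\ref{lemma:bonne-orbite} (a cosmetic adjustment) guarantees $\tau \leq \eps^{-1/2}$, so the assumption on $f$ yields $\bigl|\int_\gamma f\bigr| \leq \tau \eps \leq \eps^{1/2}$. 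Comparing this integral with $\int_t^{t'} f(\varphi_s x_0)\,ds$ produces (i) a length-mismatch error $\leq C\eta \|f\|_\infty$ from the closing piece, and (ii) a Hölder-shadowing error $\leq \|f\|_{C^\alpha} \int_0^{t'-t} (C\eta e^{-\theta \min(s,t'-t-s)})^\alpha\,ds \lesssim \eta^\alpha$, where crucially the exponential decay of shadowing makes the latter bound independent of $t'-t$. Thus $|\tilde u(\varphi_t x_0) - \tilde u(\varphi_{t'} x_0)| \lesssim \eps^{1/2} + \eta^\alpha + \eta$, and on this regime $\eps^{1/2} \lesssim \eta^{1/(2\beta_s)}$.

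Choosing $\beta_1 := \min(\alpha, 1/(2\beta_s))$ amalgamates the two regimes into a single Hölder estimate $|\tilde u(x) - \tilde u(y)| \leq C\, d(x,y)^{\beta_1}$ for $x,y \in \mc{O}(x_0)$ with $d(x,y) \leq \eps_0$; for larger distances the bound follows from a chain argument through intermediate orbit points. The main obstacle is the moderate regime: matching the Birkhoff integral along $\mc{O}(x_0)$ to the periodic orbit produced by shadowing with error $O(\eta^\alpha)$ requires the \emph{exponential} shadowing bound~\eqref{eq:d_hyperbolic} rather than a uniform one, and the final exponent $\beta_1$ is dictated by the geometrically forced trade-off between $\beta_s$ (how transversally separated the orbit is) and the orbit length $\eps^{-1/2}$.
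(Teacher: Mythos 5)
Your proof follows essentially the same strategy as the paper's: control the Hölder seminorm by closing up the orbit-piece from $\varphi_t x_0$ to $\varphi_{t'}x_0$ via the Anosov closing lemma, bound the contribution of the closing periodic orbit by $\tau\eps \lesssim \eps^{1/2}\lesssim \eta^{1/(2\beta_s)}$ using the transversal separation, and bound the shadowing discrepancy by $\eta^\alpha$ thanks to the exponential decay in~\eqref{eq:d_hyperbolic}. The only place where your route genuinely diverges from the paper is the $C^0$ bound. The paper exploits the last feature of Lemma~\ref{lemma:bonne-orbite} — a distinguished sub-segment $S$ of $\mc{O}(x_0)$ of $\eps$-independent length which is already $\eps_0$-dense — so that $\abs{\tilde u(x)}$ is controlled by $\abs{\tilde u(x)-\tilde u(x_S)}+\abs{\tilde u(x_S)-\tilde u((x_0)_S)}+\abs{\tilde u((x_0)_S)}$, the outer terms by the Hölder estimate and the middle one by the length of $S$. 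Your replacement, a chain of intermediate orbit points, is a valid alternative and does not need the $S$-feature; but as written it is a one-line hand-wave, and you should make explicit that (i) the $\eps^{\beta_d}$-density of $\mc{O}(x_0)$ lets you place $O(\mathrm{diam}(\mc{M})/\eps_0)$ orbit points along a path from $x_0$ to $x$ with consecutive gaps $\leq\eps_0$, (ii) the number of links is bounded independently of $\eps$, and (iii) each link costs $O(\eps_0^{\beta_1})$ via the Hölder estimate just proved. A shared caveat with the paper, worth noting but not a gap: the factoring $\varphi_{t'}x_0 = \varphi_\delta(y')$ with $\abs{\delta}\lesssim\eta$ uses the local product structure, which is only Hölder (cf.\ Lemma~\ref{lemma:holder-pit}); this only lowers the final exponent $\beta_1$, not its positivity.
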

\begin{proof}
We first study the Hölder regularity of $\tilde{u}$, namely we want to
control $|\tilde{u}(x)-\tilde{u}(y)|$ by $C d(x,y)^{\beta_1}$ for some
well-chosen exponent $\beta_1$, when $d(x,y) \leq \eps_0$ (where $\eps_0$ is
the scale under which the shadowing theorem~\ref{thm:shadowing} holds). If
$x$ and $y$ are on the same local flow line, then the result is obvious since
$f$ is bounded by $1$, so we are left to prove that $\tilde{u}$ is
transversally $C^{\beta_1}$. Consider $x=\varphi_{t_0} x_0 \in \mc{O}(x_0)$
and $y=\varphi_{t_0+t} \in W_{\eps_0}(x)$. By transversal separation of
$\mc{O}(x_0)$, these points satisfy $d(x,y)\geq \eps^{\beta_s}$. We can close
the segment $[xy]$ i.e., we can find a periodic point $p$ such that $d(p,x)
\leq C d(x,y)$ with period $t_p = t + \tau$, where $|\tau| \leq Cd(x,y)$
which shadows the segment. Then:
\[
|\tilde{u}(y)-\tilde{u}(x)| \leq \underbrace{\left|\int_0^t f(\varphi_s x) ds - \int_0^{t_p} f(\varphi_s p) ds\right|}_{=\text{(I)}} + \underbrace{\left|\int_0^{t_p} f(\varphi_s p) ds \right|}_{=\text{(II)}}
\]
The first term $\text{(I)}$ is
bounded by $Cd(x,y)^{\beta_1'}$ for some $\beta'_1 > 0$ depending on the
dynamics, whereas the second term $\text{(II)}$ is bounded — by assumption
— by $\eps t_p$. But $\eps t_p \lesssim \eps t \lesssim  \eps T \lesssim
\eps^{1/2} \lesssim d(x,y)^{1/2\beta_s}$. We thus obtain the sought result
with $\beta_1 \coloneqq \min(\beta'_1,1/2\beta_s)$.

We now prove that $\tilde{u}$ is bounded for the $C^0$-norm. We know that
there exists a segment of the orbit $\mc{O}(x_0)$ — call it $S$ — of length
$\leq C$ which is $\eps_0$-dense in $\M$. In particular, for any $x \in
\mc{O}(x_0)$, there exists $x_S \in S$ with $d(x,x_S) \leq \eps_0$, and
therefore $\abs{\tilde u(x)-\tilde u(x_S)} \leq C d(x,x_S)^{\beta_1} \leq C
\eps_0^{\beta_1}$ thanks to the Hölder control of the previous paragraph.
Using the same argument with $x_0$, we get as $\tilde u(x_0)=0$
\begin{equation*}
  \abs{\tilde u(x)} = \abs{\tilde u(x)- \tilde u(x_0)} \leq \abs{\tilde u(x) - \tilde u(x_S)}
  + \abs{\tilde u(x_S) - \tilde u((x_0)_S)} + \abs{\tilde u(x_0)- \tilde u((x_0)_S)}.
\end{equation*}
The first and last term are bounded by $C \eps_0^{\beta_1}$, and the middle
one is bounded by $C$ as $S$ has a bounded length and $\|f\|_{C^0} \leq 1$.
\end{proof}

For each $i$, we extend the function $\tilde u$ (defined on $\mc{O}(x_0)$) to
a Hölder function $u_i$ on $\Sigma_i$, by the formula $u_i(x) = \sup \tilde
u(y) - \norm{\tilde{u}}_{C^{\beta_1}(\mc{O}(x_0))} d(x,y)^{\beta_1}$, where
the supremum is taken over all $y \in \mc{O}(x_0)$. With this formula, it is
classical that the extension is Hölder continuous, with
$\norm{u_i}_{C^{\beta_1}(\Sigma_i)} \leq
\|\tilde{u}\|_{C^{\beta_1}(\mc{O}(x_0))}$. We then push the function $u_i$ by
the flow in order to define it on $\mc{U}_i$ by setting for $x \in \Sigma_i,
\varphi_tx \in \mc{U}_i$: $u_i(\varphi_t x) = u_i(x) + \int_0^{t} f(\varphi_s
x) ds$. Note that by Lemma \ref{lemma:holder-pit}, the extension is still
Hölder with the same regularity. We now set $u \coloneqq \sum_i u_i \theta_i$
and $h\coloneqq f-Xu=-\sum_i u_i X\theta_i$. The functions $X\theta_i$ are
uniformly bounded in $C^\infty$, independently of $\eps$ so the functions
$u_i X\theta_i$ are in $C^{\beta_1}$ with a Hölder norm independent of $\eps
> 0$ and thus $\|h\|_{C^{\beta_1}} \leq C$.

\begin{lemma}
$\|h\|_{C^{\beta_1/2}} \leq \eps^{\beta_3/2}$
\end{lemma}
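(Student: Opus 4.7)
The plan is to first show that $h$ vanishes identically on the distinguished periodic orbit $\mc{O}(x_0)$ furnished by Lemma~\ref{lemma:bonne-orbite}, then exploit its $\eps^{\beta_d}$-density together with the already established uniform Hölder bound $\|h\|_{C^{\beta_1}} \leq C$ to obtain a small $C^0$-bound, and finally interpolate.

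\textbf{Step 1: $h \equiv 0$ on $\mc{O}(x_0)$.} I would verify that $u$ coincides with $\tilde u$ on $\mc{O}(x_0)$. By construction, for each chart $i$, the Hölder extension $u_i$ to $\Sigma_i$ agrees with $\tilde u$ on $\mc{O}(x_0) \cap \Sigma_i$ (the supremum defining $u_i$ is attained at the point itself when it belongs to $\mc{O}(x_0)$, up to the usual care with the modulus of continuity). Pushing by the flow via $u_i(\varphi_t x) = u_i(x) + \int_0^t f(\varphi_s x)\,ds$ matches verbatim the defining formula for $\tilde u$ along $\mc{O}(x_0)$, so $u_i = \tilde u$ on $\mc{U}_i \cap \mc{O}(x_0)$. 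Restricting to the subset $(\varphi_t x_0)_{0 \leq t \leq T-1}$ avoids the discontinuity of $\tilde u$ at $x_0$. Since $\sum_i \theta_i = 1$, we get $u = \sum_i u_i \theta_i = \tilde u$ on $\mc{O}(x_0)$, and differentiating along $X$,
\[
Xu = \sum_i (Xu_i)\theta_i + \sum_i u_i X\theta_i = f \cdot 1 + \tilde u \cdot X(1) = f \qquad \text{on } \mc{O}(x_0),
\]
so $h = f - Xu = 0$ on $\mc{O}(x_0)$.

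\textbf{Step 2: $C^0$-smallness via density.} For any $y \in \mc{M}$, the $\eps^{\beta_d}$-density of $\mc{O}(x_0)$ provides $x \in \mc{O}(x_0)$ with $d(x,y) \leq \eps^{\beta_d}$. Since $h(x) = 0$ and $\|h\|_{C^{\beta_1}} \leq C$,
\[
|h(y)| = |h(y) - h(x)| \leq C\, d(x,y)^{\beta_1} \leq C \eps^{\beta_1 \beta_d}.
\]
Setting $\beta_3 \coloneqq \beta_1 \beta_d$, this gives $\|h\|_{C^0} \leq C\eps^{\beta_3}$.

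\textbf{Step 3: Interpolation.} I would apply the elementary Hölder interpolation
\[
\frac{|h(x)-h(y)|}{d(x,y)^{\beta_1/2}} = \left(\frac{|h(x)-h(y)|}{d(x,y)^{\beta_1}}\right)^{1/2}|h(x)-h(y)|^{1/2} \leq [h]_{\beta_1}^{1/2}(2\|h\|_\infty)^{1/2},
\]
which together with the previous bounds yields $\|h\|_{C^{\beta_1/2}} \leq C \eps^{\beta_3/2}$; absorbing the constant for $\eps$ sufficiently small (or replacing $\beta_3$ by a slightly smaller exponent) gives the stated inequality.

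The main obstacle is really Step 1, namely checking carefully that the partition-of-unity recombination $u = \sum_i u_i \theta_i$ preserves the identity $u = \tilde u$ on the orbit — this is where the choice to truncate $\mc{O}(x_0)$ at length $T-1$ to avoid the discontinuity of $\tilde u$ at $x_0$ becomes essential. Once that identity is in hand, everything else is a clean combination of density and interpolation.
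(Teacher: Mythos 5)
Your proof is correct and follows essentially the same route as the paper's: show $h$ vanishes on the distinguished orbit via $u_i \equiv \tilde u$ there and $\sum_i\theta_i=1$, then combine the $\eps^{\beta_d}$-density with the uniform $C^{\beta_1}$-bound to get $\|h\|_{C^0}\lesssim\eps^{\beta_1\beta_d}$, and conclude by Hölder interpolation. Your Step 1 is slightly more explicit than the paper (which directly writes $h=-\sum_i u_iX\theta_i=-\tilde u\,X\mathbf 1=0$ on $\mc O(x_0)$), but the two computations are identical in substance.
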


\begin{proof}
We claim that $h$ vanishes on $\mc{O}(x_0)$: indeed, on $\mc{U}_i \cap
\mc{O}(x_0)$ one has $u_i \equiv \tilde{u}$ and thus $h = -\tilde{u} \sum_i
X\theta_i = -\tilde{u} X \sum_i \theta_i = -\tilde{u} X \mathbf{1} = 0$.
Since $\mc{O}(x_0)$ is $\eps^{\beta_d}$-dense and $\|h\|_{C^{\beta_1}} \leq
C$, we get that $\|h\|_{C^0} \leq C \eps^{\beta_1 \beta_d} = C
\eps^{\beta_3}$, where $\beta_3 = \beta_1\beta_d$. By interpolation, we
eventually obtain that $\|h\|_{C^{\beta_1/2}} \leq
\eps^{\beta_3/2}$.\end{proof}

\begin{proof}[Proof of Theorem \ref{theorem:livsic-approche}]
The previous lemma provides the sought estimate on the remainder $h$. This
completes the proof of Theorem \ref{theorem:livsic-approche}.
\end{proof}

\section{Generalized geodesic X-ray transform}

\label{section:pi}

From now on, we will rather use the dual decomposition of the cotangent
space $T^*\M = E_0^* \oplus E_u^* \oplus E_s^*$, where $E_0^*(E_u \oplus
E_s) = 0, E_s^*(E_s \oplus \R X)=0, E_u^*(E_u \oplus \R X) = 0$. If
${A}^{-\top}$ denotes the inverse transpose of a linear operator $A$, then
the dual estimates to (\ref{equation:hyperbolicite}) are:
\be \begin{array}{c} |d\varphi^{-\top}_t(x) \cdot \xi|_{\varphi_t(x)} \leq C e^{-\lambda t} |\xi|_{x}, ~~ \forall t > 0, \xi \in E^*_s(x) \\
|d\varphi_t(x) \cdot \xi|_{\varphi_t(x)} \leq C e^{-\lambda |t|} |\xi|_{x},
~~ \forall t < 0, \xi \in E^*_u(x),\end{array}, \ee where $|\cdot|_x$ is
now $g^{-1}$, the dual metric to $g$ (which makes the musical isomorphism
$\flat : T\M \rightarrow T^*\M$ an isometry). For the sake of simplicity, we now assume that $X$ generates a contact Anosov flow; the results of this paragraph will be applied to the case of an Anosov geodesic flow. It would actually be sufficient to assume that the flow is Anosov, preserves a smooth measure and that it is  mixing for this measure. Note that a contact Anosov flow is exponentially mixing by \cite{Liverani-04}. We will denote by $\mu$ the \emph{normalized} volume form induced by the contact $1$-form. In the case of a geodesic flow, $\mu$ is nothing but the Liouville volume form. By $L^2(\mc{M})$, we will always refer to the space $L^2(\mc{M},d\mu)$. The orthogonal projection on the constant function is denoted by $\mathbf{1}\otimes\mathbf{1}$.

\subsection{Resolvent of the flow at \texorpdfstring{$0$}{0}}

By \cite{Faure-Sjostrand-11}, we know that the resolvents
$R_\pm(\lambda)\coloneqq (X \pm \lambda)^{-1} : \mc{H}_\pm^s \rightarrow
\mc{H}_\pm^s$ (initially defined for $\Re(\lambda) > 0$) admit a meromorphic extension to the half-space
$\left\{\Re(\lambda)>-cs\right\}$ — where $\mc{H}_\pm^s$ are
\emph{anisotropic Sobolev spaces} — and thus $R_\pm(\lambda) : C^\infty(\M)
\rightarrow \mc{D}'(\M)$ admit a meromorphic extension to the whole complex
plane. For $\Re(\lambda) > 0$, $R_\pm(\lambda) : L^2(\mc{M}) \rightarrow
L^2(\mc{M})$ are bounded and the expression $R_+(\lambda)$ is given by \be
\label{equation:r_plus} R_+(\lambda) = (X+\lambda)^{-1} = \int_0^{+\infty}
e^{-\lambda t} e^{-tX} dt, \ee where $e^{-tX}f(x) = f(\varphi_{-t}(x))$ for
$f \in C^\infty(\M), x \in \M$.

In a neighborhood of $0$, we can thus write the Laurent expansions \be
\label{equation:laurent-expansion} R_+(\lambda) = R_0^+ + \dfrac{\mathbf{1}
\otimes \mathbf{1}}{\lambda} + \mc{O}(\lambda), \qquad R_-(\lambda) = R_0^-
- \dfrac{\mathbf{1}\otimes \mathbf{1}}{\lambda} + \mc{O}(\lambda), \ee
where $R_0^+ : \mc{H}_+^s \rightarrow \mc{H}_+^s, R_0^- : \mc{H}_-^s
\rightarrow \mc{H}_-^s$ are bounded. Since $H^s \subset \mc{H}_\pm^s
\subset H^{-s}$, we obtain that $R_0^\pm : H^s \rightarrow H^{-s}$ are
bounded and thus $(R_0^+)^* : H^s \rightarrow H^{-s}$ is bounded too.
Moreover, it is easy to check that formally $(R_0^+)^*=-R_0^-$ (i.e., the
operators coincide on $C^\infty(\M)$), in the sense that for all $f_1,f_2
\in C^\infty(\M)$, $\langle R_0^- f_1,f_2 \rangle_{L^2(\M)} = \langle f_1,
- R_0^+ f_2 \rangle_{L^2(\M)}$. Since $C^\infty(\M)$ is dense in $H^s(\M)$,
we obtain that $(R_0^+)^* = -R_0^-$ on $H^s(\M)$, in the sense that for all
$f_1,f_2 \in H^s(\M)$, $\langle R_0^- f_1,f_2 \rangle_{L^2(\M)} = \langle
f_1, -R_0^+ f_2 \rangle_{L^2(\M)}$.

Also remark that, as operators $C^\infty(\M) \rightarrow \mc{D}'(\M)$, one
has: \be \label{equation:identites} XR_0^+ = R_0^+X = \mathbbm{1} -
\mathbf{1} \otimes \mathbf{1}, \quad XR_0^- = R_0^- X = \mathbbm{1} -
\mathbf{1} \otimes \mathbf{1} \ee

For the sake of simplicity, we will write $R_0\coloneqq R_0^+$. We introduce the operator \be
\Pi \coloneqq R_0 + R_0^*, \ee the sum of the two holomorphic parts of the
resolvent. An easy computation, using (\ref{equation:laurent-expansion}),
proves that $\Pi(\mathbf{1}) = 0$ and the image $\Pi(C^\infty(\M))$ is
orthogonal to the constants. We recall the

\begin{theorem}{\cite[Theorem 1.1]{Guillarmou-17-1}}
\label{theorem:guillarmou} For all $s > 0$, the operator $\Pi : H^s(\M) \rightarrow
H^{-s}(\M)$ is bounded, selfadjoint and satisfies:
\begin{enumerate}
\item $\forall f \in H^s(\M), X \Pi f = 0, $
\item $\forall f \in H^{s}(\M)$ such that $Xf\in H^s(\M)$, $\Pi X f = 0.$
\end{enumerate}
If $f \in H^s(\M)$ with $\langle f,\mathbf{1}\rangle_{L^2} = 0$, then $f
\in \ke \Pi$ if and only if there exists a solution $u \in H^s(\M)$ to the
cohomological equation $X u = f$, and $u$ is unique modulo constants.
\end{theorem}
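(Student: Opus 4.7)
The plan is to derive every assertion from two previously recorded facts: the resolvent identities $XR_0^\pm = R_0^\pm X = \mathbbm{1} - \mathbf{1}\otimes\mathbf{1}$ of~\eqref{equation:identites} and the duality $R_0^* = -R_0^-$, combined with the boundedness $R_0^\pm : H^s(\mc{M}) \to \mathcal{H}_\pm^s$ coming from~\cite{Faure-Sjostrand-11}. Since $H^s \subset \mathcal{H}_\pm^s \subset H^{-s}$, boundedness of $\Pi : H^s \to H^{-s}$ is then immediate, and selfadjointness is the one-line computation $\Pi^* = (R_0 + R_0^*)^* = R_0^* + R_0 = \Pi$. Writing $\Pi = R_0^+ - R_0^-$, property~(1) follows from the cancellation
\[
X \Pi f = (f - \langle f,\mathbf{1}\rangle\mathbf{1}) - (f - \langle f,\mathbf{1}\rangle\mathbf{1}) = 0,
\]
and applying the identities on the right proves~(2) in the same way, the hypothesis $Xf \in H^s$ being exactly what is needed to feed $Xf$ into $R_0^\pm$.

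For the characterization of $\ker \Pi$, one direction is immediate: if $f = Xu$ with $u \in H^s$, then applying~(2) to $u$ (for which both $u$ and $Xu=f$ lie in $H^s$) gives $\Pi f = \Pi X u = 0$. For the converse, assume $\langle f,\mathbf{1}\rangle = 0$ and $\Pi f = 0$, and set $u \coloneqq R_0 f$. Then $Xu = XR_0^+ f = f - \langle f,\mathbf{1}\rangle \mathbf{1} = f$ by~\eqref{equation:identites}, so only the regularity $u \in H^s(\mc{M})$ remains to justify. The equation $\Pi f = 0$ rewrites as $R_0^+ f = -R_0^* f = R_0^- f$, so $u$ lies simultaneously in $\mathcal{H}_+^s$ and in $\mathcal{H}_-^s$.

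The step I expect to be the real obstacle is to deduce $u \in H^s(\mc{M})$ from $u \in \mathcal{H}_+^s \cap \mathcal{H}_-^s$. This rests on the microlocal fact, essentially built into the Faure--Sj\"ostrand framework, that $\mathcal{H}_+^s \cap \mathcal{H}_-^s = H^s$: the escape weights defining these spaces upgrade the Sobolev regularity to $+s$ along $E_u^*$ and along $E_s^*$ respectively, and since the dual splitting $T^*\mc{M} = E_0^* \oplus E_u^* \oplus E_s^*$ forces $E_u^* \cap E_s^* = \{0\}$, imposing both escape conditions recovers the isotropic $H^s$ regularity in every cotangent direction. Uniqueness modulo constants then follows from ergodicity: any $v \in H^s \subset L^2$ with $Xv=0$ is flow-invariant, and contact Anosov flows being mixing (hence ergodic) for $\mu$ by~\cite{Liverani-04}, such a $v$ must be almost-everywhere, hence $H^s$-distributionally, constant.
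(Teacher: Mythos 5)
The theorem is quoted verbatim from \cite[Theorem 1.1]{Guillarmou-17-1}; the present paper does not reprove it, so there is no in-paper argument to compare with, and I assess your proposal on its own terms. Most of it is correct: boundedness and selfadjointness, items (1)--(2), the easy implication ($f=Xu$ with $u\in H^s$ gives $\Pi f=0$ via item (2)), and uniqueness modulo constants via ergodicity all follow as you say from $(R_0^+)^*=-R_0^-$ and the identities~\eqref{equation:identites}, modulo the routine density argument needed to pass from $C^\infty(\mc{M})$ to $H^s(\mc{M})$.

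The gap is the regularity step, where you assert the purely functional-analytic identity $\mathcal{H}_+^s\cap\mathcal{H}_-^s=H^s(\mc{M})$. This is not correct, and the justification you give does not establish it. Membership in $\mathcal{H}_\pm^s$ encodes microlocal Sobolev regularity of a \emph{variable} order $m_\pm(x,\xi)\in[-s,s]$, where $m_+$ equals $+s$ only in a conic neighborhood of one of the two radial sets (say $E_s^*$) and drops to $-s$ near the other (say $E_u^*$), interpolating in between with $Xm_+\leq 0$ along the characteristic set $\Sigma=E_u^*\oplus E_s^*$; $m_-$ behaves oppositely. The intersection therefore provides microlocal regularity $\max(m_+,m_-)$, which is $s$ in conic neighborhoods of $E_u^*$ and of $E_s^*$ but, for any fixed choice of escape functions, is strictly less than $s$ on the intermediate conic region of $\Sigma$ where both $m_+$ and $m_-$ lie strictly between $-s$ and $s$. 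Your observation that $E_u^*\cap E_s^*=\{0\}$ only covers two closed conic sets; it says nothing about the intermediate directions in $\Sigma$ nor about the neutral direction $E_0^*$. So $H^s\subsetneq\mathcal{H}_+^s\cap\mathcal{H}_-^s$, and the step ``$u\in\mathcal{H}_+^s\cap\mathcal{H}_-^s$, hence $u\in H^s$'' does not hold as stated.

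What actually closes this step in \cite{Guillarmou-17-1} is that $u$ is not an arbitrary element of the intersection: it solves $Xu=f$ with $f\in H^s$, and one exploits this PDE microlocally. Near $E_0^*$ the operator $X$ is elliptic, giving $H^{s+1}$ regularity. On $\Sigma$, the radial source/sink estimates at $E_s^*$ and $E_u^*$ give $H^s$ regularity near those sets (this is what being the boundary value of $R_\pm(\lambda)$ buys you), and Hörmander's propagation of singularities along the Hamiltonian flow of the principal symbol of $X$, run in both time directions from the two radial sets, upgrades $u$ to $H^s$ on the whole of $\Sigma\setminus\{0\}$. This propagation/radial-point bootstrap, which uses the equation $Xu=f$ in an essential way, is the missing ingredient; the intersection of the two anisotropic spaces alone does not suffice.
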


There exists two other characterizations of the operator $\Pi$ that are
more tractable and which we detail in the next proposition. We set
$\Pi_{\lambda} \coloneqq \mathbf{1}_{(-\infty,\lambda]}(-iX)$.

\begin{proposition}
\label{proposition:caracterisation}
For $f_1,f_2 \in C^\infty(\M)$ such that $\langle f,\mathbf{1}\rangle_{L^2} = 0$:
\begin{enumerate}
\item $\langle \Pi f_1, f_2 \rangle = 2\pi \partial_\lambda|_{\lambda =
    0} \langle \Pi_\lambda f_1, f_2 \rangle, $
\item $\langle \Pi f_1, f_2 \rangle = \int_{-\infty}^{+\infty} \langle
    f_1 \circ \varphi_t , f_2 \rangle dt.$
\end{enumerate}
\end{proposition}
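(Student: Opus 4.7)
The plan is to establish (2) from the time-domain representation~\eqref{equation:r_plus} of the resolvent, and then to deduce (1) from (2) via the spectral theorem for $-iX$. The mean-zero hypothesis kills the residues in~\eqref{equation:laurent-expansion}, and exponential mixing for contact Anosov flows (Liverani \cite{Liverani-04}) furnishes the bound $|\langle f_1\circ\varphi_t, f_2\rangle|\leq Ce^{-\alpha|t|}$ that is needed to justify every exchange of limits.

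For (2), pair~\eqref{equation:r_plus} with $f_2$ to get, for $\Re\la>0$,
\[
\langle R_+(\la)f_1, f_2\rangle = \int_0^{+\infty} e^{-\la t}\langle f_1\circ\varphi_{-t}, f_2\rangle\, dt.
\]
Dominated convergence allows me to send $\la\to 0^+$; after the change of variables $s=-t$ this yields $\langle R_0^+ f_1, f_2\rangle = \int_{-\infty}^0 \langle f_1\circ\varphi_s, f_2\rangle\, ds$. The analogous representation $R_-(\la)=-\int_0^{+\infty} e^{-\la t}e^{tX}\, dt$ (valid for $\Re\la>0$) gives in the same way $\langle R_0^- f_1, f_2\rangle=-\int_0^{+\infty}\langle f_1\circ\varphi_t, f_2\rangle\, dt$. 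Using $\Pi=R_0+R_0^*$ together with the identity $R_0^*=-R_0^-$ already established in the text, summation produces $\int_{-\infty}^{+\infty}\langle f_1\circ\varphi_t, f_2\rangle\, dt$, which is (2).

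For (1), observe that $X$ preserves $\mu$, so by Stone's theorem applied to the unitary group $\{e^{tX}\}_{t\in\R}$ on $L^2(\mc{M}, d\mu)$, the operator $-iX$ is selfadjoint; write $\{E_\la\}_{\la\in\R}$ for its spectral resolution, so that $\Pi_\la=E_\la$. Set $d\rho(\eta):=d\langle E_\eta f_1, f_2\rangle$, a finite complex measure on $\R$. Functional calculus gives
\[
\int_\R e^{is\eta}\, d\rho(\eta) = \langle e^{is(-iX)}f_1, f_2\rangle = \langle e^{sX}f_1, f_2\rangle = \langle f_1\circ\varphi_s, f_2\rangle,
\]
so the Fourier transform of $d\rho$ is exactly the correlation function. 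Since this lies in $L^1(\R)$ by exponential mixing, $d\rho$ admits a continuous density $\rho'$, and Fourier inversion at $\eta=0$ combined with (2) yields
\[
\rho'(0)=\frac{1}{2\pi}\int_{-\infty}^{+\infty}\langle f_1\circ\varphi_s, f_2\rangle\, ds=\frac{1}{2\pi}\langle\Pi f_1, f_2\rangle.
\]
Because $\langle\Pi_\la f_1, f_2\rangle=\rho((-\infty,\la])$, its derivative at $\la=0$ equals $\rho'(0)$, which rearranges to (1).

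The only delicate point is the justification of the two analytic manipulations — the limit $\la\to 0^+$ inside the time integral in the proof of (2), and the Fourier inversion at $\eta=0$ in the proof of (1). Both rest squarely on the integrability of $t\mapsto\langle f_1\circ\varphi_t, f_2\rangle$, which is precisely what the contact (equivalently, exponentially mixing) hypothesis provides; for a merely Anosov flow the two formulas would only make sense in a distributional or Abel-summable sense.
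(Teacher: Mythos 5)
Your argument for (2) is essentially the one the paper gives: pass to the limit $\lambda\to 0^+$ in the Laplace representation of $R_\pm(\lambda)$ using dominated convergence, justified by exponential (in fact, any integrable) decay of correlations, and then add the two halves. You are slightly more explicit in splitting the computation into the $R_0^+$ and $R_0^-$ halves, but this is cosmetic.

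Your treatment of (1), however, is a genuinely different route from the paper's. The paper invokes Stone's formula
\[
\langle \Pi_{\lambda+\delta} f_1, f_2 \rangle - \langle \Pi_{\lambda-\delta} f_1, f_2 \rangle
= \frac{1}{2\pi}\int_{\lambda-\delta}^{\lambda+\delta}\langle(R_+(-i\mu)-R_-(i\mu))f_1,f_2\rangle\,d\mu,
\]
divides by $2\delta$ and lets $\delta\to 0^+$, using the continuity of the extended resolvents at $\mu=0$ (once the pole is removed on mean-zero data) to evaluate the limit as $\tfrac{1}{2\pi}\langle(R_0^+-R_0^-)f_1,f_2\rangle = \tfrac{1}{2\pi}\langle\Pi f_1,f_2\rangle$. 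You instead run the spectral theorem in the Fourier direction: identifying $\Pi_\lambda$ as the spectral resolution $E_\lambda$ of $-iX$, you observe that the distribution function $\lambda\mapsto\langle E_\lambda f_1,f_2\rangle$ is the cumulative function of a finite complex measure $\rho$ whose Fourier transform is the correlation function, then use the $L^1$ decay of correlations to get a continuous density via Fourier inversion and evaluate it at $0$ using (2). The two approaches are dual incarnations of the same spectral statement. The paper's is logically independent of (2) and rests on the meromorphic continuation of the resolvent across the imaginary axis (Faure--Sjöstrand), while yours deduces (1) from (2) and needs only $L^1$ decay of correlations — which also gives you, for free, that the spectral measure of $-iX$ associated with $f_1,f_2$ has a continuous density near $0$, a fact the paper's argument establishes only implicitly. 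Both are correct; yours makes the existence of the derivative $\partial_\lambda|_{\lambda=0}\langle\Pi_\lambda f_1,f_2\rangle$ more transparent, since the paper's argument as written only controls the symmetric difference quotient, though this is easily repaired by noting that the integrand in Stone's formula is continuous at $\mu = 0$.
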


\begin{proof}
(1) For $f_1,f_2 \in C^\infty(\M)$ such that $\int_{\M} f_i d\mu = 0$,
we have using Stone's formula, for $\delta > 0$:
\[ \begin{split} \langle \Pi_{\lambda+\delta} f_1, f_2 \rangle - \langle \Pi_{\lambda-\delta} f_1, f_2 \rangle & = \langle \mathbf{1}_{[\lambda-\delta,\lambda+\delta]} f_1,f_2 \rangle \\
& = \dfrac{1}{2\pi} \int_{\lambda-\delta}^{\lambda+\delta}  \langle (R_+(-i\lambda)-R_-(i\lambda))f_1,f_2\rangle d\lambda \end{split} \]
Dividing by $2\delta$ and passing to the limit $\delta \rightarrow 0^+$, we
obtain $\partial_\lambda|_{\lambda = 0} \langle \Pi_\lambda f_1, f_2
\rangle = \frac{1}{2\pi} \langle (R_0^+-R_0^-)f_1,f_2\rangle
=\frac{1}{2\pi} \langle \Pi f_1,f_2\rangle $.

(2) Thanks to the exponential decay of correlations (see
\cite{Liverani-04}), one can apply Lebesgue's dominated convergence theorem
in the limit $\lambda \rightarrow 0^+$ in the following expression
\[ \langle \Pi f_1,f_2\rangle = \lim_{\lambda \rightarrow 0^+} \int_{-\infty}^{+\infty} e^{-\lambda |t|} \langle f_1 \circ \varphi_{-t} , f_2 \rangle dt, \]
and the result is then immediate. Note that a polynomial decay would have been sufficient.
\end{proof}

The quantity $\langle \Pi f ,f \rangle$ is sometimes referred to in the
literature as the \emph{variance} of the flow. In particular, it enjoys the
following positivity property:

\begin{lemma}
\label{lemma:positivite-operateur-pi} The operator $\Pi : H^s(\M)
\rightarrow H^{-s}(\M)$ is positive in the sense of quadratic forms, namely
$\langle \Pi f,f \rangle_{L^2} \geq 0$ for all real-valued $f \in H^s(\M)$.
\end{lemma}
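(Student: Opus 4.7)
The plan is to combine Proposition~\ref{proposition:caracterisation}(1) with the fact that the spectral distribution function of a selfadjoint operator is non-decreasing. First I would reduce to the mean-zero case: since $\Pi(\mathbf{1})=0$ and $\Pi$ is selfadjoint by Theorem~\ref{theorem:guillarmou}, writing $\bar f \coloneqq \int_{\M} f\, d\mu$ gives
\[
\langle \Pi f, f\rangle_{L^2} = \langle \Pi(f-\bar f), f-\bar f\rangle_{L^2} + 2\bar f\, \langle f-\bar f, \Pi\mathbf{1}\rangle_{L^2} = \langle \Pi(f-\bar f), f-\bar f\rangle_{L^2},
\]
so I may assume $\langle f,\mathbf{1}\rangle_{L^2}=0$. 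Since $\Pi : H^s(\M) \to H^{-s}(\M)$ is bounded, the quadratic form $f \mapsto \langle \Pi f, f\rangle_{L^2}$ is continuous on $H^s(\M)$, and by density of $C^\infty(\M)$ in $H^s(\M)$ it then suffices to prove the inequality for real-valued $f\in C^\infty(\M)$ with $\int f\, d\mu = 0$.

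For such an $f$, I would invoke characterization (1) of Proposition~\ref{proposition:caracterisation}, namely
\[
\langle \Pi f, f\rangle_{L^2} = 2\pi\, \partial_\lambda\big|_{\lambda=0} \langle \Pi_\lambda f, f\rangle_{L^2}.
\]
The operator $-iX$ is selfadjoint on $L^2(\M,d\mu)$ because the flow preserves $\mu$, so each $\Pi_\lambda = \mathbf{1}_{(-\infty,\lambda]}(-iX)$ is an orthogonal projection. The key observation is the monotonicity of the spectral function: for $\lambda_1<\lambda_2$, the difference $\Pi_{\lambda_2}-\Pi_{\lambda_1}$ is again an orthogonal projection, whence
\[
\langle \Pi_{\lambda_2} f, f\rangle_{L^2} - \langle \Pi_{\lambda_1} f, f\rangle_{L^2} = \|(\Pi_{\lambda_2}-\Pi_{\lambda_1})f\|_{L^2}^2 \geq 0.
\]
Thus $\lambda\mapsto \langle \Pi_\lambda f, f\rangle_{L^2}$ is non-decreasing, and its derivative at $0$, which the proposition guarantees to exist, is therefore non-negative. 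This gives $\langle \Pi f, f\rangle_{L^2}\geq 0$ for smooth mean-zero $f$, and the density reduction above concludes.

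There is no real obstacle once Proposition~\ref{proposition:caracterisation} is in hand; the content has been packaged into the existence of $\Pi$ and the identification of $\langle \Pi f, f\rangle_{L^2}$ with a spectral derivative. Alternatively, one could run the same argument through characterization (2): the correlation $t \mapsto \langle f\circ\varphi_t, f\rangle_{L^2}$ is positive definite, so by Bochner its Fourier transform is a positive measure on $\R$, and exponential mixing (which holds for contact Anosov flows by \cite{Liverani-04}) ensures that this measure admits a continuous density whose value at the origin is precisely $(2\pi)^{-1} \langle \Pi f, f\rangle_{L^2}$, hence non-negative.
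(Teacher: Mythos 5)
Your argument is correct, and in fact the paper itself flags it: after stating Lemma~\ref{lemma:positivite-operateur-pi} the authors remark that ``another way could be to use the first item of Proposition~\ref{proposition:caracterisation} and the fact that the spectral measure $\Pi_\lambda$ is non-decreasing'' — which is exactly what you do. The route actually detailed in the paper is different and more explicitly dynamical: it works with $R_\pm(\lambda)$ for $\lambda>0$, invokes Parry's equidistribution formula to express $\langle R_+(\lambda)f,f\rangle$ as a weighted limit over closed orbits, diagonalizes $R_+(\lambda)$ on each periodic orbit by Fourier series to obtain a termwise lower bound $\geq \lambda^{-1}|c_0|^2$, applies Jensen's inequality, and only then sends $\lambda\to 0^+$. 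Your spectral-theoretic argument is shorter and more abstract: once one accepts that $-iX$ is selfadjoint on $L^2(\mc M,d\mu)$ and that the spectral function $\lambda\mapsto\langle\Pi_\lambda f,f\rangle$ is non-decreasing, positivity of its derivative at $0$ (which exists by Proposition~\ref{proposition:caracterisation}(1) — and note the argument is insensitive to whether this is a one-sided or symmetric derivative, since a difference quotient of a non-decreasing function is always $\geq 0$) immediately gives the result. What the paper's approach buys is a proof ``in the dynamical spirit of the article,'' explicitly linking the variance $\langle\Pi f,f\rangle$ to averages of $If(\gamma)^2$ over closed geodesics, which resonates with the X-ray transform theme; what yours buys is economy, since you need only selfadjointness of $-iX$ and the already-established spectral characterization. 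One small point: your initial reduction to the mean-zero case is fine but not strictly needed — the paper's proof does not perform it either — since Proposition~\ref{proposition:caracterisation}(1) is stated under the mean-zero hypothesis, so the reduction is indeed the cleaner way to invoke it as you do, but if you instead work with $\Pi+\mathbf 1\otimes\mathbf 1 = R_+(0^+)+R_-(0^+)$ directly the constant poses no issue.
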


There are different ways of proving this lemma, related to the different
characterizations of the operator $\Pi$. We only detail one of them which
is in the dynamical spirit of this article. Another way could be to use the first item of Proposition \ref{proposition:caracterisation} and the fact that the spectral measure $\Pi_\lambda$ is non-decreasing.

\begin{proof}
By density, it is sufficient to prove the lemma for a real-valued $f \in
C^\infty(\M)$. We will actually show that for $\lambda > 0$:
\[ \langle \left( R_+(\lambda) - \dfrac{\mathbf{1} \otimes \mathbf{1}}{\lambda}  \right) f, f \rangle = \langle R_+(\lambda) f, f \rangle - \frac{1}{\lambda} \left(\int_{\M} f d\mu\right)^2\geq 0 \]
The same arguments being valid for $R_-(\lambda)$, we will deduce the
result by taking the limit $\lambda \rightarrow 0^+$. By Parry' formula
\cite[Paragraph 3]{Parry-88}, we know that:
\be \label{equation:parry}
\langle R_+(\lambda) f, f \rangle = \lim_{T \rightarrow \infty}
\dfrac{1}{N(T)} \sum_{\ell(\gamma) \leq T} e^{\int_\gamma J^u}
\dfrac{1}{\ell(\gamma)} \int_0^{\ell(\gamma)} R_+(\lambda)f(\varphi_t z)
f(\varphi_tz) dt, \ee where $\gamma$ is a periodic orbit, $z \in \gamma$, $\ell(\gamma)$ is the length of $\gamma$ and $N(T)= \sum_{\ell(\gamma)
\leq T}e^{\int_\gamma J^u}$ is a normalizing coefficient,
\[ J^u : x \mapsto \partial_t \det d\varphi_t(x)|_{E_u(x)}|_{t=0}\]
is the unstable Jacobian (or the \emph{geometric potential}). Let us fix a
closed orbit $\gamma$ and a base point $z \in \gamma$. We set $\tilde{f}(t)
\coloneqq f(\varphi_tz)$ which we see as a smooth function, $\ell$-periodic
on $\R$ (with $\ell \coloneqq \ell(\gamma)$). Since $R_+(\lambda)$ commutes
with $X$, $R_+(\lambda)$ acts as a Fourier multiplier on functions defined
on $\gamma$. As a consequence, if we decompose $\tilde{f}(t) = \sum_{n \in
\Z} c_n e^{2i\pi nt/\ell}$, we have:
\[ \begin{split} R_+(\lambda)\tilde{f} (t) & = \int_0^{+\infty} e^{-\lambda s} \tilde{f}(t+s) ds \\
& = \sum_{n \in \Z} c_n e^{2i\pi n t/\ell} \int_0^{+\infty} e^{-(\lambda-2 i \pi n / \ell)s} ds \\
& = \sum_{n \in \Z} \dfrac{c_n (\lambda +2i \pi n /\ell)}{\lambda^2 +4 \pi^2 n^2 /\ell^2} e^{2i\pi n t/\ell} \end{split} \]
Then:
\[ \langle R_+(\lambda)\tilde{f} , \tilde{f} \rangle_{L^2}  = \dfrac{1}{\ell} \int_0^{\ell} R_+(\lambda)\tilde{f}(t) \tilde{f}(t) dt  = \sum_{n \in \Z} \dfrac{|c_n|^2  (\lambda + 2i \pi n/ \ell)}{\lambda^2 + 4\pi^2 n^2/\ell^2} = \lambda  \sum_{n \in \Z} \dfrac{|c_n|^2}{\lambda^2 + 4\pi^2 n^2/\ell^2},  \]
by oddness of the imaginary part of the sum. In particular: \be
\label{equation:fourier} \dfrac{1}{\ell} \int_0^{\ell}
R_+(\lambda)\tilde{f}(t) \tilde{f}(t) dt \geq \dfrac{|c_0|^2}{\lambda} =
\dfrac{1}{\lambda} \left(\dfrac{1}{\ell} \int_0^\ell \tilde{f}(t) dt
\right)^2\ee Inserting (\ref{equation:fourier}) into
(\ref{equation:parry}), then applying Jensen's convexity inequality:
\[ \begin{split} \langle R_+(\lambda) f, f \rangle & \geq \lambda^{-1} \lim_{T \rightarrow \infty} \dfrac{1}{N(T)} \sum_{\ell(\gamma) \leq T} e^{\int_\gamma J^u} \left( \dfrac{1}{\ell(\gamma)} \int_0^{\ell(\gamma)}  f(\varphi_tz) dt \right)^2  \\
& \geq \lambda^{-1}  \lim_{T \rightarrow \infty} \left(\dfrac{1}{N(T)} \sum_{\ell(\gamma) \leq T} e^{\int_\gamma J^u} \dfrac{1}{\ell(\gamma)} \int_0^{\ell(\gamma)}  f(\varphi_tz) dt \right)^2 = \dfrac{1}{\lambda}\left(\int_{SM} f d\mu\right)^2, \end{split} \]
where we used again Parry's formula in the last equality.
\end{proof}

\subsection{The normal operator}

We now consider a smooth closed manifold $(M,g)$ with Anosov geodesic flow and define $\mc{M} := SM$, the unit tangent bundle (with respect to the metric $g$). We introduce \be \Pi_m \coloneqq {\pi_m}_* (\Pi + \mathbf{1}\otimes
\mathbf{1}) \pi_m^* \ee Recall from
\S\ref{sssection:projection-solenoidal-tensors} that given $(x,\xi) \in
T^*M$, the space $\otimes_S^m T^*_xM$ decomposes as the direct sum
\[ \begin{split} \otimes_S^m T^*_xM & = \ran \left(\sigma_D(x,\xi)|_{\otimes_S^{m-1} T^*_xM}\right) \oplus \ker \left(\sigma_{D^*}(x,\xi)|_{\otimes_S^{m} T^*_xM}\right) \\
& = \ran \left(\sigma j_\xi|_{\otimes_S^{m-1} T^*_xM}\right) \oplus \ker \left(i_\xi|_{\otimes_S^{m} T^*_xM}\right) \end{split} \]
The projection on the right space parallel to the left space is denoted by
$\pi_{\ker i_\xi}$ and $\Op(\pi_{\ker i_\xi}) = \pi_{\ker D^*} +
S$ by Lemma \ref{lemma:projection-solenoidal}, where $S \in \Psi^{-1}$ and $\Op$ is any quantization on $M$ (see \cite[Section 6.4]{Shubin-01} for instance). Here, $\Psi^m$ denotes the set of pseudodifferential operators of order $m \in \R$ and we will denote by $S^m$ the class of usual symbols of order $m$. Given $P \in \Psi^m$, we will denote by $\sigma_m$ its principal symbol. The following structure theorem is crucial in the sequel. It can be seen as a more intrinsic version of \cite[Theorem 2.1]{Sharafutdinov-Skokan-Uhlmann-05}.

\begin{theorem}
\label{theorem:microlocal-theorem} $\Pi_m$ is a pseudodifferential operator
of order $-1$ with principal symbol
\[ \sigma_m \coloneqq \sigma_{\Pi_m} : (x,\xi) \mapsto \frac{2\pi}{C_{n,m}} |\xi|^{-1}\pi_{\ker i_\xi} {\pi_m}_* \pi_m^* \pi_{\ker i_\xi},\]
with $C_{n,m} = \int_0^{\pi} \sin^{n-1+2m}(\varphi) d\varphi$.
\end{theorem}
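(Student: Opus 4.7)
The strategy is to compute the Schwartz kernel of $\Pi_m$ on $M\times M$ modulo smoothing, exhibit it near the diagonal as a classical $|x-y|^{-n}$-type X-ray normal operator kernel, and then read off the principal symbol by Fourier transform, using Lemma~\ref{lemma:symbole-projection} to put the answer in the stated intrinsic form.

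The starting point is the dynamical representation provided by Proposition~\ref{proposition:caracterisation}(2): after reducing to the mean-zero case (the term coming from $\mathbf{1}\otimes\mathbf{1}$ is smooth), I would write
\[
\langle \Pi_m f, h\rangle
= \int_{-\infty}^{+\infty}\!\int_{SM} \pi_m^* f(\varphi_t(x,v))\,\pi_m^* h(x,v)\,d\mu(x,v)\,dt,
\]
with convergence of the $t$-integral ensured by the exponential mixing of contact Anosov flows~\cite{Liverani-04}. I would then split $\int_{\mathbb{R}} = \int_{|t|\leq T} + \int_{|t|>T}$ with $T$ smaller than the injectivity radius. The long-time part is smoothing on $M\times M$: after integrating over the fiber $S_xM$, the map $(x,v,t)\mapsto \pi(\varphi_t(x,v))$ is a submersion once $|t|$ stays away from $0$ and away from the discrete return times (which are harmless thanks to the exponential decay of correlations combined with the averaging in $v$). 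Only the short-time piece contributes to the principal symbol.

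For the short-time piece, working in a normal chart around $x$ and using $\varphi_t(x,v)=(\exp_x(tv),P_t v)$ with $P_t$ parallel transport, one has
\[
(\Pi_m f)_x(u^m) = \int_{S_xM}\int_{-T}^{T} f_{\exp_x(tv)}((P_tv)^m)\,dt\cdot g_x(v,u)^m\,dS(v) + \text{smoothing}.
\]
The replacements $P_tv\rightsquigarrow v$ and $\exp_x(tv)\rightsquigarrow x+tv$ are $O(t^2)$, hence produce operators whose symbols are of order $\leq -2$ and are absorbed in lower order terms. Changing variables $(t,v)\mapsto w=tv\in T_xM$, the Jacobian gives $dt\,dS(v)=|w|^{-n}\,dw$ (with a factor $2$ from the double cover $(t,v)\leftrightarrow(-t,-v)$, which is consistent since $v^{\otimes 2m}$ is even in $v$), so that modulo smoothing the operator becomes the convolution
\[
(\Pi_m f)_x(u^m) \simeq 2\int_{\mathbb{R}^{n+1}} f_{x+w}(\hat w^m)\, g(\hat w,u)^m\, |w|^{-n}\,dw, \qquad \hat w:=w/|w|.
\]

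It remains to Fourier-transform this kernel. Passing to polar coordinates $w=rv$ with $v\in \mathbb{S}_x^n$, using $\int_0^{+\infty} e^{-ir\langle \xi,v\rangle}\,dr = \pi\delta(\langle \xi,v\rangle) - i\,\mathrm{P.V.}\,1/\langle \xi,v\rangle$, and noting that the integrand $\pi_m^*f(v)\,\pi_m^*h(v)$ is even in $v$ so that the principal value part integrates to zero on $\mathbb{S}_x^n$, I would obtain
\[
\langle \sigma_{\Pi_m}(x,\xi)f,h\rangle = 2\pi|\xi|^{-1}\int_{\langle\xi,v\rangle=0} \pi_m^*f(v)\,\pi_m^*h(v)\,dS_\xi(v).
\]
Lemma~\ref{lemma:symbole-projection} converts the right-hand side precisely into
$(2\pi/C_{n,m})|\xi|^{-1}\langle \pi_{\ker i_\xi}{\pi_m}_*\pi_m^*\pi_{\ker i_\xi} f,h\rangle$, yielding the claimed formula and showing that $\Pi_m$ is polyhomogeneous of order $-1$.

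\textbf{Main obstacle.} The routine calculations are the polar change of variables and the Fourier computation; the genuine difficulty is the justification that the $|t|>T$ part of the integral, together with all remainders produced when replacing $\varphi_t$ by its first-order Taylor expansion, contribute only lower-order (or smoothing) operators. This requires more than pointwise mixing: one needs that after the crucial fiber-integration by ${\pi_m}_*$, the oscillation along the flow disappears and the resulting kernel on $M\times M$ is $C^\infty$ away from the diagonal. The cleanest way to handle this is via the wavefront analysis of $R_0^\pm$ on anisotropic Sobolev spaces from~\cite{Faure-Sjostrand-11, Guillarmou-17-1}, checking that ${\pi_m}_*\mathrm{WF}(R_0^\pm)\pi_m^*$ meets $T^*(M\times M)\setminus 0$ only in the conormal to the diagonal.
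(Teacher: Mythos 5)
Your proof is correct in outline and takes a genuinely different route to the symbol computation than the paper does, so let me compare the two.

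For the principal symbol, you pass to a Euclidean approximation in normal coordinates, rewrite the short-time piece as a convolution against $|w|^{-n}$, and read off the symbol by the $\int_0^\infty e^{-ir\lambda}\,dr = \pi\delta(\lambda)-i\,\mathrm{P.V.}\,\lambda^{-1}$ identity, with the principal-value part killed by parity. This is essentially the classical Sharafutdinov computation of the normal operator symbol, and it is correct: the Jacobian bookkeeping, the factor of $2$ from the double cover, and the parity argument all check out, and the coarea step $\int_{\mathbb{S}^n}\delta(\langle\xi,v\rangle)c(v)\,dS = |\xi|^{-1}\int_{\langle\xi,v\rangle=0}c\,dS_\xi$ produces exactly the $2\pi|\xi|^{-1}\int_{\langle\xi,v\rangle=0}\pi_m^*f\,\pi_m^*h\,dS_\xi$ that the paper obtains and then feeds into Lemma~\ref{lemma:symbole-projection}. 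The paper instead uses oscillatory testing, plugging $e^{iS/h}f_1$ into $\Pi_m$ and running a stationary phase argument in $(t,\varphi)$ with critical point $(0,\pi/2)$; this is equivalent but bypasses the explicit flattening step, and it localizes cleanly because of the cutoff $\chi$ introduced at the start. Both routes are sound and give the same answer; yours is a bit more hands-on, the paper's is a bit more systematic.

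The one place your write-up has a real gap is the claim, made in passing, that the $|t|>T$ part is smoothing because $(x,v,t)\mapsto\pi(\varphi_t(x,v))$ is a submersion away from $t=0$ and return times, "harmless thanks to exponential decay of correlations." That argument does not close: mixing controls the scalar quantity $\langle f\circ\varphi_t,h\rangle$ but says nothing about smoothness of the Schwartz kernel of the infinite-time integral, and a sharp cutoff at $T$ would also create boundary contributions you have not addressed. You in fact recognize this yourself in your "main obstacle" paragraph, and you point to the correct tool: the wavefront set analysis of $R_0^\pm$ from \cite{Faure-Sjostrand-11, Guillarmou-17-1}. That is precisely what the paper does, but in a cleaner way that makes the smoothing claim structural rather than asymptotic: it writes $\Pi_m$, via a smooth cutoff $\chi$ and integration by parts, as the short-time piece ${\pi_m}_*\int\chi(t)e^{-tX}\,dt\,\pi_m^*$ plus terms of the form ${\pi_m}_*R_0^\pm\int\chi'(t)e^{-tX}\,dt\,\pi_m^*$ (plus an explicitly smoothing rank-one term), and then invokes the wavefront analysis of \cite[Theorem~3.1]{Guillarmou-17-1} together with the absence of conjugate points to show the $R_0^\pm$ terms are smoothing. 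You should replace your submersion-plus-mixing sentence by this $R_0^\pm$-decomposition argument; as written, that step would not survive scrutiny. Likewise, the assertion that the $O(t^2)$ Taylor remainders produce symbols of order $\leq -2$ is true but should be stated as the standard fact that a kernel of the form $a(x,w)|w|^{-n}$ with $a$ smooth yields a polyhomogeneous operator whose principal symbol depends only on $a(x,0)$.
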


\begin{proof}
The fact that $\Pi_m$ is pseudodifferential was proved in
\cite{Guillarmou-17-1}. All is left to compute is the principal symbol of
$\Pi_m$. According to the proof in \cite[Theorem 3.1]{Guillarmou-17-1}, we can only consider
the integral in time between $(-\eps,\eps)$. Namely, given $\chi \in C^\infty_c(\R)$ a smooth cutoff function around $0$ whose support is contained in $(-\eps,\eps)$, one has: 
\[
\begin{split}
\Pi_m = {\pi_m}_* & \int_{-\eps}^{\eps} \chi(t) e^{-tX} dt \pi_m^* \\
& - {\pi_m}_* R_0^+ \int_0^{+\infty} \chi'(t) e^{-tX} dt \pi_m^* - {\pi_m}_* R_0^- \int_{-\infty}^0 \chi'(t) e^{-tX} dt \pi_m^* \\ & + \left(1-\int_{-\infty}^{+\infty} \chi(t) dt\right) {\pi_m}_* \mathbf{1}\otimes\mathbf{1}~ \pi_m^*
\end{split}
\]
On the right-hand side, the last term is obviously smoothing. Following the same computations as in \cite[Theorem 3.1]{Guillarmou-17-1}, one can prove that the second and the third terms are also smoothing (this stems from an argument on the wavefront set of the kernel of these operators, using the fact that there are no conjugate points in the manifold). Thus, the pseudodifferential behaviour of the operator $\Pi_m$ is encapsulated by the first term whose kernel has a support living in a neighborhood of the diagonal in $M \times M$. In the following, $\eps > 0$ is chosen
small enough (less than the injectivity radius at the point $x$).

Let us consider a smooth section $f_1 \in C^\infty(M,\otimes^m_S
T^*M)$ defined in a neighborhood of $x \in M$ and $f_2 \in \otimes^m_S
T_x^*M$, then:
\[
\begin{split}
\langle \sigma_m(x_0,\xi) f_1, f_2 \rangle_{x_0} & = \lim_{h \rightarrow 0} h^{-1} e^{-iS(x_0)/h} \langle \Pi_m(e^{iS(x)/h}f_1),f_2 \rangle_{x_0} \\
& = \lim_{h \rightarrow 0} h^{-1} e^{-iS(x_0)/h} \langle \Pi \pi_m^*(e^{iS(x)/h}f_1),\pi_m^*f_2 \rangle_{L^2(S_{x_0}M)},
\end{split}
\]
where $\xi = dS(x) \neq 0$. Here, it is assumed that $\text{Hess}_{x} S$ is
non-degenerate. We obtain:
\[
\begin{split}
& \langle \sigma_m(x,\xi) f_1,f_2 \rangle_{x_0} \\
& = \lim_{h \rightarrow 0} h^{-1} \int_{\Ss^n} \int_{-\eps}^{+\eps} e^{i/h(S(\gamma(t))-S(x))} \pi_m^*f_1(\gamma(t),\dot{\gamma}(t)) \pi_m^*f_2(x_0,v) \chi(t) dt  dv \\
& = \lim_{h \rightarrow 0} h^{-1} \int_{\Ss^{n-1}} \left( \int_0^\pi  \int_{-\eps}^{+\eps} e^{i/h(S(\gamma(t))-S(x))} \pi_m^*f_1(\gamma(t),\dot{\gamma}(t)) \pi_m^*f_2(x_0,v) \sin^{n-1}(\varphi) \chi(t) dt d\varphi \right) du
\end{split}
\]
where $\chi$ is a cutoff function with support in $(-\eps,\eps)$, $\gamma$
is the geodesic such that $\gamma(0)=x,\dot{\gamma}(0)=v$ and we have
decomposed $v = \cos(\varphi)n + \sin(\varphi)u$ with $n=\xi^\sharp/|\xi|=
dS(x)^\sharp/|dS(x)|$, $u \in \Ss^{n-1}$. We apply the stationary phase
lemma \cite[Theorem 3.13]{Zworski-12} uniformly in the $u \in \Ss^{n-1}$
variable. For fixed $u$, the phase is $\Phi : (t,\varphi) \mapsto
S(\gamma(t))-S(x)$ so $\partial_t \Phi(t,\varphi) = dS(\dot{\gamma}(t))$.
More generally if $\tilde{\Phi} : (t,v) \mapsto S(\gamma(t))-S(x)$ denotes the map defined for any $v \in \Ss^{n}$, then
\[ \partial_v \tilde{\Phi}(t,v) \cdot V = d\pi(d\varphi_t(x,v)\cdot V), \qquad \forall V \in \V, \]
where $\V = \ker d\pi_0$, with $\pi_0 : SM \rightarrow M$ the natural
projection. Since $(M,g)$ has no conjugate points,
$d\pi(d\varphi_t(x,v))\cdot V \neq 0$ as long as $t \neq 0$ and $V \in \V
\setminus \left\{0\right\}$. And $dS(\dot{\gamma}(0)) = dS(\cos(\varphi)n +
\sin(\varphi)u) = \cos(\varphi)|dS(x)|=0$ if and only if $\varphi=\pi/2$.
So the only critical point of $\Phi$ is $(t=0,\varphi=\pi/2)$. Let us also
remark that
\[
\text{Hess}_{(0,\pi/2)} \Phi = \begin{pmatrix} \text{Hess}_{x} S(u,u) &
-|dS(x)| \\  -|dS(x)| & 0 \end{pmatrix} \] is non-degenerate with
determinant $-|\xi|^2$, so the stationary phase lemma can be applied and we
get:
\[
\begin{split}
 \int_0^\pi  \int_{-\eps}^{+\eps}& e^{i/h(S(\gamma(t))-S(x_0))} \pi_m^*f_1(\gamma(t),\dot{\gamma}(t)) \pi_m^*f_2(x_0,v) \sin^{n-1}(\varphi) dt d\varphi \\
 & \sim_{h \rightarrow 0}  2\pi h |\xi|^{-1} \pi_m^*f_1(x_0,u)\pi_m^*f_2(x_0,u).
\end{split}
\]
Eventually, we obtain:
\[ \langle \sigma_m(x,\xi) f_1,f_2 \rangle_{x_0} = \dfrac{2\pi}{|\xi|} \int_{\left\{\langle \xi,v\rangle=0\right\}} \pi_m^*f_1(v) \pi_m^*f_2(v) dS_\xi(v), \]
where $dS_\xi$ is the canonical measure induced on the $n-1$-dimensional
sphere $\Ss_xM \cap \left\{\langle \xi,v\rangle=0\right\}$. The sought
result then follows from Lemma \ref{lemma:symbole-projection}.
\end{proof}

\subsection{Ellipticity, injectivity on solenoidal tensors}

\begin{lemma}
\label{theorem:ellipticite-pim} The operator $\Pi_m$ is elliptic on solenoidal tensors,
that is there exists pseudodifferential operators $Q$ and $R$ of respective
order $1$ and $-\infty$ such that:
\[ Q\Pi_m = \pi_{\ker D^*} + R \]
\end{lemma}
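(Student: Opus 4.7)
The plan is to construct an elliptic parametrix by inverting the principal symbol of $\Pi_m$ on the subspace where it is non-degenerate, then iterate to get a smoothing remainder. By Theorem~\ref{theorem:microlocal-theorem}, the principal symbol
\[
\sigma_m(x,\xi) = \frac{2\pi}{C_{n,m}|\xi|}\, \pi_{\ker i_\xi}\, {\pi_m}_*\pi_m^*\, \pi_{\ker i_\xi}
\]
is a self-adjoint endomorphism of $\otimes^m_S T^*_xM$ that kills $\ran(\sigma j_\xi)$ and sends $\ker i_\xi$ into itself. I would first verify that the restriction $\sigma_m(x,\xi)|_{\ker i_\xi}$ is an isomorphism: by Lemma~\ref{lemma:symbole-projection}, for $f\in \ker i_\xi$,
\[
\langle \sigma_m(x,\xi) f, f\rangle = \frac{2\pi}{|\xi|} \int_{\{\langle\xi,v\rangle=0\}} (\pi_m^*f)(v)^2\, dS_\xi(v),
\]
which vanishes only if $\pi_m^*f$ vanishes on an equatorial sphere; combined with the decomposition of \S\ref{ssection:spherical-harmonics}, this forces $f=0$.

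Next, I would define $q(x,\xi) \in S^1$ by $q(x,\xi) \coloneqq (\sigma_m(x,\xi)|_{\ker i_\xi})^{-1} \circ \pi_{\ker i_\xi}$, so that $q\,\sigma_m = \pi_{\ker i_\xi}$. By Lemma~\ref{lemma:projection-solenoidal}, $\pi_{\ker i_\xi}$ is precisely the principal symbol of $\pi_{\ker D^*}$. Quantizing, $Q_0 \coloneqq \Op(q) \in \Psi^1$ satisfies $R_1 \coloneqq Q_0\Pi_m - \pi_{\ker D^*} \in \Psi^{-1}$.

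To upgrade the remainder to a smoothing operator, I would exploit the exact identity $\Pi_m = \pi_{\ker D^*}\,\Pi_m\,\pi_{\ker D^*}$. This follows from Theorem~\ref{theorem:guillarmou} ($X\Pi = \Pi X = 0$) together with the commutation $X\pi_m^* = \pi_{m+1}^* D$ and its adjoint $D^*{\pi_{m+1}}_* = -{\pi_m}_* X$ (using $X^* = -X$ on $L^2(\mu)$), which yield $D^*\Pi_m = 0$ and $\Pi_m D = 0$. Consequently, $R_1 = R_1 \pi_{\ker D^*}$, so its principal symbol can again be inverted by $q$; iterating produces correctors $Q_k \in \Psi^{1-k}$ that lower the order of the error by one at each step, and asymptotic summation $Q \sim \sum_k Q_k$ yields $Q \in \Psi^1$ with $Q\Pi_m = \pi_{\ker D^*} + R$ for some $R \in \Psi^{-\infty}$.

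The main obstacle is the last step: because $\sigma_m$ is not invertible as an endomorphism of the full fiber $\otimes^m_S T^*_xM$, the textbook elliptic parametrix construction does not apply verbatim. The identity $\Pi_m = \Pi_m \pi_{\ker D^*}$ must be used throughout the iteration to keep each corrector confined to the solenoidal direction, where the symbol is genuinely invertible.
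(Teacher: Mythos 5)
Your proposal is correct and follows essentially the same strategy as the paper: invert the principal symbol $\sigma_m$ on $\ker i_\xi$ (extending by zero on $\ran \sigma j_\xi$), quantize to get a first-order parametrix with a $\Psi^{-1}$ remainder, and then upgrade to a smoothing remainder by observing that the remainder factors through $\pi_{\ker D^*}$ because $\Pi_m D = 0$. The paper's proof asserts $R'D=0$ without justification and phrases the iteration as a Neumann-series parametrix for $\mathbbm{1}-R'$ applied from the left, whereas you spell out why the symbol is genuinely invertible on $\ker i_\xi$ via Lemma~\ref{lemma:symbole-projection} and derive $\Pi_m D = D^*\Pi_m = 0$ from $X\Pi = \Pi X = 0$ — useful details, but the same argument.
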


\begin{proof}
We define
\[ \tilde{q}(x,\xi)= \left\{ \begin{array}{ll} 0, & \text{ on }\ran (\sigma j_\xi) \\ \dfrac{C_{n,m}}{2\pi} |\xi|(\pi_{\ker i_\xi} {\pi_m}_*\pi_m^* \pi_{\ker i_\xi})^{-1}, & \text{ on }\ker (i_\xi) \end{array} \right. \]
and $q(x,\xi) = (1-\chi(x,\xi))\tilde{q}(x,\xi)$ for some cutoff function $\chi \in C^\infty_c(T^*M)$ around the zero section. By construction, $\Op(q)\Pi_m = \pi_{\ker D^*} - R'$ with $R' \in
\Psi^{-1}$. Let $r' = \sigma_{R'}$ and define $a \sim \sum_{k=0}^\infty
r'^k$. Then $\Op(a)$ is a microlocal inverse for $\mathbbm{1}-R'$ that is $\Op(a)(\mathbbm{1}-R') \in \Psi^{-\infty}$. Since $R'D = 0$, we
obtain that $R'=R'\pi_{\ker D^*}$ and thus
\[ \underbrace{\Op(a)\Op(q)}_{=Q}\Pi_m = \Op(a)(\mathbbm{1}-R')\pi_{\ker D^*}= \pi_{\ker D^*} + R, \]
where $R$ is a smoothing operator.
\end{proof}

From now on, we assume that the X-ray transform is injective on solenoidal
tensors.

\begin{lemma}
\label{lemma:injectivite-pim} If $I_m$ is solenoidal injective, then $\Pi_m$ is
injective on $H_{\sol}^s(M,\otimes^m_S T^*M)$, for all $s \in \R$.
\end{lemma}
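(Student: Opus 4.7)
The plan is to combine elliptic regularity for $\Pi_m$ with the positivity of the scalar operator $\Pi$, and then reduce to the s-injectivity of $I_m$ via an application of Theorem \ref{theorem:guillarmou} and a direct integration along closed orbits.

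Suppose $f \in H^s_{\sol}(M,\otimes^m_S T^*M)$ satisfies $\Pi_m f = 0$. My first step would be a bootstrap using Lemma \ref{theorem:ellipticite-pim}: since $Q \Pi_m = \pi_{\ker D^*} + R$ with $R$ smoothing and $\pi_{\ker D^*} f = f$ by the solenoidal assumption, one gets $f = -Rf$, which immediately upgrades $f$ to a smooth solenoidal tensor. This reduces the proof to the case $s = \infty$ and makes all the subsequent manipulations on $\pi_m^* f \in C^\infty(SM)$ rigorous.

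Next I would show that $\Pi \pi_m^* f = 0$ and $\int_{SM} \pi_m^* f\, d\mu = 0$. Pairing $\Pi_m f = 0$ with $f$ and using the definition $\Pi_m = {\pi_m}_*(\Pi + \mathbf{1}\otimes\mathbf{1})\pi_m^*$ together with the adjunction $\langle \pi_m^* \cdot , \cdot \rangle_{L^2(SM)} = \langle \cdot, {\pi_m}_* \cdot \rangle$ gives
\[
0 = \langle \Pi_m f, f \rangle = \langle \Pi \pi_m^* f, \pi_m^* f\rangle_{L^2(SM)} + \bigl(\langle \pi_m^* f, \mathbf{1}\rangle\bigr)^2.
\]
Both terms are nonnegative (the first by Lemma \ref{lemma:positivite-operateur-pi}), so both vanish. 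For the first one, since $\Pi$ is self-adjoint and positive semi-definite, the Cauchy--Schwarz inequality $|\langle \Pi v, w\rangle|^2 \leq \langle \Pi v, v\rangle \langle \Pi w, w\rangle$ for the induced bilinear form then forces $\Pi \pi_m^* f = 0$ as a distribution.

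Theorem \ref{theorem:guillarmou} now applies: there exists $u \in H^s(SM)$ (for all $s$, by elliptic regularity of $X$ on the cohomological equation since $\pi_m^* f$ is smooth) solving $X u = \pi_m^* f$. Taking $s$ large enough so that $u \in C^0(SM)$, for any closed orbit $\gamma$ of length $\ell(\gamma)$ based at $x$, we integrate:
\[
\ell(\gamma) I_m f(\gamma) = \int_0^{\ell(\gamma)} \pi_m^* f(\varphi_t x)\, dt = \int_0^{\ell(\gamma)} X u(\varphi_t x)\, dt = u(\varphi_{\ell(\gamma)} x) - u(x) = 0.
\]
Hence $I_m f \equiv 0$, and the s-injectivity assumption on $I_m$ forces $f = 0$. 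The only genuinely nontrivial step is passing from the vanishing of the quadratic form $\langle \Pi \pi_m^* f, \pi_m^* f \rangle$ to the operator identity $\Pi \pi_m^* f = 0$; everything else is a direct assembly of the results already established in the preceding subsections.
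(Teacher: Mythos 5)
Your proof is correct and follows essentially the same route as the paper's: elliptic bootstrap to smoothness, vanishing of $\langle \Pi_m f, f\rangle$ forcing both $\langle \Pi \pi_m^* f, \pi_m^* f\rangle = 0$ and $\int_{SM}\pi_m^*f\,d\mu=0$, then Theorem~\ref{theorem:guillarmou} to produce the coboundary, and s-injectivity of $I_m$ to conclude. The only technical variation is at the positivity step, where you use the Cauchy--Schwarz inequality for the positive semi-definite form $(v,w)\mapsto\langle\Pi v,w\rangle$ while the paper constructs a square root of the operator $(1+\Delta)^{-s}\Pi$; the two devices are interchangeable here. One imprecision worth fixing: you attribute the fact that $u$ can be taken of arbitrary Sobolev (hence continuous) regularity to ``elliptic regularity of $X$,'' but $X$ is a vector field and not elliptic --- the regularity of $u$ is instead part of the statement of Theorem~\ref{theorem:guillarmou}, which gives $u\in H^s$ for every $s$ since $\pi_m^*f\in H^s$ for every $s$.
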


\begin{proof}
We fix $s \in \R$. We assume that $\Pi_m f = 0$ for some $f \in
H_{\sol}^s(M,\otimes^m_S T^*M)$. By ellipticity of the operator, we get that $f
\in C_{\sol}^\infty(M,\otimes^m_S T^*M)$. And:
\[ \begin{split} \langle \Pi_m f , f \rangle_{L^2} & = \langle \Pi \pi_m^* f, \pi_m^* f \rangle_{L^2} + \left(\int_{SM} \pi_m^*f d\mu\right)^2  \\
& = \langle (1 +\Delta_m)^{-s} \Pi \pi_m^* f, \pi_m^* f \rangle_{H^s} + \left(\int_{SM} \pi_m^*f d\mu\right)^2 = 0. \end{split} \]
Here, the Laplacian $\Delta_m$ is the one introduced in \S\ref{sssection:projection-solenoidal-tensors}. The scalar product on $H^s$ is $\langle f,h\rangle_{H^s} := \langle (1+\Delta_m)^{s/2} f, (1+\Delta_m)^{s/2}h\rangle_{L^2}$. By Lemma \ref{lemma:positivite-operateur-pi}, since $\langle \Pi
\pi_m^*f,\pi_m^*f\rangle \geq 0$, we obtain that $\int_{SM} \pi_m^*f d\mu =
0$. Moreover, $(1 +\Delta_m)^{-s} \Pi$ is bounded and positive (hence selfadjoint) on $H^s$ so
there exists a square root $R : H^s \rightarrow H^s$, that is a bounded
positive operator satisfying $(1 +\Delta_m)^{-s} \Pi = R^* R$, where $R^*$
is the adjoint on $H^s$. Then:
\[ \langle (1 +\Delta_m)^{-s} \Pi \pi_m^* f, \pi_m^* f \rangle_{H^s} = 0 = \|R \pi_m^* f\|^2_{H^s} \]
This yields $(1 +\Delta_m)^{-s} \Pi \pi_m^* f = 0$ so $\Pi \pi_m^* f = 0$.
By Theorem \ref{theorem:guillarmou}, there exists $u \in C^\infty(SM)$ such
that $\pi_m^* f = Xu$ so $f \in \ker I_m \cap \ker D^*$. By $s$-injectivity
of the X-ray transform, we get $f \equiv 0$.
\end{proof}

A direct consequence of Lemma \ref{lemma:injectivite-pim} and Theorem
\ref{theorem:ellipticite-pim} is the

\begin{theorem}
If $I_m$ is solenoidal injective, then there exists a pseudodifferential operator $Q'$ of order $1$ such that:
$Q'\Pi_m = \pi_{\ker D^*}$.
\end{theorem}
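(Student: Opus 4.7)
The plan is to upgrade the elliptic parametrix identity from Lemma~\ref{theorem:ellipticite-pim} to an exact factorisation, by exploiting the solenoidal injectivity of $\Pi_m$. From that lemma we have $Q\Pi_m = \pi_{\ker D^*} + R$ with $Q\in\Psi^1$ and $R\in\Psi^{-\infty}$; my goal is to correct $Q$ by a smoothing operator so that the remainder disappears. As a preparation, the identity $\pi_m^* D = X\pi_{m-1}^*$ from Section~\ref{appendix:tensors}, combined with $\Pi X = 0$ (Theorem~\ref{theorem:guillarmou}) and the vanishing of $\mathbf{1}\otimes\mathbf{1}$ on $X$-exact functions, gives $\Pi_m D = 0$. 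Self-adjointness of $\Pi_m$ then yields $D^*\Pi_m = 0$, hence $\pi_{\ker D^*}\Pi_m = \Pi_m = \Pi_m\pi_{\ker D^*}$. Evaluating the parametrix identity on a potential tensor $Dp$ also forces $R = R\pi_{\ker D^*}$.

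The crux of the argument is to promote $\Pi_m$ to a Sobolev isomorphism on its natural range. Consider
\[
\widetilde\Pi_m\colon H^s_{\sol}(M,\otimes^m_S T^*M) \longrightarrow H^{s+1}_{\sol}(M,\otimes^m_S T^*M)
\]
obtained by restriction and co-restriction. Ellipticity on solenoidal tensors (Lemma~\ref{theorem:ellipticite-pim}) makes $\widetilde\Pi_m$ Fredholm; its kernel is trivial by Lemma~\ref{lemma:injectivite-pim}; and its cokernel, identified via the $L^2$ pairing with the kernel of $\Pi_m\colon H^{-s-1}_{\sol}\to H^{-s}_{\sol}$, is also trivial by that same lemma applied at the dual Sobolev scale. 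Hence $\widetilde\Pi_m$ is a bijection for every $s\in\R$, with bounded inverse $\widetilde\Pi_m^{-1}$ of Sobolev order $-1$.

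With these pieces in hand I would simply set
\[
Q' := \pi_{\ker D^*}\, Q\, \pi_{\ker D^*} \;-\; \pi_{\ker D^*}\, R\, \widetilde\Pi_m^{-1}\, \pi_{\ker D^*}.
\]
The first summand lies in $\Psi^0 \circ \Psi^1 \circ \Psi^0 \subset \Psi^1$. The second is bounded between all Sobolev scales (the order $-1$ shift of $\widetilde\Pi_m^{-1}$ is absorbed by the smoothing $R$), hence it lies in $\Psi^{-\infty}$, and so $Q'\in\Psi^1$. A direct computation, using successively $\pi_{\ker D^*}\Pi_m = \Pi_m$, the parametrix identity $Q\Pi_m = \pi_{\ker D^*}+R$, the relation $\widetilde\Pi_m^{-1}\Pi_m = \pi_{\ker D^*}$ (immediate from $\Pi_m = \widetilde\Pi_m\,\pi_{\ker D^*}$ on all of $H^s$), and finally $R = R\pi_{\ker D^*}$, collapses the two summands to yield $Q'\Pi_m = \pi_{\ker D^*}$.

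The main obstacle is the middle step: checking rigorously that the restriction/co-restriction of $\Pi_m$ to solenoidal Sobolev spaces is a genuine Fredholm operator, and that $L^2$ self-adjointness really does identify its cokernel with the kernel of $\Pi_m$ at the dual Sobolev index (so that s-injectivity on every scale $H^{-s-1}_{\sol}$ — and not just, say, $L^2_{\sol}$ — is what kills the cokernel). Once this isomorphism is established, the construction of $Q'$ and the verification of $Q'\Pi_m = \pi_{\ker D^*}$ are purely formal manipulations within the pseudodifferential calculus.
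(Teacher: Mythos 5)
Your argument is correct and follows essentially the same route as the paper: both establish that $\Pi_m$ restricted and co-restricted to the solenoidal Sobolev scale is Fredholm (from ellipticity), of index zero (from self-adjointness), and injective (from Lemma~\ref{lemma:injectivite-pim}), hence invertible, and then use this inverse together with the parametrix from Lemma~\ref{theorem:ellipticite-pim} to produce $Q'$. The paper defines $Q' := \pi_{\ker D^*}\Pi_m^{-1}\pi_{\ker D^*}$ directly and uses the parametrix identity to see that it differs from $Q\pi_{\ker D^*}$ by a smoothing term, whereas you build $Q'$ as the parametrix corrected by a smoothing term involving $\widetilde\Pi_m^{-1}$ — these are two equivalent packagings of the same idea, and your duality-based argument for the vanishing of the cokernel is a standard alternative to invoking index independence across the Sobolev scale.
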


\begin{proof}
The operator $\Pi_m$ is elliptic of order $-1$ on $\ker D^*$, thus
Fredholm as an operator $H^s_{\sol}(M,\otimes^m_S T^*M) \rightarrow H^{s+1}_{\sol}(M,\otimes^m_S T^*M)$ for all $s \in \R$. It is selfadjoint on $H_{\sol}^{-1/2}(M,\otimes^m_S T^*M)$, thus Fredholm of index $0$ (the index being independent of the Sobolev space considered, see \cite[Theorem 8.1]{Shubin-01}), and injective, thus
invertible on $H^s_{\sol}(M,\otimes^m_S T^*M)$. We multiply the equality $Q\Pi_m = \pi_{\ker D^*}
+ R$ on the right by $Q' \coloneqq \pi_{\ker D^*} \Pi_m^{-1} \pi_{\ker
D^*}$:
\[ Q\Pi_mQ' = Q \underbrace{\Pi_m \pi_{\ker D^*}}_{=\Pi_m} \Pi_m^{-1} \pi_{\ker D^*} = Q \pi_{\ker D^*} = Q' + RQ' \]
As a consequence, $Q' = Q \pi_{\ker D^*} + \text{smoothing}$ so it is a
pseudodifferential operator of order $1$. And $Q'\Pi_m = \pi_{\ker D^*}$.
\end{proof}

This yields the following stability estimate:

\begin{lemma}
\label{lemma:stabilite-pim} If $I_m$ is solenoidal injective, then for all $s \in \R$, there exists a constant
$C\coloneqq C(s) > 0$ such that:
\[ \forall f \in H^s_{\sol}(M,\otimes^m_S T^*M), \qquad \|f\|_{H^s} \leq C\|\Pi_mf\|_{H^{s+1}} \]
\end{lemma}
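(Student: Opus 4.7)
The plan is to read off this stability estimate as an immediate consequence of the left-parametrix identity $Q'\Pi_m = \pi_{\ker D^*}$ just established. The key point is that $Q'$ is a genuine pseudodifferential operator of order $1$ (not merely a left inverse modulo smoothing), so it maps $H^{s+1}$ continuously into $H^s$ with a norm bound $\|Q'\|_{H^{s+1}\to H^s} \leq C(s)$ depending only on $s$.

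Given $f \in H^s_{\sol}(M,\otimes^m_S T^*M)$, I would first observe that since $f$ is solenoidal we have $\pi_{\ker D^*}f = f$. Applying the identity $Q'\Pi_m = \pi_{\ker D^*}$ to $f$ therefore yields the exact reconstruction formula
\[
f = Q'\Pi_m f.
\]
Taking $H^s$-norms and using continuity of $Q'$ on Sobolev scales gives
\[
\|f\|_{H^s} = \|Q'\Pi_m f\|_{H^s} \leq \|Q'\|_{H^{s+1}\to H^s}\,\|\Pi_m f\|_{H^{s+1}} \leq C(s)\,\|\Pi_m f\|_{H^{s+1}},
\]
which is exactly the claim.

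There is essentially no obstacle here: the solenoidal injectivity hypothesis was already fully used upstream to invert $\Pi_m$ on $H^s_{\sol}$ and to upgrade the ellipticity identity $Q\Pi_m = \pi_{\ker D^*} + R$ (with $R$ smoothing) into the exact identity $Q'\Pi_m = \pi_{\ker D^*}$. The only thing worth emphasizing is that we do \emph{not} need $\Pi_m f$ to itself be solenoidal for the argument to go through, since $Q'$ acts on all of $H^{s+1}(M,\otimes^m_S T^*M)$. Thus the lemma is a direct corollary of the preceding theorem together with the standard Sobolev mapping property of pseudodifferential operators of order $1$.
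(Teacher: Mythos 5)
Your proof is correct and matches the paper's intent exactly: the paper states this lemma as an immediate consequence of the preceding theorem ($Q'\Pi_m = \pi_{\ker D^*}$ with $Q'\in\Psi^1$) without writing out the argument, and your derivation — apply the identity, use $\pi_{\ker D^*}f=f$ for solenoidal $f$, then invoke the $H^{s+1}\to H^s$ boundedness of an order-$1$ pseudodifferential operator — is precisely what is meant.
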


%

\subsection{Stability estimates for the X-ray transform}

Before going on with the proof of Theorem \ref{theorem:xray}, let us recall the definition \emph{Hölder-Zygmund spaces}. Let $\psi \in C^\infty_c(\R)$ be a smooth cutoff function with support in $[-2,2]$ and such that $\psi \equiv 1$ on $[-1,1]$. For $j \in \N$, we introduce the functions $\varphi_j \in C^\infty_c(T^*M)$ defined by $\varphi_0(x,\xi) := \psi(|\xi|), \varphi_j(x,\xi) := \psi(2^{-j}|\xi|) - \psi(2^{-j+1}|\xi|)$, for $j \geq 1$ with $(x,\xi) \in T^*M$, $|\cdot|$ being the norm induced by $g$ on the cotangent bundle. Since $\varphi_j$ is a symbol in $S^{-\infty}$, one observes that the operators $\Op(\varphi_j)$ are smoothing.

For $s \in \R$, we define $C^s_*(M)$, the \emph{Hölder-Zygmund space of order $s$} as the completion of $C^\infty(M)$ with respect to the norm
\[
 \|u\|_{C^s_*} := \sup_{j \in \N} 2^{js} \|\Op(\varphi_j)u\|_{L^\infty},
\]
and we recall (see \cite[Appendix A, A.1.8]{Taylor-91} for instance) that a pseudodifferential operator $P \in \Psi^m(M)$ of order $m \in \R$ is bounded as an operator $C^{s+m}_*(M) \rightarrow C_*^s(M)$, for all $s \in \R$. Note that the previous definition of Hölder-Zygmund spaces can be easily generalized to sections of a vector bundle. When $s \in (0,1)$, it is a well-known fact that the space $C^s_*(M)$ coincide with $C^s(M)$, the space of Hölder-continuous functions, with equivalent norms $\|u\|_{C^s_*} \asymp \|u\|_{C^s}$. The Hölder-Zygmund spaces correspond to the Besov spaces $B^s_{q,r}(M)$ with $q = r = +\infty$ while the Sobolev spaces $H^s(M)$ correspond to the choice $q=r=2$. Here:
\[
\|u\|_{B^s_{q,r}} := \left(\sum_{j=0}^{+\infty} \|2^{sj}\Op(\varphi_j)u\|_{L^q}^r \right)^{1/r}
\]
In particular, Lemma \ref{lemma:stabilite-pim} can be upgraded to:
\begin{lemma}
\label{lemma:stabilite-pim2}  If $I_m$ is solenoidal injective, then for all $s \in \R$, there exists a constant
$C\coloneqq C(s) > 0$ such that:
\[ \forall f \in C^s_{*,\sol}(M,\otimes^m_S T^*M), \qquad \|f\|_{C_*^s} \leq C\|\Pi_mf\|_{C_*^{s+1}} \]
\end{lemma}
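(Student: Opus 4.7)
The plan is to observe that this estimate is a direct corollary of the operator identity $Q'\Pi_m = \pi_{\ker D^*}$ established in the theorem preceding Lemma \ref{lemma:stabilite-pim}, combined with the Hölder--Zygmund mapping properties of pseudodifferential operators recalled in the paragraph above the statement. In other words, the proof of Lemma \ref{lemma:stabilite-pim} should transfer verbatim, simply by swapping the Sobolev continuity $\Psi^m : H^{s+m} \to H^s$ for its Hölder--Zygmund analogue $\Psi^m : C^{s+m}_* \to C^s_*$.

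More precisely, I would proceed in three short steps. First, invoke the theorem giving a parametrix $Q' \in \Psi^1(M,\otimes_S^m T^*M)$ with $Q'\Pi_m = \pi_{\ker D^*}$ (as an exact identity, not merely up to a smoothing error). Second, use that by hypothesis $f \in C^s_{*,\sol}$, so $D^*f = 0$ and hence $\pi_{\ker D^*} f = f$; applying $Q'$ to $\Pi_m f$ then gives the pointwise identity
\[
f \;=\; \pi_{\ker D^*} f \;=\; Q'\Pi_m f.
\]
Third, take the $C^s_*$ norm of both sides. Since $Q' \in \Psi^1$, the continuity statement recalled from \cite{Taylor-91} (extended to sections of the vector bundle $\otimes^m_S T^*M$ in the usual way, via local trivializations) provides a constant $C = C(s) > 0$ with $\|Q' u\|_{C^s_*} \leq C \|u\|_{C^{s+1}_*}$ for all $u$. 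Applied to $u = \Pi_m f$, this yields exactly $\|f\|_{C^s_*} \leq C \|\Pi_m f\|_{C^{s+1}_*}$.

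There is essentially no obstacle here: the hard work was done in proving the structural results for $\Pi_m$ (the pseudodifferential character and principal symbol in Theorem \ref{theorem:microlocal-theorem}, ellipticity on solenoidal tensors in Lemma \ref{theorem:ellipticite-pim}, and s-injectivity implying invertibility so that the exact left inverse $Q'$ exists). Once those are in hand, the only new ingredient is the well-known fact that $\Psi^m$ acts boundedly between Hölder--Zygmund spaces of the appropriate indices, which is why the proof collapses to a one-line application of $f = Q'\Pi_m f$. The mildest subtlety worth mentioning is to confirm the bundle-valued version of the Hölder--Zygmund boundedness; this follows from the scalar result by working in local trivializations, using the equivalence of the global norm $\|\cdot\|_{C^s_*}$ with its coordinate expressions.
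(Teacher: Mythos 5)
Your proposal is correct and coincides with the paper's intended argument: the paper states $Q'\Pi_m = \pi_{\ker D^*}$ with $Q' \in \Psi^1$ exactly so that the Sobolev estimate of Lemma~\ref{lemma:stabilite-pim} transfers to Hölder--Zygmund spaces via the boundedness $\Psi^1 : C^{s+1}_* \to C^s_*$ recalled just before the lemma, and you apply $f = \pi_{\ker D^*} f = Q'\Pi_m f$ and take norms in precisely the same way.
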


Eventually, we will need this last result:

\begin{lemma}
\label{lemma:boundedness-pi}
For all $s > 0$, the operator $\Pi : C_*^s(SM) \rightarrow C_*^{-s-(n+1)/2}(SM)$ is bounded.
\end{lemma}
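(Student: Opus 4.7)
The plan is to reduce the statement to the already-established boundedness $\Pi : H^t(SM) \to H^{-t}(SM)$ for $t > 0$, which combines $R_0^\pm : H^t \to H^{-t}$, recorded in the paragraph following~\eqref{equation:r_plus}, with $\Pi = R_0 + R_0^*$. The constant $-(n+1)/2$ in the target exponent matches exactly the Sobolev loss on the compact $(n+1)$-dimensional manifold $SM$, so no finer information about $\Pi$ beyond its $L^2$-based mapping properties is needed.

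Fix $s > 0$ and choose any $\delta \in (0, s)$. As a first step I would establish the embedding
\[
  C^s_*(SM) \hookrightarrow H^{s-\delta}(SM).
\]
Compactness of $SM$ gives $\|\Op(\varphi_j) u\|_{L^2} \leq \vol(SM)^{1/2}\, \|\Op(\varphi_j) u\|_{L^\infty}$ uniformly in $j$, which directly yields $B^s_{\infty,\infty}(SM) \hookrightarrow B^s_{2,\infty}(SM)$. A geometric summation then produces $B^s_{2,\infty}(SM) \hookrightarrow B^{s-\delta}_{2,2}(SM) = H^{s-\delta}(SM)$: if $\sup_j 2^{sj}\|\Op(\varphi_j) u\|_{L^2} \leq M$ then $\sum_j (2^{(s-\delta)j}\|\Op(\varphi_j) u\|_{L^2})^2 \leq M^2 \sum_j 2^{-2\delta j} < \infty$.

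Because $s - \delta > 0$, I can then apply $\Pi : H^{s-\delta}(SM) \to H^{-(s-\delta)}(SM)$. On the target side I would invoke the standard Sobolev-to-Besov embedding on the compact $(n+1)$-dimensional manifold $SM$,
\[
  H^t(SM) = B^t_{2,2}(SM) \hookrightarrow B^{t-(n+1)/2}_{\infty, 2}(SM) \hookrightarrow B^{t-(n+1)/2}_{\infty, \infty}(SM) = C^{t-(n+1)/2}_*(SM),
\]
valid for every real $t$; applied at $t = -(s-\delta)$ it gives $H^{-(s-\delta)}(SM) \hookrightarrow C^{-(s-\delta)-(n+1)/2}_*(SM)$. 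Since $\delta \geq 0$, this space embeds continuously into $C^{-s-(n+1)/2}_*(SM)$ (higher regularity index includes into lower), and composing all inclusions with $\Pi$ produces the bounded map claimed in the lemma.

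I do not foresee any genuine obstacle: the analytic heart is already packaged in the $L^2$-Sobolev bound on $\Pi$, and everything else is classical Besov theory on a compact manifold, which transfers to sections over $SM$ via charts and a partition of unity. The only quantitative point to monitor is the $\delta$-loss at the passage $B^s_{2,\infty} \hookrightarrow B^{s-\delta}_{2,2}$; this loss is harmless because the asymmetric shift $-(n+1)/2$ in the target provides ample room, and we can use the final inclusion $C^{-(s-\delta)-(n+1)/2}_* \hookrightarrow C^{-s-(n+1)/2}_*$ to absorb it.
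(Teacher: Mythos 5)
Your argument is correct and coincides with the paper's: both proofs run the same chain $C_*^s \hookrightarrow H^{s-\delta} \xrightarrow{\Pi} H^{-s+\delta} \hookrightarrow C_*^{-s+\delta-(n+1)/2} \hookrightarrow C_*^{-s-(n+1)/2}$, using the $L^2$-Sobolev boundedness of $\Pi$ together with Besov/Sobolev embeddings on the compact $(n+1)$-dimensional manifold $SM$. You simply spell out the first embedding $B^s_{\infty,\infty}\hookrightarrow B^s_{2,\infty}\hookrightarrow B^{s-\delta}_{2,2}$ in more detail than the paper, which states it as a Sobolev embedding without elaboration.
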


\begin{proof}
Fix $\eps > 0$ small enough. Then:
\[
C_*^s \hookrightarrow H^{s-\eps} \overset{\Pi}{\rightarrow} H^{-s+\eps} \hookrightarrow C_*^{-s-(n+1)/2+\eps} \hookrightarrow C_*^{s-(n+1)/2},
\]
by Sobolev embeddings.
\end{proof}

We can now deduce from the previous work the stability estimate of Theorem \ref{theorem:xray}.

\begin{proof}[Proof of Theorem \ref{theorem:xray}]
We assume that $f \in C_{\sol}^\alpha(M, \otimes^m_S T^*M)$ is such that $\|f\|_{C^\alpha} \leq 1$. By Theorem \ref{theorem:livsic-approche}, we can write $\pi_m^*f = Xu+h$, with $u, Xu, h \in C^{\alpha'}$, where $0 < \alpha' < \alpha$ and $\|h\|_{C^{\alpha'}} \lesssim \|I_mf\|^\tau_{\ell^\infty}$.

We have:
\[
\begin{array}{lll}
\|f\|_{C_*^{-1-\alpha'-(n+1)/2}} & \lesssim \|\Pi_2f\|_{C_*^{-\alpha'-(n+1)/2}} & \text{ by Lemma \ref{lemma:stabilite-pim2}} \\
& \lesssim \|\Pi \pi_2^* f\|_{C_*^{-\alpha'-(n+1)/2}} & \\
& \lesssim \|\Pi (Xu+h)\|_{C_*^{-\alpha'-(n+1)/2}} & \\
& \lesssim \|\Pi h\|_{C_*^{-\alpha'-(n+1)/2}}& \\ 
& \lesssim \|h\|_{C^{\alpha'}} & \text{ by Lemma \ref{lemma:boundedness-pi}} \\
& \lesssim \|I_2f\|_{\ell^\infty}^{\tau} & \text{ by Theorem \ref{theorem:livsic-approche}}
\end{array}
\]
Using $\|f\|_{C^\alpha} \leq 1$ and interpolating $C^\beta$ between $C^{-1-\alpha'-(n+1)/2}$ and $C^{\alpha}$, one obtains the sought result.

\end{proof}

\bibliographystyle{alpha}
\bibliography{Biblio}

\begin{thebibliography}{dlLMM86}

\bibitem[BK85]{Burns-Katok-85}
K.~Burns and A.~Katok.
\newblock Manifolds with nonpositive curvature.
\newblock {\em Ergodic Theory Dynam. Systems}, 5(2):307--317, 1985.

\bibitem[CS98]{Croke-Sharafutdinov-98}
Christopher~B. Croke and Vladimir~A. Sharafutdinov.
\newblock Spectral rigidity of a compact negatively curved manifold.
\newblock {\em Topology}, 37(6):1265--1273, 1998.

\bibitem[dlLMM86]{DeLaLlave-Marco-Moryon-86}
R.~de~la Llave, J.~M. Marco, and R.~Moriy\'{o}n.
\newblock Canonical perturbation theory of {A}nosov systems and regularity
  results for the {L}iv\v{s}ic cohomology equation.
\newblock {\em Ann. of Math. (2)}, 123(3):537--611, 1986.

\bibitem[DS03]{Dairbekov-Sharafutdinov-03}
Nurlan~S. Dairbekov and Vladimir~A. Sharafutdinov.
\newblock Some problems of integral geometry on {A}nosov manifolds.
\newblock {\em Ergodic Theory Dynam. Systems}, 23(1):59--74, 2003.

\bibitem[DS10]{Dairbekov-Sharafutdinov-10}
N.~S. Dairbekov and V.~A. Sharafutdinov.
\newblock Conformal {K}illing symmetric tensor fields on {R}iemannian
  manifolds.
\newblock {\em Mat. Tr.}, 13(1):85--145, 2010.

\bibitem[FS11]{Faure-Sjostrand-11}
Fr\'{e}d\'{e}ric Faure and Johannes Sj\"{o}strand.
\newblock Upper bound on the density of {R}uelle resonances for {A}nosov flows.
\newblock {\em Comm. Math. Phys.}, 308(2):325--364, 2011.

\bibitem[GK80a]{Guillemin-Kazhdan-80}
V.~Guillemin and D.~Kazhdan.
\newblock Some inverse spectral results for negatively curved {$2$}-manifolds.
\newblock {\em Topology}, 19(3):301--312, 1980.

\bibitem[GK80b]{Guillemin-Kazhdan-80-2}
Victor Guillemin and David Kazhdan.
\newblock Some inverse spectral results for negatively curved {$n$}-manifolds.
\newblock In {\em Geometry of the {L}aplace operator ({P}roc. {S}ympos. {P}ure
  {M}ath., {U}niv. {H}awaii, {H}onolulu, {H}awaii, 1979)}, Proc. Sympos. Pure
  Math., XXXVI, pages 153--180. Amer. Math. Soc., Providence, R.I., 1980.

\bibitem[GL18]{Guillarmou-Lefeuvre-18}
C.~{Guillarmou} and T.~{Lefeuvre}.
\newblock {The marked length spectrum of Anosov manifolds}.
\newblock {\em ArXiv e-prints}, June 2018.

\bibitem[Gui17]{Guillarmou-17-1}
Colin Guillarmou.
\newblock Invariant distributions and {X}-ray transform for {A}nosov flows.
\newblock {\em J. Differential Geom.}, 105(2):177--208, 2017.

\bibitem[HF]{Fisher-Hasselblatt}
Boris Hasselblatt and Todd Fisher.
\newblock {\em Hyperbolic flows}.
\newblock In preparation, *.

\bibitem[Kat90]{Katok-90}
Svetlana Katok.
\newblock Approximate solutions of cohomological equations associated with some
  {A}nosov flows.
\newblock {\em Ergodic Theory Dynam. Systems}, 10(2):367--379, 1990.

\bibitem[KH95]{Hasselblatt-Katok-95}
Anatole Katok and Boris Hasselblatt.
\newblock {\em Introduction to the modern theory of dynamical systems},
  volume~54 of {\em Encyclopedia of Mathematics and its Applications}.
\newblock Cambridge University Press, Cambridge, 1995.
\newblock With a supplementary chapter by Katok and Leonardo Mendoza.

\bibitem[Kli74]{Klingenberg-74}
Wilhelm Klingenberg.
\newblock Riemannian manifolds with geodesic flow of {A}nosov type.
\newblock {\em Ann. of Math. (2)}, 99:1--13, 1974.

\bibitem[Liv72]{Livsic-72}
A.~N. Liv\v{s}ic.
\newblock Cohomology of dynamical systems.
\newblock {\em Izv. Akad. Nauk SSSR Ser. Mat.}, 36:1296--1320, 1972.

\bibitem[Liv04]{Liverani-04}
Carlangelo Liverani.
\newblock On contact {A}nosov flows.
\newblock {\em Ann. of Math. (2)}, 159(3):1275--1312, 2004.

\bibitem[LT05]{Lopes-Thieullen-05}
A.~O. Lopes and Ph. Thieullen.
\newblock Sub-actions for {A}nosov flows.
\newblock {\em Ergodic Theory Dynam. Systems}, 25(2):605--628, 2005.

\bibitem[Par88]{Parry-88}
William Parry.
\newblock Equilibrium states and weighted uniform distribution of closed
  orbits.
\newblock In {\em Dynamical systems ({C}ollege {P}ark, {MD}, 1986--87)}, volume
  1342 of {\em Lecture Notes in Math.}, pages 617--625. Springer, Berlin, 1988.

\bibitem[PSU14]{Paternain-Salo-Uhlmann-14-2}
Gabriel~P. Paternain, Mikko Salo, and Gunther Uhlmann.
\newblock Spectral rigidity and invariant distributions on {A}nosov surfaces.
\newblock {\em J. Differential Geom.}, 98(1):147--181, 2014.

\bibitem[Sha94]{Sharafutdinov-94}
V.~A. Sharafutdinov.
\newblock {\em Integral geometry of tensor fields}.
\newblock Inverse and Ill-posed Problems Series. VSP, Utrecht, 1994.

\bibitem[Shu01]{Shubin-01}
M.~A. Shubin.
\newblock {\em Pseudodifferential operators and spectral theory}.
\newblock Springer-Verlag, Berlin, second edition, 2001.
\newblock Translated from the 1978 Russian original by Stig I. Andersson.

\bibitem[SSU05]{Sharafutdinov-Skokan-Uhlmann-05}
Vladimir Sharafutdinov, Michal Skokan, and Gunther Uhlmann.
\newblock Regularity of ghosts in tensor tomography.
\newblock {\em J. Geom. Anal.}, 15(3):499--542, 2005.

\bibitem[Tay91]{Taylor-91}
Michael~E. Taylor.
\newblock {\em Pseudodifferential operators and nonlinear {PDE}}, volume 100 of
  {\em Progress in Mathematics}.
\newblock Birkh\"{a}user Boston, Inc., Boston, MA, 1991.

\bibitem[Zwo12]{Zworski-12}
Maciej Zworski.
\newblock {\em Semiclassical analysis}, volume 138 of {\em Graduate Studies in
  Mathematics}.
\newblock American Mathematical Society, Providence, RI, 2012.

\end{thebibliography}

\end{document}